\theoremstyle{plain}
\theoremstyle{definition}
\newtheorem{defn}{Definition}[section]
\newtheorem{lem}[defn]{Lemma}
\newtheorem{prop}[defn]{Proposition}
\newtheorem{cor}[defn]{Corollary}
\theoremstyle{remark}
\newtheorem{ex}[defn]{Example}
\newtheorem{rem}[defn]{Remark}
\newcommand{\com}[1]{}
\newcommand{\IN}{\mathbb{N}}
\newcommand{\IZ}{\mathbb{Z}}
\newcommand{\IR}{\mathbb{R}}
\newcommand{\IS}{\mathbb{S}}
\newcommand{\eps}{\varepsilon}
\DeclareMathOperator\diam{diam}
\DeclareMathOperator\dist{dist}
\DeclareMathOperator\MinCut{MinCut}
\DeclareMathOperator\Cut{Cut}
\DeclareMathOperator\UltConj{UltConj}
\DeclareMathOperator\conj{conj}
\DeclareMathOperator\MinRad{minrad}
\DeclareMathOperator\inj{inj}
\DeclareMathOperator\id{id}
\title{The Injectivity Radius of Souls of Alexandrov Spaces}
\author{Elena M\"ader-Baumdicker\footnote{maeder-baumdicker@mathematik.tu-darmstadt.de,\newline ORCID: 0000-0001-8125-8799,\newline Technical University Darmstadt, Department of Mathematics, Schlossgartenstr. 7, 64289 Darmstadt, Germany}, Jona Seidel\footnote{seidel@mathematik.tu-darmstadt.de,\newline ORCID: 0009-0000-1282-8209,\newline Technical University Darmstadt, Department of Mathematics, Schlossgartenstr. 7, 64289 Darmstadt, Germany }}
\date{\today}
\begin{document}
%\pagenumbering{roman}
\maketitle

\begin{abstract}
	A sharp lower bound for the injectivity radius in noncompact nonnegatively curved Riemannian manifolds involving their soul goes back to \v{S}a\-ra\-fut\-di\-nov. We generalize this bound to the setting of Alexandrov spaces. Our main theorem reads as follows. If the injectivity radius of an Alexandrov space of nonnegative curvature does not coincide with the one of its souls, then it is at least $ \pi K^{-1/2} $, where $ K $ is an upper curvature bound. We introduce the soul of Alexandrov spaces in some detail and compare two notions of injectivity radii.	\footnote{Mathematics Subject Classification \href{https://mathscinet.ams.org/mathscinet/search/mscdoc.html?code=53C20,53C23,53C45}{53C20; 53C23, 53C45},\newline Keywords: Alexandrov Geometry, Soul, Injectivity Radius, Conjugate Points
	%53C20: Global Riemannian geometry, including pinching; 53C23: Global geometric and topological methods (à la Gromov); 53C45: differential geometric analysis on metric spaces,Global surface theory (convex surfaces à la A. D. Aleksandrov) 
	}
\end{abstract}

\newpage
%%%%%%%%%%%%%%%%%%%%%%%%%%%%%%%%%%%%%%%%%%%%%%%%%%%%%%%%%%%%%%%%%%%
%\phantomsection\addcontentsline{toc}{section}{Acknowledgements}
\section*{Acknowledgments}
We are thankful to Alexander Lytchak for extensive discussions which were essential to our understanding and improved the exposition of this paper at various places. We are also thankful to Karsten Große-Brauckmann for his helpful comments and commitment to our fruitful discussions. Special thanks should also go to Fernando Galaz-García and Diego Corro who together with the first author suggested the formulation of Theorem~\ref{thm_injSoul} and shared helpful comments. The first author is partially supported by the Deutsche Forschungsgemeinschaft DFG (MA 7559/1-2). This work is part of the second author's PhD thesis.
\newpage
%%%%%%%%%%%%%%%%%%%%%%%%%%%%%%%%%%%%%%%%%%%%%%%%%%%%%%%%%%%%%%%%%%%
\tableofcontents

%%%%%%%%%%%%%%%%%%%%%%%%%%%%%%%%%%%%%%%%%%%%%%%%%%%%%%%%%%%%%%%%%%%
%\phantomsection\addcontentsline{toc}{section}{Introduction}
\section{Introduction}%\pagenumbering{arabic}
\com{I omitted the following historical remarks.
	History of AS:
	\begin{itemize}
		\item cite{Alex} and cite{Bus} and cite{Rin}.
		\item Don't overlook remarks by Plaut \href{https://mathscinet.ams.org/mathscinet/search/publdoc.html?arg3=&co4=AND&co5=AND&co6=AND&co7=AND&dr=all&pg4=AUCN&pg5=TI&pg6=PC&pg7=AUCN&pg8=ET&review_format=html&s4=Shiohama&s5=alexandrov&s6=&s7=&s8=All&sort=Newest&vfpref=html&yearRangeFirst=&yearRangeSecond=&yrop=eq&r=3&mx-pid=1320267}{in his review} on cite{Shio}
	\end{itemize}
}
An Alexandrov space is a complete length space, where curvature bounds are introduced by triangle comparison. This defines a wide class of spaces including complete Riemannian manifolds and their limits \cite{BGP,BN,BBI}\com{Lower curvature bound cit[10.7.1]{BBI}. An upper curvature bound is not necessarily stable under limits. Additional assumptions are required cit[11.3]{BN}. For both-sided curvature bounds, see cite[15.1]{BN}}. The metric toolbox of Alexandrov geometry is different to the usual apparatus of smooth differentiable structures and builds upon basic descriptions of geometry like distance and angle. Many results from Riemannian geometry have been generalized to Alexandrov spaces. In fact, some questions appear to be more naturally described in the metric vocabulary and lead to new results even for Riemannian manifolds, e.g.\ see \cite{AB_GaussEqInjRadSubspaces,AKP_Found}. 
%In this paper a generalization of a theorem for nonnegatively curved Riemannian manifolds is generalized to Alexandrov spaces which uses the concept of the \textit{soul}, a compact subspace without boundary that contains any (local) geodesic joining two of its points.

The concept of the soul was a critical contribution to the study of Riemannian manifolds of nonnegative curvature by Cheeger and Gromoll \cite{CG}. It is a compact submanifold without boundary that contains any (local) geodesic joining two of its points. The Soul Theorem \cite[Theorem~2.2]{CG} essentially reduces the study of complete non\-negatively curved Riemannian manifolds to the compact case. It states that these manifolds are diffeomorphic to the normal bundle over their soul.
The construction of the soul in \cite{CG} was generalized to Alexandrov spaces by Perelman \cite{P2}.
%A constructive proof of the existence of the soul was given in \cite{CG} and generalized to Alexandrov spaces in \cite{P2}. 
%A soul of a finite-dimensional Alexandrov space of nonnegative curvature is defined to be a compact, totally convex subset without boundary. 
We call a soul which can be obtained by this construction a Cheeger-Gromoll soul. 
It was also shown in \cite{P2} that the full Soul Theorem does not generalize to Alexandrov spaces (see Example~\ref{ex_soulTheoremCounterexample}).
Still, the soul serves as a key tool to investigate the structure of nonnegatively curved Alexandrov spaces \cite{W,Li,HS_OrientAS,Spind,GPMaxRad}. For the convenience of the reader we include a section summarizing the construction of the soul in \cite{P2}. Moreover, we state a proof for the uniqueness of Cheeger-Gromoll souls up to isometry.% A proof for the latter in Riemannian manifolds was given in \cite{SConv} and carries over to Alexandrov spaces.

We show that, while the Soul Theorem does not generalize to Alexandrov spaces, a theorem by \v{S}arafutdinov \cite{Sh76} does.
It establishes a lower bound for the injectivity radius $ \inj(M) $ of a Riemannian manifold $ M $ in terms of an upper curvature bound and the injectivity radius of its soul. In Riemannian manifolds, each pair of points at a distance less than $ \inj(M) $ are uniquely joined by a geodesic, and each geodesic of length less than $ \inj(M) $ is minimizing. These properties provide two natural notions of an injectivity radius in Alexandrov spaces, cf.\ Section~\ref{sec_Inj}. Lower bounds on the injectivity radius are significant as they control the volume growth of balls \cite{CK}, retain curvature bounds under limits \cite[Prop.~11.3]{BN}, prevent collapsing or yield topological implications such as sphere theorems \cite[6.5]{Panorama}.

An alternative proof to \v{S}arafutdinov's theorem 
%\cite{Sh76}
was given by Croke and Karcher \cite[Theorem I]{CK}. We adapt it to the setting of Alexandrov spaces and obtain the following as our main result.
\begin{restatable}{thm}{injSoul}\label{thm_injSoul}
	Let $ K> 0 $ and $ X $ be a finite-dimensional Alexandrov space of non\-negative curvature bounded above by $ K $ and $ S\subseteq X$ be a Cheeger-Gromoll soul. If $ \inj(X)<\frac{\pi}{\sqrt{K}} $, then 
	\begin{equation*}
		\inj(X) = \inj(S).
	\end{equation*}
\end{restatable}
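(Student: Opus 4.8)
The plan is to adapt Croke and Karcher's argument, reducing the theorem to the fact that a shortest closed geodesic of $X$ can be pushed into a Cheeger--Gromoll soul. We first record the general inequality $\inj(S)\ge\inj(X)$, which uses only that a Cheeger--Gromoll soul $S$ is totally convex in $X$: then the intrinsic metric of $S$ is the restriction of $d_X$, and the locally minimizing geodesics of $S$ are exactly those locally minimizing geodesics of $X$ that happen to lie in $S$. Hence every geodesic loop of $S$ is a geodesic loop of $X$, and two distinct minimizing geodesics of $S$ between points at distance $<\inj(X)$ would be two distinct minimizing geodesics of $X$; so, for either of the two injectivity radii compared in Section~\ref{sec_Inj}, the one of $S$ is at least the one of $X$. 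It remains to prove the reverse inequality $\inj(X)\ge\inj(S)$ under the hypothesis $\inj(X)<\frac{\pi}{\sqrt K}$.

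Since $X$ has curvature bounded above by $K$ it is locally $\mathrm{CAT}(K)$, so geodesics are locally unique and there is no ``conjugate-type'' failure of injectivity below the scale $\frac{\pi}{\sqrt K}$; thus the hypothesis $\inj(X)<\frac{\pi}{\sqrt K}$ forces $\inj(X)$ to be realized by a short geodesic loop. A Klingenberg-type argument in the Alexandrov setting --- resting on the local $\mathrm{CAT}(K)$ structure and the comparison of the two injectivity radii --- should then produce a genuine \emph{closed} geodesic $\gamma$ of $X$ with $\operatorname{length}(\gamma)=2\inj(X)$. I expect this to be the main obstacle: one must show that the infimum defining $\inj(X)$ is attained (the soul is compact, but $X$ is not) and that the loop realizing it closes up without a corner --- the latter being, in the smooth case, exactly Klingenberg's lemma, whose proof uses second variation and has to be replaced here by a metric comparison argument.

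Next, push $\gamma$ into a soul. It is classical that any closed geodesic of a nonnegatively curved space is contained in a soul, and the proof transfers to the Alexandrov setting: the Busemann function $b_\sigma$ of any ray $\sigma$ is concave, so $b_\sigma\circ\gamma$ is concave on the circle $\IR/2\inj(X)\IZ$ and hence constant, and likewise the concave distance-to-boundary function $d(\cdot,\partial C)$ on each totally convex set $C$ arising in Perelman's construction restricts to a constant on $\gamma$; running the construction from the basepoint $\gamma(0)$ and iterating (which terminates after finitely many steps, $X$ being finite-dimensional) keeps $\gamma$ inside every set of the chain, so $\gamma\subseteq S'$ for the resulting Cheeger--Gromoll soul $S'$. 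By the uniqueness of Cheeger--Gromoll souls up to isometry, proved earlier in the paper, $\inj(S')=\inj(S)$.

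Finally, $\gamma$ is a closed geodesic of the Alexandrov space $S'$ of length $2\inj(X)$. Cutting $\gamma$ at the pair of points $\gamma(0)$ and $\gamma(\inj(X))$ gives $\inj(S')\le\inj(X)$ for both notions of Section~\ref{sec_Inj}: either these points are joined in $S'$ by the two distinct minimizing half-arcs of $\gamma$, or one of the half-arcs is a non-minimizing geodesic of length $\inj(X)$; in either case an injectivity radius at most $\inj(X)$ is witnessed at $\gamma(0)$. Combining $\inj(S)=\inj(S')\le\inj(X)$ with the first paragraph's $\inj(S)\ge\inj(X)$ yields $\inj(X)=\inj(S)$.
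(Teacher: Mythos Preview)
Your easy direction $\inj(S)\ge\inj(X)$ via total convexity is fine, and you correctly identify that Klingenberg's lemma in this setting only hands you a geodesic loop, with both the attainment of the infimum on a noncompact $X$ and the smoothness of the loop at its basepoint left open. But the decisive gap is in the next step.

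The claim that ``running the construction from the basepoint $\gamma(0)$ keeps $\gamma$ inside every set of the chain'' is false. Concavity of $b$ (and of $\dist_{\partial C}$) composed with a closed geodesic only tells you that the composition is \emph{constant}; it does not tell you that this constant equals the \emph{maximum} value. The soul construction passes at each stage to the maximum level set $C_a$, not to the level set containing $\gamma$. Concretely, take the flat half-cylinder $X=\IS^1\times[0,\infty)$ (which satisfies the hypotheses for any $K>0$ small enough that $L/2<\pi/\sqrt{K}$, where $L$ is the circumference). From any basepoint $(x_0,z_0)$ the only ray is the vertical one going up, so $b(x,z)=z_0-z$ and the Cheeger--Gromoll soul is always the bottom circle $\IS^1\times\{0\}$. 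The closed geodesic $\IS^1\times\{z_0\}$ with $z_0>0$ realizes $2\inj(X)$ but is \emph{not} contained in the soul, even though $b$ restricted to it is constant. (It happens to be \emph{a} soul, but not a Cheeger--Gromoll soul, and the uniqueness statement you invoke is only proved for Cheeger--Gromoll souls.) The same phenomenon persists without boundary: cap the half-cylinder with a hemisphere and the unique Cheeger--Gromoll soul becomes the south pole, while closed geodesics sit on the cylindrical part.

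The paper's proof sidesteps both your acknowledged obstacle and this one simultaneously: rather than locating a single loop, it tracks the function $g(t)=\inf\{\inj_{X_l}(p):p\in C_t\}$ along the entire filtration $(C_t)_{t\in I}$, shows via first variation (Lemma~\ref{lem_gLeftDerivativeZero}) that $g^-(t)=0$ whenever $g(t)<D_K$, establishes continuity of $\min\{D_K,g\}$ (Lemma~\ref{lem_gClosed}), and concludes by a mean-value argument that $g$ is constant. No global minimum of $\inj$ is needed, and the loop produced by Klingenberg is used only locally at each $C_t$. The alternative route sketched in Remark~\ref{rem_alternativeProof} is closer in spirit to yours but replaces ``$\gamma$ lies in a soul'' by ``\v{S}arafutdinov's retraction pushes $\gamma$ non-expandingly into the soul,'' then finishes with curve shortening and a lower length bound for null-homotopies from \cite{ShSo}.
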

The assumption $ \inj(X)< \frac{\pi}{\sqrt{K}} $ in Theorem \ref{thm_injSoul} is necessary. The injectivity radius of the paraboloid is finite. However, for its soul, a singleton, it is infinite.
\begin{rem}
	Note that, as a consequence of Theorem~\ref{thm_injSoul} we obtain the lower bound on the injectivity radius
	\begin{equation*}
		\inj(X)\geq \min \left\{\frac{\pi}{\sqrt{K}},\ \inj(S)\right\}.
	\end{equation*}
	In this form the theorem is stated in \cite{Sh76,CK}. However, the slightly stronger formulation in Theorem~\ref{thm_injSoul} also follows from their proofs.
\end{rem}
Notably, Theorem~\ref{thm_injSoul} can reduce the study of the injectivity radius to compact spaces without boundary, a case well studied in the Riemannian setting, see \cite[6.5.2]{Panorama}. In addition, further information on the soul can give a lower bound independent of quantities such as dimension, volume or diameter. The following corollary is an example of that.
It is known that in the case of positive curvature (see Definition~\ref{def_posCurv}), the soul is a single point unless it coincides with the entire space (see Proposition~\ref{prop_soulPoint}). A space consisting of a single point has infinite injectivity radius, and so we conclude:
\begin{restatable}{cor}{injSoulPos}\label{cor_injSoulPos}
	Let $ K>0 $ and $ X $ be a finite-dimensional Alexandrov space of positive curvature bounded above by $ K $. If $ X $ is non\-compact or has non\-empty boundary, then
	$$ \inj(X)\geq \frac{\pi}{\sqrt{K}}. $$
\end{restatable}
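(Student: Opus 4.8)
The plan is to derive Corollary~\ref{cor_injSoulPos} directly from Theorem~\ref{thm_injSoul} together with Proposition~\ref{prop_soulPoint}, with essentially no extra work. First I would fix a Cheeger-Gromoll soul $S\subseteq X$ (which exists by Perelman's construction), and recall that such a soul is compact and has empty boundary. Since $X$ is assumed to be noncompact or to have nonempty boundary, $S$ cannot coincide with $X$: in the noncompact case this is immediate from the compactness of $S$, and when $\partial X\neq\emptyset$ it follows because $\partial S=\emptyset$.

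Next, since $X$ has positive curvature and $S\neq X$, Proposition~\ref{prop_soulPoint} shows that $S$ is a single point. A one-point space has infinite injectivity radius under either of the notions discussed in Section~\ref{sec_Inj}, so $\inj(S)=\infty$.

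Finally I would argue by contradiction. Positive curvature is in particular nonnegative curvature, still bounded above by $K$, so Theorem~\ref{thm_injSoul} applies to $X$. If $\inj(X)<\frac{\pi}{\sqrt{K}}$ held, the theorem would force $\inj(X)=\inj(S)=\infty$, contradicting $\inj(X)<\frac{\pi}{\sqrt{K}}<\infty$. Hence $\inj(X)\geq\frac{\pi}{\sqrt{K}}$, as claimed.

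The only point that needs any care --- and it is the mild ``main obstacle'' here --- is checking that the hypotheses of Theorem~\ref{thm_injSoul} and Proposition~\ref{prop_soulPoint} are genuinely in force: that a positively curved space is nonnegatively curved with the same upper bound $K$, and that the condition ``noncompact or with nonempty boundary'' is exactly what rules out $S=X$ and thereby triggers Proposition~\ref{prop_soulPoint}. Everything else is a formal chaining of the cited results.
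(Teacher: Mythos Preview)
Your proof is correct and matches the paper's approach: the paper derives Corollary~\ref{cor_injSoulPos} from Theorem~\ref{thm_injSoul} together with Proposition~\ref{prop_soulPoint}, noting that a singleton soul has infinite injectivity radius. One minor simplification: the hypothesis of Proposition~\ref{prop_soulPoint} is already literally ``noncompact or nonempty boundary'', so you can invoke it directly without the intermediate step of arguing $S\neq X$.
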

This contrasts to the case of compact spaces without boundary where curvature bounds are not sufficient to establish a lower bound for the injectivity radius \cite[6.5.2]{Panorama}.\com{Gather examples of compact positively curved Riemannian manifolds without boundary and arbitrary low injectivity radius (see hint at \cite[p.~277~bottom]{Panorama})}
\begin{rem}
	In Riemannian manifolds a single point with positive sectional curvatures suffices to guarantee a singleton soul. This is the Soul Conjecture which was proposed in \cite{CG} and famously proved in \cite{P_Soul}. In Alexandrov spaces this is still open, with some recent progress \cite{W,Li,RW}. In Alexandrov spaces where the Soul Conjecture continues to hold, it allows for the weaker curvature assumption in Corollary~\ref{cor_injSoulPos}.
	
	%In fact, any finite-dimensional nonnegatively curved Alexandrov space (not necessarily of positive curvature) with a singleton Cheeger-Gromoll soul qualifies for the application of Corollary~\ref{cor_injSoulPos}. In Riemannian manifolds, in fact, a single point with positive sectional curvatures suffices to guarantee a singleton soul. This is the Soul Conjecture which was proposed in \cite{CG} and famously proved in \cite{P_Soul}. In Alexandrov spaces this is still an open question and some progress has been made recently \cite{W,Li,RW}. 
\end{rem}
\begin{rem}
Although we work in the setting of Alexandrov spaces, the assumptions of Theorem~\ref{thm_injSoul} yield more regularity than one might expect. Two-sided curvature bounds imply that $ X $ and $ S $ are topological manifolds, possibly with boundary \cite{Pl92Boundary}. Moreover, $ S $ has empty Alexandrov boundary (see Definition~\ref{def_boundary}). Therefore, $ S $ has empty boundary in the manifold sense (Corollary~\ref{cor_boundaryManifold}) and hence satisfies local extendability of geodesics \cite[ch.\ II, 5.12]{BH}. This qualifies $ S $ for the theory of \cite{Ni_Overview,BN} which yields that $ S $ is in fact a Riemannian manifold, although not necessarily smooth \cite[14.1]{BN}.
%differentiable manifold with $ C^{3,\alpha} $ coordinates and $ W^{2,p} $ Riemannian metric, $ \alpha\in(0,1),\ 1\leq p<\infty $ \cite[14.1]{BN}. 
Furthermore, it can be approximated by smooth Riemannian manifolds \cite[15.1]{BN}. However, in the present paper we do not use the Riemannian metric or any of these results. Instead, the metric tools of Alexandrov spaces suffice to conclude Theorem~\ref{thm_injSoul}.
\end{rem}

The role of the curvature bounds in Theorem~\ref{thm_injSoul} can be seen as follows. Nonnegative curvature provides the existence of the soul and thereby controls the length of closed geodesics. On the other hand, the upper curvature bound controls the distance to conjugate points. These two quantities determine the injectivity radius, see Lemma~\ref{lem_Klingenberg}.
%It is known that the injectivity radius is controlled by the length of closed geodesics and the distance to conjugate points. Nonnegative curvature provides the existence of the soul and therewith control over the former and the upper curvature bound controls the latter.

The paper is organized as follows. After introducing basic facts  in Alexandrov geometry in Section~\ref{sec_Preliminaries}, we present in Section~\ref{sec_Soul} Perelman's existence proof of the soul and gather some properties of the soul construction. In Section~\ref{sec_Inj}, we introduce two definitions of an injectivity radius in Alexandrov spaces and treat conjugate points. These are the tools necessary to prove Theorem~\ref{thm_injSoul} in Section~\ref{sec_InjSoul}. Finally, Appendix~\ref{sec_App} collects elementary proofs and an addendum to \cite[Theorem~8.3]{ShSo}.

%%%%%%%%%%%%%%%%%%%%%%%%%%%%%%%%%%%%%%%%%%%%%%%%%%%%%%%%%%%%%%%%%%%
\section{Preliminaries}\label{sec_Preliminaries}
%%%%%%%%%%%%%%%%%%%%%%%%%%%%%%%%%%%%%%%%%%%%%%%%%%%%%%%%%%%%%%%%%%%
In this section we fix notation and recall basic properties of Alexandrov spaces. We mainly refer to \cite{BH,BBI,AKP_Found}. Other excellent introductions include \cite{BGP,Shio,Pl02}. 

In the following $ \kappa,K\in\IR $ denote real numbers and $ n $ a natural number. If not specified otherwise, $ d $ denotes the metric of a metric space $ X $. We denote an open ball of radius $ r\in\IR $ at $ p\in X $ by $ B_r(p) $. By \textit{reversal} of a curve $ c:[a,b]\to X $, we mean the curve $ -c:[a,b]\to X,\ t\mapsto c(a+b-t) $.
\begin{defn}
	Let $ X $ be a metric space.
	\begin{enumerate}[label=(\roman*)]
		\item A \textit{length space} is a metric space where the distance between two points is given by the infimum over lengths of all rectifiable curves joining those two points. 
		\item A curve $ c:[a,b]\to X $ is called a \textit{(local) geodesic} if it is a linear reparametrization of a (local) isometry. If it is a (local) isometry, then it is called a \textit{unit-speed (local) geodesic}. A curve $ c:[0,\infty)\to X $ is called a (unit-speed) \textit{ray} if each restriction to a compact interval is a nontrivial (unit-speed) geodesic.
		\item $ X $ is called \textit{geodesic} if for each pair of points there exists a geodesic joining them.
		\item A subset $ C\subseteq X $ is called \textit{convex} if any two points can be joint by a geodesic that lies entirely in $ C $.
	\end{enumerate}
\end{defn} 
Note that our local geodesics correspond to some authors notion of ``geodesics'' and our geodesics to ``minimizing geodesics''. While others require (local) geodesics to be parametrized by arc length (unit-speed), the above definition allows for parametrizations proportional to arc length.

Curvature bounds can be introduced in a length space $ X $ via comparison of triangles in $ X $ to triangles of the same side lengths in the simply connected $ 2 $-dimensional Riemannian manifolds $ M_K $ of constant curvature $ K $.
We denote its distance metric by $ d^K $ and set $ D_K $ to be its diameter, namely
\begin{equation*}
	D_K:=\begin{cases}
		\infty \quad &\text{ if } K\leq 0,\\
		\frac{\pi}{\sqrt{K}} &\text{ if } K>0 .
	\end{cases}
\end{equation*}
Triangles in $ M_K $ are well understood and described by the law of cosines \cite[Section~1.A]{AKP_Found}. For a choice of three pairwise distinct points $ p,q,r\in X $ satisfying $ d(p,q)+d(q,r)+d(r,p)<2D_K $ we denote by $ \tilde{p},\tilde{q},\tilde{r}\in M_K $ a choice of points such that each pair is the same distance apart as their counterparts in $ X $. By $ \tilde{\Delta}^K  $ we denote the triangle with vertices $ \tilde{p},\ \tilde{q}$ and $\tilde{r} $ and $ \tilde{\angle}^K_p(q,r) $ denotes the angle of $ \tilde{\Delta}^K $ in $ \tilde{p} $.
\begin{defn}\label{def_Curv}
	Let $ X $ be a geodesic space.
	$ X $ is a CAT($ K $) space (resp.\ CBB($ K $) space) if it satisfies the following for all pairwise distinct points $ p,x,y\in X $ with $ d(p,x)+d(x,y)+d(y,p) < 2D_K $.
	\begin{itemize}
		\item Let $ z $ be a point on a geodesic joining $ x $ to $ y $. The point $ \tilde{z}\in M_K $ with $ \tilde{d}^K(\tilde{z},\tilde{x}) = d(z,x) $ and $ \tilde{d}^K(\tilde{z},\tilde{y}) = d(z,y) $ satisfies
		\begin{equation*}
			d(p,z)\leq \tilde{d}^K(\tilde{p},\tilde{z}) \quad (\text{resp.\ } d(p,z)\geq \tilde{d}^K(\tilde{p},\tilde{z})).
		\end{equation*}
	\end{itemize}
	We say that a metric space is \textit{of curvature} $ \leq K $ (resp.\ $ \geq K $) if each point admits a neighborhood which is a CAT($ K $) space (resp.\ CBB($ K $) space).
\end{defn}
See \cite{AKP_Found} for equivalent and more general definitions.
\begin{defn}
	An \textit{Alexandrov space} is a complete length space that admits a curvature bound in the sense of Definition~\ref{def_Curv}.
\end{defn}
The notion of an Alexandrov space is not consistent within literature and the different definitions differ in whether they require completeness, local compactness, or a lower curvature bound.
Note that by the Globalization Theorem \cite[8.31]{AKP_Found} a geodesic Alexandrov space of curvature $ \geq K $ is also a CBB($ K $) space. That is, the local triangle comparison holds in fact globally. However, an analogous statement for upper curvature bounds is not true.% The cylinder $ \IS^1\times \IR \subseteq \IR^3 $, for instance, is of curvature $ \leq 0 $ but not a CAT($ 0 $) space since a noncontractible triangle lying in a closed geodesic does not satisfy triangle comparison.

Alexandrov spaces of curvature $ \geq K $ (resp.\ $ \leq K $) generalize smooth complete connected Riemannian manifolds without boundary with sectional curvatures $ \geq K $ (resp.\ $ \leq K $) (cf.\ \cite[Theorem~ 12.2.2 + Exercise~12.8.4]{Pet} and \cite[ch.\ II, 1A.6]{BH}, respectively).\com{See cite{ABBc} for the case of nonempty boundary and upper curvature bound} When referring to Riemannian manifolds in the following, we always mean smooth complete connected Riemannian manifolds without boundary.
A natural generalization of strict inequalities for sectional curvatures in Riemannian manifolds is the following.
\begin{defn}\label{def_posCurv}
	Let $ \kappa\in\IR $ and $ X $ be a metric space such that for each point $ p $ there exists a constant $ \kappa_p>\kappa $ such that a neighborhood at $ p $ is of curvature $ \geq \kappa_p $. Then we call $ X $ of curvature $ >\kappa $.
\end{defn}
\com{This implies 4-point-comparison with strict $ < $. But it is not equivalent. Consider a surface with a single point of minimal curvature (e.g.\ rotation of $ x^4 $). Then all triangles ``experience also higher curvature'' and satisfy 4-point comparison always with strict inequality}
A proper Alexandrov space $ X $ of curvature $ \geq \kappa >0 $ is compact if it is not isometric to the real (half\=/)line (Lemma~\ref{lem_factsAS}~\ref{lem_diameterAS}+\ref{lem_OneDimAS}). Therefore, except for the real (half\=/)line, a locally compact Alexandrov space of positive curvature is of curvature $ \geq \kappa>0 $ if and only if it is compact.

\begin{defn}
	Given two nontrivial geodesics $ \gamma,\sigma $ in a metric space $ X $ issuing in a common point $ p\in X $, we define the \textit{angle}
	\begin{equation*}
		\angle_p(\gamma,\sigma):=\lim_{s,t\searrow 0} \tilde{\angle}^0_p(\gamma(s),\sigma(t))
	\end{equation*}
	if the limit exists. We define $ \Sigma'_p $ to be the set of all nontrivial unit-speed geodesics issuing in $ p $, pairwise identified if they have an angle of zero. The angle $ \angle_p $ defines a metric on $ \Sigma'_p $. The \textit{space of directions} $ \Sigma_p $ is the completion of $ \Sigma'_p $.
	
\end{defn}

We gather standard facts about Alexandrov spaces.
\begin{lem}\label{lem_factsAS}
	A complete, locally compact length space $ X $ has the following properties.
	\begin{enumerate}[label=(\roman*)]
		\item \label{lem_hopfRinow}\cite[ch.\ I, 3.7]{BH}. $ X $ is proper and geodesic.
		\item \label{lem_geodLimit}\cite[ch.\ I, 3.11]{BH}. A sequence of geodesics $ c_n:[0,1]\to X $ contained in a bounded ball admits a convergent subsequence that converges uniformly to a (possibly constant) geodesic $ c:[0,1]\to X $.
		\item If $ X $ is of curvature $ \geq \kappa $ (resp.\ $ \leq K $) and $ A \subseteq X $ is a closed and convex subset, then $ A $ is an Alexandrov space of curvature $ \geq \kappa $ (resp.\ $ \leq K $).
		\item \label{lem_angleMonotonicity}\cite[8.14+9.14]{AKP_Found}. If $ X $ is of curvature $ \geq \kappa $ (resp.\ $ \leq \kappa $) and $ \gamma,\sigma $ are nontrivial geodesics issuing in $ p\in X $, then the map $ (s,t)\mapsto \tilde{\angle}^\kappa_p(\gamma(s),\sigma(t)) $ is nonincreasing (resp.\ nondecreasing) in both arguments. In particular, all angles exist.
		\item \label{lem_lengthContLocGeod}\cite[9.45]{AKP_Found}. If $ X $ is of curvature $ \leq K $ and $ (\gamma_n:[0,1]\to X)_{n\in\IN} $ is a sequence of local geodesics which converge uniformly to some curve $ \gamma:[0,1]\to X $, then $ \gamma $ is a local geodesic and $ \lim\limits_{n\to\infty} L(\gamma_n) = L(\gamma). $
		\item \label{lem_locMinimizing}\cite[ch.\ II, 1.4]{BH}. If $ X $ is CAT($ K $), then any local geodesic of length $ <D_K $ is geodesic and each pair of points $ x,y\in X $ with $ d(x,y)<D_K $ is joined by a unique geodesic. This geodesic varies continuously with its endpoints.
		\item \label{lem_adjAngle}\cite[8.39]{AKP_Found}. If $ X $ is of curvature bounded below, $ c:[0,1]\to X $ is a geodesic joining distinct points $ x,y\in X $ and $ \gamma $ is a geodesic joining an inner point $ p\in c((0,1)) $ to some point $ q\in X\setminus \{p\}  $, then 
		\[\angle_p(\gamma',x') + \angle_p(\gamma',y')=\pi,\]
		where $ x',y',\gamma'\in\Sigma_p $ are the directions corresponding to $ -c,\ c $ and $ \gamma $, respectively.
		\item \label{lem_NonBranchingGeod}\cite[8.37]{AKP_Found}. If $ X $ is of curvature bounded below and two geodesics issuing in a common point coincide on an open interval, then one is an extension of the other.
		
		\item \label{lem_diameterAS}\cite[8.44]{AKP_Found}. If $ X $ is of curvature $ \geq \kappa $ and not one-dimensional, then $ \diam X \leq D_\kappa $.
		\item \label{lem_OneDimAS}\cite[15.18]{AKP_Found}. If $ X $ is one-dimensional and of curvature bounded below, then it is isometric to a connected complete one-dimensional Riemannian manifold with possibly nonempty boundary. 
	\end{enumerate}
\end{lem}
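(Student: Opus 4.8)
Most of the parts of the lemma are exactly the results cited alongside them (and our conventions agree with those of the cited sources, up to the harmless freedom of affine reparametrization of geodesics discussed above), so I would only need to prove part~(iii); throughout, $A$ carries the metric restricted from $X$.

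The first step is to settle the metric properties of $A$. Since $X$ is proper by part~\ref{lem_hopfRinow}, its closed subset $A$ is proper, hence complete and locally compact. Convexity of $A$ provides, for all $x,y\in A$, a geodesic of $X$ from $x$ to $y$ lying in $A$; hence, with the restricted metric, $A$ is a geodesic length space, and a curve in $A$ is a geodesic of $A$ if and only if it is a geodesic of $X$ with endpoints in $A$ (a curve in $A$ from $x$ to $y$ is shortest in $A$ precisely when its length equals $d(x,y)$, i.e.\ precisely when it is shortest in $X$). The point of this is that the triangle comparison of Definition~\ref{def_Curv} now transfers between $A$ and $X$ verbatim: a triangle of $A$, a point on one of its sides, and every distance appearing in the comparison inequality are simultaneously a triangle of $X$, a point on a side of it, and the same distance in $X$.

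For the lower bound I would invoke the Globalization Theorem \cite[8.31]{AKP_Found}: the complete, locally compact, geodesic length space $X$ of curvature $\geq\kappa$ is a \textup{CBB}($\kappa$) space, so the comparison inequality of Definition~\ref{def_Curv} holds for all admissible triples in $X$. By the previous step it then holds in $A$, so $A$ is \textup{CBB}($\kappa$), in particular of curvature $\geq\kappa$; being a complete length space, $A$ is an Alexandrov space of curvature $\geq\kappa$.

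The upper bound is the only delicate point, since curvature $\leq K$ is a purely local property and no globalization is available. Fix $p\in A$ and a neighborhood $U\subseteq X$ of $p$ that is a \textup{CAT}($K$) space, and choose $\rho>0$ with $3\rho<D_K$ and $B_{3\rho}(p)\subseteq U$; set $W:=B_\rho(p)\cap A$. For $x,y\in W$, a geodesic of $A$ joining them has length $d(x,y)<2\rho$ and issues in $B_\rho(p)$, hence stays inside $B_{3\rho}(p)\subseteq U$; being a geodesic of $X$ of length $<D_K$ contained in $U$, it is the unique geodesic of $U$ between $x$ and $y$ by part~\ref{lem_locMinimizing}, and, using the classical fact that balls of radius $<D_K/2$ in a \textup{CAT}($K$) space are convex, it is even contained in $B_\rho(p)$, hence in $W$. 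Therefore $W$ is a geodesic space whose geodesics are geodesics of $U$, so the \textup{CAT}($K$) comparison holds in $W$ because it holds in $U$ and all distances agree. Since $W$ is a neighborhood of $p$ in $A$ and $A$ is a complete length space, $A$ is an Alexandrov space of curvature $\leq K$. I expect this localization step---producing a neighborhood of $p$ inside $A$ that is itself a geodesic space satisfying comparison---to be the only real obstacle; it is what forces the somewhat technical choice of $\rho$ and the use of convexity of small balls in \textup{CAT}($K$) spaces.
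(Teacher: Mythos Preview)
Your proposal is correct. The paper does not supply a proof for this lemma at all; it is presented as a list of standard facts with citations, and part~(iii) is stated without reference as an elementary observation. Your decision to prove only~(iii) is therefore exactly right, and your argument is sound: the transfer of the lower bound via Globalization is immediate, and for the upper bound your localization using a small ball inside a CAT($K$) neighborhood together with convexity of balls of radius $<D_K/2$ is the standard way to ensure the neighborhood $W=B_\rho(p)\cap A$ is itself geodesic. One could quibble about whether the CAT($K$) neighborhood $U$ carries the restricted or the induced length metric, but since $B_{3\rho}(p)\subseteq U$ and $X$-geodesics between points of $B_\rho(p)$ lie in $U$, the two metrics agree on $B_\rho(p)$ and the balls coincide, so your use of ball convexity goes through either way.
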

\begin{rem}
	Note that, other than for spaces of curvature $ \leq K $, for CAT($ K $) spaces Corollary~\ref{cor_injSoulPos} even holds without a lower curvature bound and is an immediate consequence of the CAT($ K $) property Lemma~\ref{lem_factsAS}~\ref{lem_locMinimizing}.\com{This is the special case of the result \cite[1.3]{AB_GaussEqInjRadSubspaces} where a lower bound for the injectivity radius of subspaces of extrinsic curvature bounded above was found.}
\end{rem}
The last statements refer to the dimension of an Alexandrov space which we introduce now.
\begin{defn}
	The \textit{dimension} of an Alexandrov space is defined to be its Hausdorff dimension.
\end{defn}
Notably, every finite-dimensional Alexandrov space is locally compact and the statements in Lemma~\ref{lem_factsAS} apply. The dimension of an Alexandrov space is an integer (or infinite). Furthermore, it agrees with the so-called strainer number which gives the construction of the distance charts, cf.\ \cite[ch.~10]{BBI}. We note a rather specific but simple observation of this theory for later use.
\begin{rem}\label{rem_dimOpen}
	Let $ X $ be an $ n $-dimensional Alexandrov space of curvature $ \geq \kappa $ and $ A\subseteq X $ be a closed and convex subset of dimension $ n $. Then there exists a point $ p\in A $ and a neighborhood in $ X $ of $ p $ which is contained in $ A $.
	
	This is a consequence of the distance coordinates in the following way. Since $ A $ is $ n $-dimensional, it contains an $ n $-strained point $ p $ which is clearly also an $ n $-strained point in $ X $. There exists the distance coordinate map $ f $ which is a homeomorphism from an open neighborhood $ U\subseteq X $ of $ p $ to some subset of $ \IR^n $ (see \cite[10.8.18]{BBI}). To prove that the distance coordinates map surjectively onto an open subset (see \cite[10.8.15]{BBI}) it suffices to consider points along geodesics from $ p $ to the strainer points $ (a_i,b_i) $ and their limits. Since $ A $ is convex and closed, they are contained in $ A $ and hence $ f|_{U\cap A} $ is already surjective onto $ f(U) $. But since $ f $ is injective, this implies that $ U\cap A = U $, that is, $ U $ is contained in $ A $.
\end{rem}
\begin{defn}\label{def_boundary}
	The \textit{boundary} of an $ n $-dimensional Alexandrov space of curvature bounded below is inductively defined to be the set of points $ p\in X $ where the space of directions $ \Sigma_p $, which is an Alexandrov space of curvature $ \geq 1 $ and dimension $ n-1 $ (\cite[7.10+7.11]{BGP}), has nonempty boundary. One-dimensional Alexandrov spaces are one-dimensional manifolds (Lemma~\ref{lem_factsAS}~\ref{lem_OneDimAS}) and their boundary is defined to be their topological boundary. Zero-dimensional Alexandrov spaces are singleton sets and have, by definition, empty boundary.
\end{defn}
The following statement is extracted from \cite{Mitsu} and reveals that the Alexandrov boundary behaves similarly to the manifold boundary.
\begin{lem}\label{lem_homology}
	Let $ X $ be an $ n $-dimensional Alexandrov space of curvature bounded below and $ U\subseteq X $ be a nonempty, open and connected subset. Then the $ n $-th singular homology group $ H_n(U;\IZ/2\IZ) $ in $ \IZ/2\IZ $ coordinates is nontrivial if and only if $ X $ is compact with empty boundary and $ U=X $.\com{If it is nontrivial it is equal to $ \IZ/2\IZ $. Probably easy to deduce from \cite{Mitsu} with algebraic topology. I did not check.}
\end{lem}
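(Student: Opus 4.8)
The plan is to deduce this from \cite{Mitsu} together with a doubling argument and a short Mayer--Vietoris computation. From \cite{Mitsu} I want to extract two facts. First, that an $n$-dimensional Alexandrov space $Y$ of curvature bounded below with empty boundary is $\IZ/2\IZ$-oriented, and consequently that every open \emph{connected} $V\subseteq Y$ satisfies $H_n(V;\IZ/2\IZ)\neq 0$ precisely when $V$ is compact (in which case the group is $\IZ/2\IZ$); this is the top-degree manifestation of $\IZ/2\IZ$-Poincar\'e duality, which, unlike full duality, remains valid although such $Y$ need not be a homology manifold. Second, that the Alexandrov boundary is detected by top local homology: $H_n(X,X\setminus\{p\};\IZ/2\IZ)$ is $\IZ/2\IZ$ for $p\notin\partial X$ and vanishes for $p\in\partial X$. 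Beyond these I use that $X$ is connected (being a length space), hence so is $U$; that finite-dimensional Alexandrov spaces are locally compact, locally contractible and finite-dimensional, so that their singular homology vanishes above the dimension; and that $\partial X$ is an $(n-1)$-dimensional Alexandrov space, so that $H_n(\partial X;\IZ/2\IZ)=0$.

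The implication ``$\Leftarrow$'' is immediate from the first fact applied with $Y=V=X$. For ``$\Rightarrow$'', assume $H_n(U;\IZ/2\IZ)\neq 0$. If $\partial X=\emptyset$, apply the first fact to $V=U$ inside $Y=X$: then $U$ is compact, hence clopen in the connected space $X$, so $U=X$, which is then compact with empty boundary, as required.

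Now suppose $\partial X\neq\emptyset$; I will derive a contradiction by passing to the double. By Perelman's doubling theorem the double $DX$ of $X$ along $\partial X$ is again an $n$-dimensional Alexandrov space of curvature bounded below, now with empty boundary, and it is connected because $\partial X\neq\emptyset$; let $\pi\colon DX\to X$ be the folding map and let $X_1,X_2$ be the two closed copies of $X$ in $DX$, so that $X_1\cup X_2=DX$ and $X_1\cap X_2=\partial X$. If $U\cap\partial X=\emptyset$, then $U$ is homeomorphic to the open connected subset $\pi^{-1}(U)\cap X_1$ of $DX$; the first fact (applied in $Y=DX$) forces this subset, hence $U$, to be compact, so $U$ is clopen in $X$ and $U=X$, whence $\partial X=\emptyset$ --- a contradiction. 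If instead $U\cap\partial X\neq\emptyset$, then $\pi^{-1}(U)$ is the union of the two copies $\pi^{-1}(U)\cap X_1$ and $\pi^{-1}(U)\cap X_2$ of $U$, glued along the nonempty set $U\cap\partial X$; it is therefore open and connected, and $\pi$ restricts to a retraction of $\pi^{-1}(U)$ onto one of these copies. Hence $H_n(U;\IZ/2\IZ)$ is a direct summand of $H_n(\pi^{-1}(U);\IZ/2\IZ)$, which is thus nonzero, and the first fact gives that $\pi^{-1}(U)$ is compact and equal to $DX$; equivalently $U=X$ and $X$ is compact. It remains to see that a compact $X$ with $\partial X\neq\emptyset$ cannot have $H_n(X;\IZ/2\IZ)\neq 0$: the Mayer--Vietoris sequence of the decomposition $DX=X_1\cup X_2$ (most cleanly phrased in Borel--Moore homology, where it is available for closed covers and agrees with singular homology since all the spaces here are compact), together with $H_n(\partial X;\IZ/2\IZ)=0$ and $H_n(DX;\IZ/2\IZ)\cong\IZ/2\IZ$ ($DX$ being compact), produces an injection $H_n(X;\IZ/2\IZ)^{\oplus 2}\hookrightarrow\IZ/2\IZ$, forcing $H_n(X;\IZ/2\IZ)=0$. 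This contradicts $H_n(U;\IZ/2\IZ)=H_n(X;\IZ/2\IZ)\neq 0$, completing the proof.

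The step I expect to be the main obstacle is making the first extracted fact precise from \cite{Mitsu}: one needs not merely the existence of a $\IZ/2\IZ$-fundamental class for compact boundaryless Alexandrov spaces, but that the top local-homology sheaf on the regular part is locally constant, so that the top-degree duality isomorphism $H_n(V;\IZ/2\IZ)\cong H^0_{\mathrm c}(V;\IZ/2\IZ)$ holds for arbitrary open connected $V$ and in particular vanishes when $V$ is noncompact. The other ingredients are routine, but one should still verify the small technical points: that $\partial X$ is closed (hence compact when $X$ is) and $(n-1)$-dimensional, that the copies $X_1,X_2$ are closed in $DX$, that $\pi$ really does retract $\pi^{-1}(U)$ onto a copy of $U$, and the precise version of Mayer--Vietoris invoked.
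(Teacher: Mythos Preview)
Your argument is correct, and at its core it draws the same input from \cite{Mitsu} as the paper does: your ``first fact'' is exactly the combination of \cite[Theorem~1.8(A),(C)]{Mitsu} and \cite[Corollary~5.7(i)]{Mitsu} that the paper cites for the boundaryless case. The genuine divergence is in how the case $U\cap\partial X\neq\emptyset$ is handled. The paper disposes of it in one line by invoking \cite[Corollary~1.14]{Mitsu}, which directly gives $H_n(U;\IZ/2\IZ)=0$; you instead reduce to the boundaryless case by passing to the double $DX$, using the retraction $\pi$ to transport nontriviality of $H_n(U)$ to $H_n(\pi^{-1}(U))$, and then a Mayer--Vietoris computation to rule out $H_n(X)\neq 0$ for compact $X$ with boundary. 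Your route buys some self-containedness (only the boundaryless statement of \cite{Mitsu} is needed, plus standard topology), at the cost of a few technical checks you correctly flag: the closed-cover Mayer--Vietoris needs either a collar of $\partial X$ in $DX$ (available via the gradient flow of $\dist_{\partial X}$) or the Borel--Moore formulation together with its agreement with singular homology on these spaces. Two small remarks: your ``second extracted fact'' about local homology detecting the boundary is set up but never actually used; and in the case $U\cap\partial X=\emptyset$ with $\partial X\neq\emptyset$ you are implicitly applying your first fact inside $DX$, which is precisely the paper's application of \cite[Theorem~1.8(C)]{Mitsu}, so that subcase is really the same in both proofs.
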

\begin{proof}
	First, note that if $ U\cap \partial X =\emptyset $, then $ U $ is an ``NB-space'' as defined in \cite{Mitsu} which allows us to apply their results.
	Let $ U=X $ be compact and $ \partial X = \emptyset $. In \cite{Mitsu} a notion of ``orientability'' is defined. They show that if $ X $ is ``orientable'', then $ H_n(X;\IZ)=\IZ $ (\cite[Theorem~1.8~(A)]{Mitsu}) and if it is not, then still $ H_n(X;\IZ/2\IZ)=\IZ/2\IZ $ (\cite[Corollary~5.7~(i)]{Mitsu}). We have in both cases $ H_n(X;\IZ/2\IZ)\neq \{0\} $ since $ H_n(X;\IZ)\neq \{0\} $ implies $ H_n(X;\IZ/2\IZ) \neq \{0\} $ which is immediate by the definition of singular homology or a trivial application of the universal coefficient theorem for homology\com{see Hatcher 3A.3}.
	To show the converse, assume $ U $ is a proper subset of $ X $ or $ X $ is not compact or $ X $ has nonempty boundary. If $ U\cap \partial X $ is nonempty, then, by \cite[Corollary~1.14]{Mitsu}, we obtain $ H_n(U;\IZ/2\IZ)=\{0\} $. If $ U\cap \partial X $ is empty, then either $ U=X $ and by assumption $ X $ is not compact or $ U\neq X $ in which case $ U $ is a nonempty proper open subset of the connected space $ X $ and thus not compact either. Hence, $ U $ is not compact but satisfies $ U\cap \partial X = \emptyset $ which allows us to apply \cite[Theorem~1.8~(C)]{Mitsu}.
\end{proof}
As a consequence, the boundary of Alexandrov spaces generalizes the manifold boundary in the following sense.
\begin{cor}\label{cor_boundaryManifold}
	If $ \varphi:X\to M $ is a homeomorphism between a finite-dimensional Alexandrov space $ X $ of curvature bounded below and a topological manifold $ M $, then it satisfies $ \varphi(\partial X)=\partial M $.
	\begin{proof}
		Let $ n:=\dim X $ and note that $ M $ must be $ n $-dimensional too. The relative homology groups of the space of directions at a point $ p\in X $ indicates if it has nonempty ($ H_{n-1}(\Sigma_p;\IZ/2\IZ)=\{0\} $) or empty ($ H_{n-1}(\Sigma_p;\IZ/2\IZ)\neq \{0\} $) boundary (see Lemma~\ref{lem_homology}). And by \cite[Theorem~6.4]{Mitsu} a sufficiently small ball $ B $ at $ p\in X $ is pointed homeomorphic to the tangent cone $ K(\Sigma_p) $ with the cone-tip $ 0_p $. The tangent cone is contractible but with the tip removed it is homotopy equivalent to $ \Sigma_p $. The long exact sequence for relative homology groups yields (all homology groups in $ \IZ/2\IZ $ coefficients)
		\[H_n(B,B\setminus\{p\})\approx H_n(K(\Sigma_p),K(\Sigma_p)\setminus\{0_p\}) \approx H_{n-1}(\Sigma_p). \]
		We can find an open neighborhood $ C\subseteq \varphi(B) $ of $ \varphi(p) $ which is homeomorphic to $ \IR^n $ or its closed half space. By excision, we conclude
		\begin{align*}
			H_{n-1}(\Sigma_p)&\approx H_n(B,B\setminus\{p\})
			\approx H_n(C,C\setminus\{\varphi(p)\})\\
			&\approx\begin{cases}
				\{0\} &\quad \text{ if $ \varphi(p) $ is a manifold boundary point},\\
				\IZ/2\IZ &\quad \text{ if $ \varphi(p) $ is a manifold inner point}.
			\end{cases}
		\end{align*}
		Therefore, $ p $ is a boundary point if and only if $ \varphi(p) $ is a (manifold) boundary point.
	\end{proof}
\end{cor}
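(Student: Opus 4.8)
The plan is to detect membership in the Alexandrov boundary $\partial X$ by a purely topological invariant that a homeomorphism must preserve, and to check that the very same invariant detects the manifold boundary $\partial M$. Concretely, writing $n=\dim X$ (so that $M$ is $n$-dimensional as well), I claim that for every $p\in X$,
\[
 p\in\partial X \iff H_n\bigl(X,X\setminus\{p\};\IZ/2\IZ\bigr)=\{0\},
\]
and symmetrically $q\in\partial M \iff H_n\bigl(M,M\setminus\{q\};\IZ/2\IZ\bigr)=\{0\}$. Since $\varphi$ restricts to a homeomorphism of pairs $(X,X\setminus\{p\})\to(M,M\setminus\{\varphi(p)\})$, the corresponding local homology groups are isomorphic, and the two characterizations then force $p\in\partial X\iff\varphi(p)\in\partial M$, which is exactly $\varphi(\partial X)=\partial M$.

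For the Alexandrov side I would first dispose of the low-dimensional cases directly: if $n=0$ then $X$ is a point, and if $n=1$ then $X$ is a $1$-manifold and the claim is the elementary topological invariance of the boundary of a $1$-manifold. For $n\ge 2$ I would invoke the local cone structure of Alexandrov spaces: a sufficiently small ball $B$ at $p$ is pointed-homeomorphic to the tangent cone $K(\Sigma_p)$ with tip $0_p$ (\cite[Theorem~6.4]{Mitsu}). The cone $K(\Sigma_p)$ is contractible, while $K(\Sigma_p)\setminus\{0_p\}$ deformation retracts onto $\Sigma_p$, so the long exact sequence of the pair together with excision gives
\[
 H_n\bigl(X,X\setminus\{p\};\IZ/2\IZ\bigr)\;\cong\;H_n\bigl(B,B\setminus\{p\};\IZ/2\IZ\bigr)\;\cong\;H_{n-1}\bigl(\Sigma_p;\IZ/2\IZ\bigr).
\]
Here $\Sigma_p$ is a compact, connected, $(n-1)$-dimensional Alexandrov space of curvature $\ge 1$, so Lemma~\ref{lem_homology} (applied with $U=\Sigma_p$ itself) says that $H_{n-1}(\Sigma_p;\IZ/2\IZ)\neq\{0\}$ precisely when $\partial\Sigma_p=\emptyset$, which by Definition~\ref{def_boundary} is precisely when $p\notin\partial X$. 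This establishes the Alexandrov-side equivalence.

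The manifold side is the classical local homology computation: choosing a chart neighborhood $C$ of $\varphi(p)$ homeomorphic to $\IR^n$ or to the closed half-space $\IR^n_{\ge 0}$, excision identifies $H_n(M,M\setminus\{\varphi(p)\};\IZ/2\IZ)$ with $\IZ/2\IZ$ when $\varphi(p)$ is an interior point of $M$ and with $\{0\}$ when $\varphi(p)\in\partial M$. Combining the three identifications closes the argument. The genuinely substantive ingredients are the topological cone structure \cite[Theorem~6.4]{Mitsu} and Lemma~\ref{lem_homology}; everything else is bookkeeping in singular homology. The step I expect to require the most care is verifying that the hypotheses of Lemma~\ref{lem_homology} really are met by $\Sigma_p$ — nonempty, connected, compact, of the correct dimension $n-1$ — and that the deformation retraction $K(\Sigma_p)\setminus\{0_p\}\simeq\Sigma_p$ is legitimate in the required generality; this is precisely the reason for separating off the cases $n\le 1$ at the very start.
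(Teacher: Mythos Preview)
Your proposal is correct and follows essentially the same route as the paper's proof: both compute the local homology $H_n(B,B\setminus\{p\};\IZ/2\IZ)$ via the pointed homeomorphism to the tangent cone $K(\Sigma_p)$ and the long exact sequence, reduce to $H_{n-1}(\Sigma_p;\IZ/2\IZ)$, and then invoke Lemma~\ref{lem_homology} together with the classical manifold computation. The only differences are cosmetic: you pass through the global pair $(X,X\setminus\{p\})$ by excision before localizing, whereas the paper works directly with a small ball and excises on the manifold side; and you explicitly separate out the cases $n\le 1$, which the paper leaves implicit.
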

A common construction that produces spaces with empty boundary is the following.
\begin{defn}\label{def_doubling}
	Let $ X $ be a finite-dimensional Alexandrov space of curvature $ \geq \kappa $ with nonempty boundary. The \textit{doubling} $ \overline{X} $ of $ X $ is defined to be the metric space obtained from gluing together two copies of $ X $ along its boundary equipped with the induced length metric.
\end{defn}
The doubling is again an Alexandrov space of curvature $ \geq \kappa $ and has empty boundary (cf.\ \cite[5.2]{P2} and \cite[2.1]{Pet_AppQ}). In particular, this implies the following.
\begin{lem}\label{lem_distBoundaryBound}
	Let $ X $ be an Alexandrov space of curvature $ \geq \kappa > 0  $ and finite dimension $ \geq 2 $ with nonempty boundary. Then the distance to the boundary $ \dist_{\partial X} $ is bounded by $ D_\kappa/2 $.
\end{lem}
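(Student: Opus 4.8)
The plan is to pass to the doubling $\overline{X}$ of $X$ and there invoke the diameter bound of Lemma~\ref{lem_factsAS}~\ref{lem_diameterAS}. As recalled just above the statement, $\overline{X}$ is again an Alexandrov space of curvature $\geq\kappa$ with empty boundary; moreover $\overline{X}$ contains an isometric copy of $X$, hence has dimension $\geq 2$ and in particular is not one-dimensional. Therefore Lemma~\ref{lem_factsAS}~\ref{lem_diameterAS} applies and gives $\diam\overline{X}\leq D_\kappa=\pi/\sqrt{\kappa}$.

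Next I would translate this into a bound on $\dist_{\partial X}$. Write $\overline{X}=X_1\cup X_2$ for the two glued copies of $X$, which share the common subset $\partial X$, and let $F\colon\overline{X}\to X$ be the natural folding map restricting to the canonical isometry on each $X_i$ (this is well defined and continuous since the two identifications agree on $\partial X$). Because $\overline{X}$ carries the length metric induced by the gluing, $F$ does not increase the length of curves and is thus $1$-Lipschitz. Now fix $p\in X$ and let $p_1\in X_1$, $p_2\in X_2$ be its copies; the case $p\in\partial X$ is trivial since then $p_1=p_2$ and $\dist_{\partial X}(p)=0$, so assume $p\notin\partial X$. The sets $X_1\setminus\partial X$ and $X_2\setminus\partial X$ are disjoint open subsets of $\overline{X}$ whose union is $\overline{X}\setminus\partial X$, so any curve $\gamma$ from $p_1$ to $p_2$ must meet $\partial X$, say at a point $q$. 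Then
\[
L(\gamma)\ \geq\ d_{\overline{X}}(p_1,q)+d_{\overline{X}}(q,p_2)\ \geq\ d_X(p,F(q))+d_X(F(q),p)\ \geq\ 2\,\dist_{\partial X}(p),
\]
using that $F$ is $1$-Lipschitz with $F(p_1)=F(p_2)=p$ and $F(q)\in\partial X$. Taking the infimum over all such $\gamma$ yields $d_{\overline{X}}(p_1,p_2)\geq 2\,\dist_{\partial X}(p)$ (one in fact has equality, by concatenating two minimizing segments from $p$ to a nearest boundary point, one in each copy, but the inequality is all that is needed).

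Combining the two steps, for every $p\in X$ we obtain
\[
2\,\dist_{\partial X}(p)\ \leq\ d_{\overline{X}}(p_1,p_2)\ \leq\ \diam\overline{X}\ \leq\ D_\kappa,
\]
which is the claimed bound. I expect the only points requiring care to be the two elementary properties of the doubling used here — that the folding map $F$ is $1$-Lipschitz, and that a curve joining the two copies of a point must cross the glued boundary — both of which follow directly from the description of $\overline{X}$ as a length space obtained by gluing along $\partial X$; the curvature, emptiness of the boundary, and finiteness of the dimension of $\overline{X}$ are supplied by the references cited in the preceding paragraph together with the observation that $\overline{X}$ contains an isometric copy of $X$.
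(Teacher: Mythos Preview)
Your proof is correct and follows essentially the same route as the paper: pass to the doubling $\overline{X}$, apply the diameter bound $\diam\overline{X}\leq D_\kappa$, and relate $d_{\overline{X}}(p_1,p_2)$ to $2\dist_{\partial X}(p)$. You are somewhat more explicit than the paper in verifying that $\overline{X}$ is not one-dimensional (needed for the diameter bound) and in justifying the inequality $d_{\overline{X}}(p_1,p_2)\geq 2\dist_{\partial X}(p)$ via the folding map, where the paper simply asserts the equality $d(p,\bar{p})=2\dist_{\partial X}(p)$.
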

\begin{proof}
	Consider the doubling $ \overline{X} $ of $ X $ which satisfies $ \diam \overline{X} \leq D_\kappa $ by Lemma~\ref{lem_factsAS}~\ref{lem_diameterAS}. Fix $ p\in X $ and assume $ \dist_{\partial X}(p)>0 $. Then the distance of $ p $ to its reflection point $ \bar{p} $ in the other copy of $ X $ satisfies $ d(p,\bar{p}) = 2 \inf_{q\in\partial X} d(p,q) $. Therefore,
	\[\dist_{\partial X}(p) = \frac12 d(p,\bar{p}) \leq \frac{D_\kappa}{2}. \qedhere \]
\end{proof}
%%%%%%%%%%%%%%%%%%%%%%%%%%%%%%%%%%%%%%%%%%%%%%%%%%%%%%%%%%%%%%%%%%%
\section{The Cheeger-Gromoll Soul}\label{sec_Soul}
%%%%%%%%%%%%%%%%%%%%%%%%%%%%%%%%%%%%%%%%%%%%%%%%%%%%%%%%%%%%%%%%%%%
In this section we summarize the results of \cite{P2} that generalize the soul to Alexandrov spaces. For convenience of the reader we do so in a detailed manner.
\subsection{The Soul Construction}
To define the soul let us first recall the definition of totally convex subsets.
\begin{defn}\label{def_totConv}
	Let $ X $ be a metric space. A subset $ A\subseteq X $ is said to be \textit{totally convex}\index{Convex!totally convex subset} if every pair of points in $ A $ can be joined by a local geodesic in $ X $ and the image of every such local geodesic is contained in $ A $.
\end{defn}
Total convexity is far more restricting than convexity. For instance, the only totally convex subset of the sphere $ \IS^n $ is $ \IS^n $ itself.
In Riemannian manifolds the soul of a manifold is a compact, totally geodesic and totally convex submanifold (without boundary). This leads to the following generalization to Alexandrov spaces.
\begin{defn}
	Let $ X $ be a finite-dimensional Alexandrov space of curvature bounded below. A nonempty, totally convex, compact subset of $ X $ without boundary is called a \textit{soul}\index{Soul} of $ X $.
\end{defn}
Note that, as a closed and convex subset, a soul is a finite-dimensional Alexandrov space with lower curvature bound itself and the boundary is defined. Since a closed and convex subset of a Riemannian manifold is a submanifold possibly with (non-smooth) boundary (\cite[1.6]{CG}), the preceding definition of a soul in Alexandrov space generalizes the Riemannian soul.

A compact space $ X $ without boundary always serves as its own soul and is thus rather uninteresting. But for other spaces the soul can serve as a compact and boundaryless reduction of the whole space, inheriting some of its topological and geometric properties. A natural question is if such a soul always exists. Cheeger and Gromoll gave a constructive proof for Riemannian manifolds with nonnegative sectional curvature in \cite{CG} which Perelman generalized to Alexandrov spaces in \cite[6]{P2}. To obtain totally convex compact subsets of a non\-compact space, the so-called Busemann functions will turn out to be useful.

\begin{defn}\label{def_busemannFunc}
	Let $ X $ be a metric space and $ c:[0,\infty)\to X $ be a unit-speed ray. Then the \textit{Busemann function}\index{Busemann function} $ b_c $ is given by
	$$ b_c:X\to\IR, \quad q\mapsto \lim_{t\to\infty}d(q,c(t))-t$$
\end{defn}
\begin{rem}\label{rem_BusemannWellDefined}
	By the standard and reverse triangle inequality, the map \linebreak$ {t\mapsto d(q,c(t)) - t} $ is non\-increasing and bounded from below. Hence, the Busemann function is well-defined. Furthermore, standard arguments show that it is continuous. Superlevel sets can be understood as \textit{half spaces}, e.g.\ $ X\setminus b_c^{-1}([0,\infty)) = \cup_{r>0}B_r(c(r)) $.
\end{rem}
The following establishes the existence of rays in noncompact spaces to enable the use of Busemann functions. This is a well-known and often used fact and mainly due to the compactness of the space of directions which was proved in \cite[7.3]{BGP}. We include a proof for the convenience of the reader.
\begin{lem}\label{lem_rayExistence}
	Let $ X $ be a non\-compact finite-dimensional Alexandrov space of curvature bounded below. Then for every point $ p\in X $, there exists a ray issuing in $ p $.
\end{lem}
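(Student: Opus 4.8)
The plan is to obtain the ray as a limit of minimizing geodesics running from $p$ to points that escape to infinity, using the compactness of geodesics (Lemma~\ref{lem_factsAS}~\ref{lem_geodLimit}) together with a diagonal argument over larger and larger intervals. First I would record that $X$, being finite-dimensional, is locally compact, so that Lemma~\ref{lem_factsAS}~\ref{lem_hopfRinow} applies: $X$ is proper and geodesic. A proper metric space is compact precisely when it is bounded, so the noncompactness of $X$ yields a sequence $(q_n)_{n\in\IN}$ with $r_n:=d(p,q_n)\to\infty$; passing to a subsequence we may assume $r_n\geq n$. For each $n$ I choose a unit-speed minimizing geodesic $\gamma_n\colon[0,r_n]\to X$ from $p$ to $q_n$, so that $\gamma_n(0)=p$ and $d(\gamma_n(s),\gamma_n(t))=|s-t|$ for all $s,t\in[0,r_n]$.

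Next I would extract compatible limits on increasing intervals by a nested subsequence argument. Put $I_0:=\IN$, and inductively, given an infinite set $I_{k-1}\subseteq\IN$, apply Lemma~\ref{lem_factsAS}~\ref{lem_geodLimit} to the curves $\gamma_n|_{[0,k]}$ with $n\in I_{k-1}$, $n\geq k$ (affinely reparametrized onto $[0,1]$): these are geodesics contained in the bounded ball $B_{k+1}(p)$, so there is an infinite $I_k\subseteq I_{k-1}$ along which $\gamma_n|_{[0,k]}$ converges uniformly to a geodesic $c_k\colon[0,k]\to X$. Letting $n\to\infty$ in $d(\gamma_n(s),\gamma_n(t))=|s-t|$ (valid once $n\geq k$) shows that $c_k$ is unit-speed, in particular nontrivial, and $c_k(0)=p$. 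For $m<k$ the set $I_k$ is a subsequence of $I_m$, and uniform convergence on $[0,k]$ restricts to uniform convergence on $[0,m]$, so uniqueness of limits gives $c_k|_{[0,m]}=c_m$.

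Finally I would diagonalize: picking $n_1<n_2<\cdots$ with $n_k\in I_k$ gives a sequence whose tail from index $m$ on lies in $I_m$, hence $\gamma_{n_k}|_{[0,m]}\to c_m$ for every $m$. The compatibility $c_k|_{[0,m]}=c_m$ lets me define $c\colon[0,\infty)\to X$ unambiguously by $c|_{[0,k]}:=c_k$. Then $c(0)=p$, and since every compact subinterval of $[0,\infty)$ lies in some $[0,k]$ on which $c$ agrees with the unit-speed geodesic $c_k$, each restriction of $c$ to a compact interval is a nontrivial unit-speed geodesic; that is, $c$ is a ray issuing in $p$. The only genuine work here is the bookkeeping of the nested subsequences and checking that the limit geodesics on successive intervals agree — everything else is an immediate consequence of properness and Lemma~\ref{lem_factsAS}~\ref{lem_geodLimit}.
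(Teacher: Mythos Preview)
Your proof is correct. Both arguments start the same way (properness gives an escaping sequence and minimizing geodesics from $p$), but diverge in how a single limiting ray is extracted. The paper exploits Alexandrov structure: it passes to the space of directions $\Sigma_p$, which is compact, picks one convergent subsequence of initial directions, and then uses angle monotonicity to show that this \emph{single} subsequence already converges uniformly on every $[0,t]$ to a geodesic of the limiting direction. Your route is the classical Arzel\`a--Ascoli diagonalization: nested subsequences on $[0,k]$ via Lemma~\ref{lem_factsAS}~\ref{lem_geodLimit}, compatibility of the partial limits, and a diagonal sequence. Your argument is more elementary and in fact works in any proper geodesic space, with no appeal to $\Sigma_p$ or curvature comparison; the paper's argument avoids the diagonal bookkeeping at the cost of invoking compactness of $\Sigma_p$ and the law of cosines.
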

\begin{proof}
	Since $ X $ is proper and not compact it must be unbounded. Therefore, there exists a sequence of points $ (q_i)_{i\in\IN} $ with $ d(p,q_i)\to \infty $. For $ i\in\IN $, fix the directions $ \xi_i\in\Sigma_p $ corresponding to a geodesic from $ p $ to $ q_i $. The space of directions $ \Sigma_p $ is compact and hence there is a convergent subsequence of directions $ (\xi_{i_n})_{n\in\IN} $ which correspond to unit-speed geodesics $ (c_n)_{n\in\IN} $ from $ p $ to $ q_{i_n} $. Let $ \xi\in\Sigma_p $ be the limit of that sequence and rename $ \xi_n:=\xi_{i_n} $. Fix $ t\in[0,\infty) $ and consider the closed ball of radius $ t $ at $ p $ which contains the restrictions $ (c_n|_{[0,t]})_{n\in\IN} $ for sufficiently large $ n $. By Lemma~\ref{lem_factsAS}~\ref{lem_geodLimit}, there exists a subsequence $ (c_{n_k}|_{[0,t]})_{k\in\IN} $ which converges to a unit-speed geodesic $ r_t $ of length $ t $. By the law of cosines, it has direction $ \xi $\com{Consider the isosceles triangles $ \tilde{\Delta}^K(p,r_t(s),c_n(s)) $ for fixed $ s $ where the side tends to zero and hence the opposite angle. With triangle inequality we obtain $ \angle_p(r_t,c)=0 $}. We show that the whole sequence $c_n|_{[0,t]} $ converges to $ r_t $. Consider $ s\in [0,t] $ and the comparison triangle $ \tilde{\Delta}^K(p,c_n(s),r_t(s)) $ with the angle 
	$$ \tilde{\alpha}_n(s):=\tilde{\angle}_p^K(c_n(s),r_t(s))\leq \angle_p(\xi_n,\xi) $$
	This gives a bound independent of $ s $ that tends to zero. Hence, so does the length $ d(c_n(s),r_t(s)) $ of the side in the isosceles triangle $ \tilde{\Delta}^K(p,c_n(s),r_t(s)) $ opposite to the angle $ \tilde{\alpha}_n(s) $. Consequently, the geodesics $ c_n|_{[0,t]} $ converge uniformly to $ r_t $. 
	This allows us to define, for all $ t\in[0,\infty) $, the pointwise limit $ r(t):=\lim_{n\to\infty}c_n(t)=r_t(t) $. To show that $ r $ is indeed a ray, let $ t\in[0,\infty) $. Since $ (c_n|_{[0,t]})_{n\in\IN} $ converges uniformly to $ r_t $, we have $ r(s)=r_t(s) $ for all $ s\in[0,t] $ and therefore $ r|_{[0,t]}=r_t $ is a geodesic. Therefore, $ r $ is a ray.
\end{proof}
We now give a short informal description of the soul construction. To find a soul of a finite-dimensional Alexandrov space $ X $ of nonnegative curvature, proceed as follows. If $ X $ is not compact, choose an arbitrary point $ p_0\in X $. Obtain a compact totally convex subset as the intersection of superlevel sets of all Busemann functions corresponding to rays emanating from $ p $. As long as this set has non\-empty boundary, obtain the next set as the subset of maximum distance to the boundary. In each step the dimension is reduced so that we arrive at a subset of dimension zero or a subset without boundary after finitely many steps. This is a soul (see Figure~\ref{fig_soulCyl}).
\begin{figure}[h]
	\centering
	\includegraphics[width=0.8\linewidth]{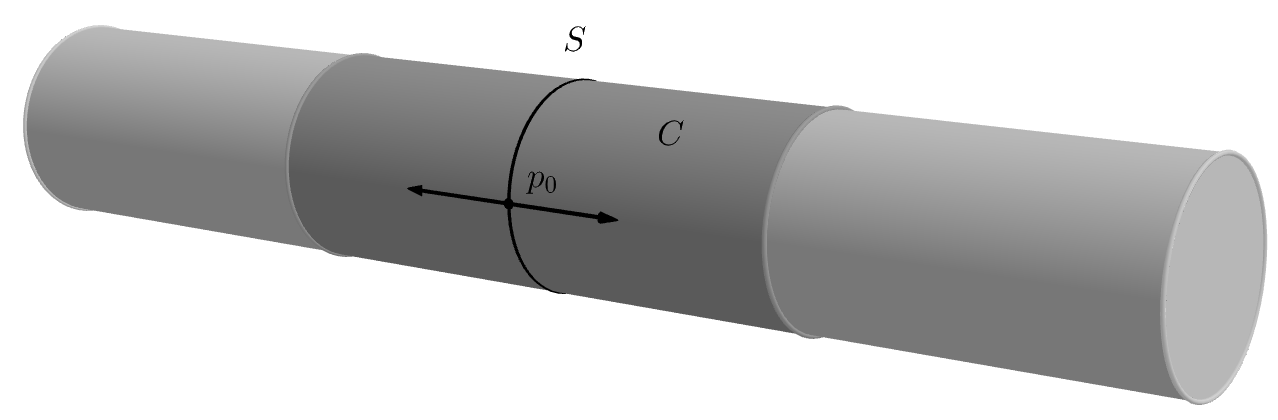}
	\caption{The soul construction on the cylinder terminating in a soul $ S $. $ C $ is a superlevel set of the minimum of the Busemann functions corresponding to the two rays issuing in $ p_0 $}\label{fig_soulCyl}
\end{figure}

We need to prove that the construction outlined above is well-defined, of which one part is that it yields a totally convex subset in each step. For this, the concavity of the Busemann function and the boundary distance function is crucial in the following way.
\begin{defn}\label{def_concaveFunc}
	Let $ X $ be a metric space. A function $ f:X\to \IR $ is said to be \textit{(strictly) concave}\index{Concave function}\index{Convex!function} if, for every nontrivial geodesic $ c:[0,L]\to X $, the real map $ f\circ c $ is (strictly) concave.
\end{defn}
Note that some authors use the notion of ``convexity'' of functions (e.g.\ \cite[1.9]{CG},\cite[4.2]{ShioCompNoncomp},\cite[6.1]{P2}) which corresponds to our definition of concavity.
Also the meaning of ``strictly concave'' varies in the literature (see Remark~\ref{rem_strictlyConcave}).
Note that even the composition of a concave function with a \textit{local} geodesic is concave since concavity of real continuous functions is a local property (e.g.\ see \cite[Theorem~1.1.3]{NiPe}).
An immediate consequence is the following.\com{Note that concavity does not imply continuity for functions on $ X $. See \cite{Mash}?}
\begin{lem}\label{lem_concaveSuperLevelSets}
	Let $ X $ be a metric space and $ f:X\to\IR $ be a concave function. Then for each $ a\in \IR $ the superlevel set $ f^{-1}([a,\infty)) $ is totally convex.
\end{lem}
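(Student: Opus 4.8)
The plan is to unwind both definitions and check directly that a local geodesic with endpoints in the superlevel set stays in the superlevel set. Fix $ a\in\IR $ and write $ A:=f^{-1}([a,\infty)) $. Let $ x,y\in A $ be two points and let $ c:[0,L]\to X $ be any local geodesic joining them (such a local geodesic exists as a genuine geodesic since $ X $ is geodesic, but for total convexity we must handle \emph{every} local geodesic between $ x $ and $ y $, not just one). We must show $ c(t)\in A $ for all $ t\in[0,L] $, i.e.\ $ f(c(t))\ge a $.

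The key step is to invoke concavity of $ f $ along $ c $. By Definition~\ref{def_concaveFunc}, and the remark following it that concavity is a local property and therefore passes to compositions with local geodesics, the real-valued function $ g:=f\circ c:[0,L]\to\IR $ is concave. Since $ g(0)=f(x)\ge a $ and $ g(L)=f(y)\ge a $, concavity of $ g $ gives for every $ t\in[0,L] $, writing $ t=(1-\lambda)\cdot 0+\lambda L $ with $ \lambda=t/L\in[0,1] $,
\begin{equation*}
	g(t)\ge (1-\lambda)\,g(0)+\lambda\,g(L)\ge (1-\lambda)\,a+\lambda\,a=a.
\end{equation*}
Hence $ f(c(t))\ge a $, so $ c(t)\in A $ for all $ t $, and thus $ A $ is totally convex.

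There is essentially no obstacle here; the only point requiring a word of care is the passage from concavity along geodesics (as literally stated in Definition~\ref{def_concaveFunc}) to concavity along \emph{local} geodesics, which is exactly what total convexity (Definition~\ref{def_totConv}) demands. This is precisely the content of the remark preceding the lemma, citing \cite[Theorem~1.1.3]{NiPe}: a continuous real function that is concave in a neighborhood of each point of an interval is concave on that interval, so restricting $ f\circ c $ to short subintervals on which $ c $ is (a reparametrization of) a minimizing geodesic yields local concavity, which then globalizes. One should also note the trivial cases: if $ x=y $ or $ c $ is constant there is nothing to prove, and if $ A=\emptyset $ or consists of a single point the statement is vacuous or immediate.
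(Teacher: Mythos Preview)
Your argument is correct and is exactly the immediate verification the paper has in mind; the paper gives no proof and simply labels the lemma an immediate consequence of the definitions together with the remark (which you invoke) that concavity passes to compositions with local geodesics. One small slip: you write ``since $X$ is geodesic,'' but the stated hypothesis is only that $X$ is a metric space, so the existence clause in Definition~\ref{def_totConv} is not automatic from the hypotheses---this is a lacuna in the lemma's formulation rather than in your reasoning, and in every application in the paper $X$ is in fact a geodesic Alexandrov space.
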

Consider a geodesic segment $ c $ on the southern hemisphere of the two-dimensional sphere of curvature $ \kappa>0 $ with north pole $ N $. Then the distance function $ t\mapsto d(c(t),N) $ is concave. By triangle comparison (Definition~\ref{def_Curv}) one can translate this to Alexandrov spaces of curvature $ \geq \kappa >0 $, i.e.\ the distance function is concave above $ \frac{D_\kappa}{2} $, half the diameter of the sphere. In the case of non\-negative curvature where $ D_\kappa=\infty $ the distance function indeed becomes concave ``at infinity'', that is, the Busemann function is concave. This is well known and a proof can be found for instance in \cite[4.2]{ShioCompNoncomp} or \cite[Exercise~8.25]{AKP_Found}.\com{They conclude with $ d=\dist_{c(t)} $ from $ d'^2 + dd'' = (d^2/2)''\leq 1 $ that $ d''\leq \frac{1-d'^2}{d} = \frac{1-\cos^2\alpha}{d}\leq \frac{1}{d} $ and hence for each $ \eps>0 $ there is large $ t $ such that $ d $ is $ \eps $-concave. We used the chain rule for the derivative in the barrier sense (reference?). Hence for all $ \eps>0 $ the function $ (b_c\circ\gamma-\eps t^2) $ is concave.}
\begin{lem}\label{concaveBusemann}\index{Busemann function}
	Let $ X $ be an Alexandrov space of non\-negative curvature and $ c:[0,\infty)\to X $ be a unit-speed ray. Then the Busemann function $ b_c $ is concave.
\end{lem}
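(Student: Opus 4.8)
The plan is to realize $b_c$, along each geodesic, as a pointwise limit of suitably normalized squared distance functions and to use that a pointwise limit of concave real functions is concave. The input from nonnegative curvature is the elementary comparison fact that for every point $p\in X$ and every unit-speed geodesic $\gamma\colon[a,b]\to X$ the real function $s\mapsto d(p,\gamma(s))^2-s^2$ is concave. Indeed, in the model plane $M_0=\IR^2$ the function $s\mapsto|\tilde{p}-\tilde{\gamma}(s)|^2-s^2$ is \emph{affine} along any unit-speed line $\tilde{\gamma}$; hence, applying the CBB($0$) inequality (Definition~\ref{def_Curv}) to the triple $\gamma(s_1),\gamma(s_3),p$ with the point $z=\gamma(s_2)$ on the segment $\gamma|_{[s_1,s_3]}$, for arbitrary $s_1<s_2<s_3$ in $[a,b]$, one obtains that the continuous function $s\mapsto d(p,\gamma(s))^2-s^2$ lies on or above each of its chords, which is concavity. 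One could instead cite \cite[Exercise~8.25]{AKP_Found} or \cite[4.2]{ShioCompNoncomp}.

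By definition of a concave function, and since every nontrivial geodesic is an affine reparametrization of a unit-speed one while affine substitutions preserve concavity, it suffices to show $b_c\circ\gamma$ is concave for an arbitrary unit-speed geodesic $\gamma\colon[a,b]\to X$. Fix such a $\gamma$ and, for $t>0$, put $\rho_t(s):=d(\gamma(s),c(t))$ and $u_t(s):=\rho_t(s)-t$; by Remark~\ref{rem_BusemannWellDefined}, $t\mapsto u_t(s)$ is nonincreasing and converges to $b_c(\gamma(s))$ for each $s$. For every $t$ large enough that $c(t)\notin\gamma([a,b])$, applying the comparison fact with $p=c(t)$ shows that $s\mapsto\rho_t(s)^2-s^2$ is concave on $[a,b]$; dividing by $2t>0$, subtracting the constant $t/2$, and using $\rho_t=t+u_t$, the function
\begin{equation*}
	g_t(s)\ :=\ \frac{\rho_t(s)^2-s^2}{2t}-\frac t2\ =\ u_t(s)+\frac{u_t(s)^2}{2t}-\frac{s^2}{2t}
\end{equation*}
is concave on $[a,b]$.

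It remains to let $t\to\infty$. Fixing $t_0>0$, monotonicity gives $b_c(\gamma(s))\le u_t(s)\le u_{t_0}(s)$ for $t\ge t_0$ and all $s$; since $b_c\circ\gamma$ and $u_{t_0}$ are continuous on the compact interval $[a,b]$, the family $\{u_t\}_{t\ge t_0}$ is uniformly bounded there, say by $M$. Hence $|u_t(s)^2/(2t)|\le M^2/(2t)\to0$ and $s^2/(2t)\le\max(a^2,b^2)/(2t)\to0$ uniformly on $[a,b]$, so $g_t\to b_c\circ\gamma$ pointwise on $[a,b]$. A pointwise limit of concave functions obeys the same chord inequalities, hence is concave; therefore $b_c\circ\gamma$ is concave, and $b_c$ is concave.

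The proof has no deep step; the parts needing care will be getting the direction of the inequality right in the quadratic comparison fact (or citing it correctly) and verifying the uniform bound on $\{u_t\}$ that forces the two error terms in $g_t$ to vanish in the limit. One can alternatively argue in the ``$\varepsilon$-concavity'' spirit of the marginal note — for each $\varepsilon>0$, choosing $t$ large enough makes $b_c\circ\gamma-\varepsilon s^2$ concave, and one then lets $\varepsilon\downarrow0$ — but dividing by $t$ as above makes the error terms explicit and avoids the $\varepsilon$-bookkeeping.
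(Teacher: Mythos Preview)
Your proof is correct and follows essentially the same approach as the references the paper cites for this lemma: the key input is the CBB($0$) comparison fact that $s\mapsto d(p,\gamma(s))^2-s^2$ is concave along any unit-speed geodesic (equivalently, $(d^2/2)''\le 1$ in the barrier sense), and one then passes to the limit $t\to\infty$. Your device of dividing by $2t$ to obtain the concave functions $g_t$ converging pointwise to $b_c\circ\gamma$ is a clean variant of the $\varepsilon$-concavity argument (show $b_c\circ\gamma-\varepsilon s^2$ is concave for every $\varepsilon>0$, then let $\varepsilon\downarrow0$) that the cited sources use; both are standard and yours makes the error terms pleasantly explicit.

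One small remark: you invoke the CBB($0$) inequality globally, with $p=c(t)$ arbitrarily far from the segment $\gamma([a,b])$. The paper defines curvature bounds locally, so strictly speaking this uses the Globalization Theorem (mentioned right after Definition~\ref{def_Curv}), which requires $X$ to be geodesic. In every application of the lemma in the paper $X$ is finite-dimensional, hence proper and geodesic, so there is no issue; but if you want the argument to stand on its own you might note this, or alternatively observe that concavity is a local property and combine local comparison with a short globalization step.
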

The concavity of the boundary distance function 
\[\dist_{\partial X} : X \to [0,\infty),\ p\mapsto d(p,\partial X).\]
is less obvious. It was proved in \cite{P2} and later put into a more general context in \cite{ABConvex}. In \cite[3.3.1]{PetrGradFlow} a proof can be found that simplifies the application of some of the structure theory in \cite{P2} to the use of the gradient exponential map.
To treat the case of a positive lower curvature bound, we introduce a generalized notion of concavity.
\begin{defn}
	Let $ c\in \IR $, $ I\subseteq \IR $ an interval, $ t_0\in I $ and $ h:I\subseteq \IR \to \IR $ be a continuous function. We say that $ h $ satisfies $ h''(t_0)\leq c $ \textit{in the barrier sense} if there exists a smooth function $ g:I\to \IR $ such that $ h\leq g $, $ h(t_0)=g(t_0) $ and $ g''(t_0)\leq c $. For a continuous function $ \varphi:\IR\to\IR $, we say that $ h''\leq \varphi\circ h $ \textit{in the barrier sense} (or $ h $ is \textit{$ (\varphi \circ h) $\=/concave}) if for all $ t_0\in I $, we have $ h''(t_0)\leq \varphi(h(t_0)) $ in the barrier sense. A function $ f:X\to \IR $ on a metric space $ X $ is called \textit{$ (\varphi \circ f) $\=/concave} if $ h:=f\circ c $ is $ (\varphi\circ h) $\=/concave for each unit-speed geodesic $ c $ in $ X $. 
\end{defn}
\com{See my real concave functions notes}Note that there are other common definitions of differential inequalities, some of which are equivalent to the above. We are especially interested in the case $ \varphi(f)=\lambda-\kappa f $ with $ \lambda,\kappa\in\IR $ for which equivalent definitions can be found in \cite[3.14]{AKP_Found}.
\begin{prop}\label{prop_distConcave}
	Let $ X $ be a finite-dimensional Alexandrov space of curvature $ \geq \kappa $ with non\-empty boundary $ \partial X $.
	\begin{enumerate}[label=(\roman*)]
		\item \cite[6.1]{P2}. If $ \kappa=0 $, then $ \dist_{\partial X} $ is concave.
		\item \cite[1.1~(5B)]{ABConvex}. If $ \kappa>0 $, then $ f:=\sin\left(\sqrt{\kappa}\dist_{\partial X}\right) $ is $ (-\kappa f) $\=/concave. \com{Note that $ \dist_{\partial X}(X\setminus \partial X)\in(0,\frac{D_\kappa}{2}] $. This comes from the fact that the doubling of $ X $ has diameter $ \leq D_\kappa $ and hence $ d(p,\partial X) = \frac12 d(p,\bar{p})\leq \diam \bar{X} $}
	\end{enumerate}
\end{prop}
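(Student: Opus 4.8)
The plan is not to prove Proposition~\ref{prop_distConcave} from scratch; both parts genuinely belong to \cite{P2} and \cite{ABConvex}, and what follows is only how I would organise the argument and where the real difficulty sits. First I would pass to the doubling $ \overline{X} $, which by the remarks after Definition~\ref{def_doubling} is again an Alexandrov space of curvature $ \geq \kappa $ but with \emph{empty} boundary, and write $ \iota\colon\overline{X}\to\overline{X} $ for the canonical reflection, an isometry whose fixed point set is $ \partial X $; as a fixed point set of an isometry, $ \partial X $ is a convex subset of $ \overline{X} $. Exactly as in the proof of Lemma~\ref{lem_distBoundaryBound}, for $ p\in X\setminus\partial X $ any shortest path of $ \overline{X} $ from $ p $ to its mirror point $ \iota p $ crosses $ \partial X $ in its midpoint, so $ \dist_{\partial X}(p)=\tfrac12 d(p,\iota p) $; for $ \dim X\geq 2 $ this is moreover $ \leq D_\kappa/2 $, so $ \sqrt\kappa\,\dist_{\partial X} $ stays in $ [0,\tfrac\pi2] $ and $ \sin(\sqrt\kappa\,\dist_{\partial X}) $ determines $ \dist_{\partial X} $, while the one-dimensional case is elementary since there $ X $ is an interval, ray, line or circle. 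Using that a geodesic of $ X $ either runs inside $ \partial X $, where $ \dist_{\partial X} $ vanishes and both assertions are trivial, or meets $ \partial X $ only at its endpoints, where $ \dist_{\partial X}\circ c $ merely starts and/or ends at $ 0 $ and concavity on the open interval suffices, I would reduce to checking the barrier inequality of Proposition~\ref{prop_distConcave} at points of $ X\setminus\partial X $.

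For such a point $ p=c(t_0) $ on a unit-speed geodesic $ c $ of $ X $, pick a nearest boundary point $ m\in\partial X $; then $ [c(t_0),m]\cup[m,\iota c(t_0)] $ is a minimizing geodesic of $ \overline{X} $ reversed by $ \iota $, hence ``normal'' to $ \partial X $ at $ m $. The comparison model is $ M_\kappa $ (of dimension $ \dim X $) together with a totally geodesic hypersurface $ F $ in the role of $ \partial X $: there $ x\mapsto\sin(\sqrt\kappa\,\dist_F(x)) $ is, up to a constant, a linear ``height'' function, so along every unit-speed geodesic $ g $ it satisfies $ g''=-\kappa g $ \emph{with equality}, degenerating for $ \kappa=0 $ to $ \dist_F $ being affine along geodesics. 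So the content of Proposition~\ref{prop_distConcave} is that a lower curvature bound upgrades equality to the one-sided inequality, i.e.\ $ \sin(\sqrt\kappa\,\dist_{\partial X}) $ (resp.\ $ \dist_{\partial X} $ when $ \kappa=0 $) is at least as concave along geodesics as its model counterpart.

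The main obstacle is precisely extracting that inequality, and here plain triangle comparison is not enough: feeding the usual curvature-$ \geq\kappa $ estimates the distance to the \emph{fixed point} $ m $ only yields an upper barrier of the form $ t\mapsto d(c(t),m) $, which is convex and thus has the wrong sign --- the extra concavity is a genuine feature of the symmetric, codimension-one ``wall'' $ \partial X $, not of distance functions in general. Resolving this requires the fine local structure of an Alexandrov space near its boundary: for $ \kappa=0 $ this is carried out in \cite[6.1]{P2} by Perelman's structure theory (and recast through the gradient exponential map in \cite[3.3.1]{PetrGradFlow}), while \cite[1.1\,(5B)]{ABConvex} handles general $ \kappa $ within a broader convexity framework. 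Accordingly I would simply invoke those results; the two cases are proved independently, the $ \kappa=0 $ version being what is needed for nonnegatively curved spaces in the soul construction and the $ \kappa>0 $ version entering when a positive lower bound is available.
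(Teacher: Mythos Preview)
Your approach matches the paper's: this proposition is stated with citations and \emph{not} proved in the paper at all, so your decision to invoke \cite[6.1]{P2} and \cite[1.1\,(5B)]{ABConvex} rather than reprove them is exactly what the authors do. The geometric context you add (doubling, reflection, model-space heuristic, and the observation that naive triangle comparison to a fixed foot point gives the wrong sign) is expository surplus not present in the paper, but it is accurate as motivation.

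Two small caveats on that expository material, neither of which affects the outcome since you defer to the cited sources anyway: the claim that $\partial X$ is convex in $\overline{X}$ ``as a fixed point set of an isometry'' is not automatic in CBB spaces without uniqueness of geodesics, and the dichotomy that a geodesic of $X$ either lies in $\partial X$ or meets it only at its endpoints is not obvious a priori (it is essentially the extremality of $\partial X$, which is again part of the Perelman structure theory you are citing). These are precisely instances of the ``fine local structure near the boundary'' you correctly flag as the real content.
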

\begin{rem}\label{rem_strictlyConcave}
	For $ \kappa>0 $ the function $ \sin(\sqrt{\kappa}\dist_{\partial X}) $ is, in particular, strictly concave on $ X\setminus \partial X $ but $ \dist_{\partial X} $ itself is not. Along any geodesic $ c $ that minimizes the distance of some point $ p\in X\setminus \partial X $ to the boundary, the composition $ \dist_{\partial X} \circ~c $ is linear. For illustration, take a geodesic connecting the north pole to the equator on the closed northern hemisphere of $ \IS^2 $. Note that in \cite[6.1]{P2} the boundary distance function $ \dist_{\partial X} $ is called ``strictly convex'' (or ``strictly concave'' when cited in \cite{ABConvex} or \cite{W}) which, hence, does not correspond to our notion of a strictly concave function. 
\end{rem}
We retain the following as a direct consequence of Proposition~\ref{prop_distConcave}.
\begin{cor}\label{cor_distBoundUniqMax}
	Let $ X $ be a finite-dimensional Alexandrov space of curvature $ \geq \kappa > 0 $ with non\-empty boundary and $ c:[0,L]\to X\setminus \partial X $ a nontrivial geodesic, then $ \dist_{\partial X}\circ~c  $ attains a unique maximum.
\end{cor}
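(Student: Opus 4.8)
The plan is to upgrade the concavity of $\dist_{\partial X}$ along $c$ to the statement that $\dist_{\partial X}\circ c$ is constant on no nondegenerate subinterval, which for a concave function is equivalent to having a unique maximum. After a linear reparametrization --- which changes neither the hypothesis nor the conclusion --- I may assume $c$ is unit-speed. Put $h:=\dist_{\partial X}\circ c$; since $c$ takes values in $X\setminus\partial X$ we have $h>0$ on $[0,L]$. As curvature $\geq\kappa>0$ implies curvature $\geq 0$, Proposition~\ref{prop_distConcave}(i) shows $\dist_{\partial X}$, and hence $h$, is concave; being continuous on the compact interval $[0,L]$, $h$ attains its maximum $M>0$ on a nonempty closed subinterval $[a,b]\subseteq[0,L]$. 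It thus suffices to prove $a=b$; suppose instead $a<b$, so $h\equiv M$ on $[a,b]$.

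If $\dim X\geq 2$, then Lemma~\ref{lem_distBoundaryBound} gives $M\leq D_\kappa/2<\pi/\sqrt\kappa$, so $v:=\sin(\sqrt\kappa M)>0$. By Proposition~\ref{prop_distConcave}(ii) the function $\sin(\sqrt\kappa\dist_{\partial X})$ is $\bigl(-\kappa\sin(\sqrt\kappa\dist_{\partial X})\bigr)$-concave, so $u:=\sin(\sqrt\kappa h)$ satisfies $u''\leq-\kappa u$ on $[0,L]$ in the barrier sense, while $u\equiv v$ on $[a,b]$. Picking $t_0\in(a,b)$ and a smooth $\psi\colon[0,L]\to\IR$ with $u\leq\psi$, $\psi(t_0)=v$ and $\psi''(t_0)\leq-\kappa v<0$, the inequalities $\psi\geq v$ on $[a,b]$ and $\psi(t_0)=v$ make $t_0$ an interior local minimum of $\psi$, forcing $\psi''(t_0)\geq 0$ --- a contradiction. (Equivalently: $u$ is concave but affine on no subinterval, hence strictly concave, so $h=\tfrac1{\sqrt\kappa}\arcsin u$ has a unique maximum; this is the strict concavity of $\sin(\sqrt\kappa\dist_{\partial X})$ recorded in Remark~\ref{rem_strictlyConcave}.) If $\dim X=1$, then by Lemma~\ref{lem_factsAS}~\ref{lem_OneDimAS} together with $\partial X\neq\emptyset$, $X$ is isometric to a compact interval $[0,\ell]$ with $\ell\in(0,\infty)$ or to $[0,\infty)$, and the isometry carries $\dist_{\partial X}$ to $x\mapsto\min\{x,\ell-x\}$, respectively to $x\mapsto x$. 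Each of these functions attains every value at most twice, whereas $c|_{[a,b]}$ is a nontrivial, hence injective, geodesic; so $h$ cannot be constant on $[a,b]$, again a contradiction. In all cases $a=b$.

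The only point requiring care --- and the main obstacle --- is that Proposition~\ref{prop_distConcave}(ii) provides \emph{strict} concavity of $\sin(\sqrt\kappa\dist_{\partial X})$ only where this function is positive, i.e.\ where $\dist_{\partial X}<\pi/\sqrt\kappa$. Securing this inequality is precisely what Lemma~\ref{lem_distBoundaryBound} achieves when $\dim X\geq 2$, which is also why the one-dimensional case, lacking such a bound, has to be handled by the short separate argument above. The remaining work is the routine translation of ``$\leq$ in the barrier sense'' into the local behaviour of smooth upper supports.
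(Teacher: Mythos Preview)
Your proof is correct and follows essentially the same approach as the paper: split off the trivial low-dimensional case, use Lemma~\ref{lem_distBoundaryBound} to ensure $\sin(\sqrt{\kappa}\,h)$ stays positive, and derive a contradiction from the $(-\kappa f)$-concavity of $f=\sin(\sqrt{\kappa}\,h)$ on any subinterval where $h$ is constant. The only differences are cosmetic: you spell out the one-dimensional case and the barrier argument explicitly, whereas the paper dismisses the former as obvious and phrases the latter via $m:=\min f>0$ and $f''\le -\kappa m$.
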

\begin{proof}
	This is obvious for zero- and one-dimensional Alexandrov spaces (see Lemma~\ref{lem_factsAS}~\ref{lem_OneDimAS}). Assume $ \dim X \geq 2 $ and set $ h:=\dist_{\partial X}\circ~c $ and $ f:=\sin(\sqrt{\kappa}~h) $. The distance $ h $ to the closed set $ \partial X $ is positive and is bounded by $ D_\kappa/2 $ (see Lemma~\ref{lem_distBoundaryBound}). Hence, we have $ m:=\min f >0 $ and, by Proposition~\ref{prop_distConcave}~(ii),
	\begin{equation}\label{eq_fkconvex}
		f''\leq -\kappa f \leq -\kappa m \quad\text{ (in the barrier sense)}
	\end{equation} 
	with $ -\kappa m<0 $.
	Now, assume $ h $ attains its maximum at two distinct points. Then by concavity of $ h $ (Proposition~\ref{prop_distConcave}~(i)), it is constant between those two points. But then $ f $ is also constant on that subinterval which is a contradiction to \eqref{eq_fkconvex}. 
\end{proof}
We summarize the properties of the superlevel sets of the soul construction (see also Figure~\ref{fig_soulSuperlevelSets}). %The statements (i) and (ii) are not necessary for the existence of the soul but will be helpful when proving Theorem~\ref{thm_injSoul}.
\begin{lem}\label{lem_soulSuperlevelSets}
	Let $ X $ be an $ n $-dimensional Alexandrov space of non\-negative curvature. If $ X $ is not compact (resp.\ compact with non\-empty boundary), then there exists $ a\leq 0 $ and an interval $ I=[a,\infty) $ (resp.\ $ I=[a,0] $) and a family $ \{C_t:t\in I \} $ of non\-empty, compact and totally convex subsets $ C_t\subseteq X $ which satisfy:
	\begin{enumerate}[label=(\roman*)]
		\item If $ t_1,t_2\in I $ with $ t_1\leq t_2  $, then $$ C_{t_1} = \{p\in C_{t_2}:d(p,\partial C_{t_2})\geq t_2-t_1\}, $$ in particular $ C_{t_1}\subseteq C_{t_2} $.
		\item $ \bigcup\limits_{t\in I} C_t = X $.
		\item \label{lem_CtCont}The map $ t\mapsto C_t $ is continuous w.r.t.\ the Hausdorff distance.
		\item $ \dim C_{a} < \dim X $.
	\end{enumerate} 
\end{lem}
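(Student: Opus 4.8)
The plan is to run Perelman's soul construction and then read off the four properties. It has two steps: a \emph{half-space (Busemann) step} producing a compact totally convex set $C_0$, followed by one \emph{erosion step} by distance to the boundary, which is what forces the dimension to drop in (iv). I describe the noncompact case; in the compact-with-boundary case the half-space step is replaced by setting $C_0:=X$.

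\textbf{Half-space step.} Fix $p_0\in X$ and, for $t\ge 0$, put
\[
	C_t := \bigcap_{c}\, b_c^{-1}\!\left([-t,\infty)\right),
\]
the intersection over all unit-speed rays $c$ issuing at $p_0$. Each $b_c$ is concave (Lemma~\ref{concaveBusemann}), so each half-space $b_c^{-1}([-t,\infty))$ is totally convex (Lemma~\ref{lem_concaveSuperLevelSets}), and an arbitrary intersection of totally convex sets is totally convex; also $p_0\in C_0\subseteq C_t$ because $b_c(p_0)=0$. Since $b_c(q)=\lim_s\bigl(d(q,c(s))-s\bigr)\ge -d(p_0,q)$ by the triangle inequality, every $q$ lies in $C_{d(p_0,q)}$, giving (ii) on $[0,\infty)$. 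Next, $C_0$ is compact: it is closed and (being totally convex) convex, hence a finite-dimensional Alexandrov space of nonnegative curvature (Lemma~\ref{lem_factsAS}), so if it were unbounded it would be noncompact and contain a ray $r$ from $p_0$ (Lemma~\ref{lem_rayExistence}), whence $b_r(r(s))=-s$, contradicting $r(s)\in C_0\subseteq b_r^{-1}([0,\infty))$. Compactness of the $C_t$ with $t>0$ is the one point here needing a real argument: if $C_t$ were unbounded it would contain a ray $\rho$, and a co-ray from $p_0$ asymptotic to $\rho$ — produced by the limiting argument in the proof of Lemma~\ref{lem_rayExistence} — would drive the Busemann function of that co-ray to $-\infty$ along $\rho$, a contradiction.

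\textbf{Erosion step and properties (i)--(iii).} Now $C_0$ is compact, totally convex, and an Alexandrov space of nonnegative curvature in its own right. If $\partial C_0=\emptyset$, set $a:=0$ and stop. Otherwise $\dist_{\partial C_0}$ is continuous and concave (Proposition~\ref{prop_distConcave}), so it attains a maximum $m>0$; set $a:=-m$ and, for $t\in[a,0]$,
\[
	C_t := \bigl\{\, p\in C_0 : \dist_{\partial C_0}(p)\ge -t \,\bigr\}.
\]
These superlevel sets are totally convex in $C_0$, hence in $X$ by transitivity (a local geodesic of $X$ between two of their points lies in $C_0$, hence is a local geodesic of $C_0$, hence stays in the superlevel set), closed in the compact $C_0$, and contain the nonempty maximum set. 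This produces $\{C_t\}$ on $I=[a,\infty)$ (resp.\ $[a,0]$) with $a\le 0$. Then (ii) holds, and (iii) follows from compactness of $C_0$ together with the fact that $\dist_{\partial C_0}$ and the Busemann functions are $1$-Lipschitz, so their superlevel sets vary Hausdorff-continuously, the two descriptions agreeing at $t=0$. Property (i) is, within each phase, the obvious inclusion of superlevel sets once one checks that $\partial C_{t_2}$ is the relevant level set of the defining function and that $\dist_{\partial C_{t_2}}$ exceeds it by the additive constant $t_2$; I expect the fiddliest bookkeeping here to be matching the two phases across $t=0$ and, when $\partial X\ne\emptyset$, arranging that $C_0$ and the smaller sets stay away from $\partial X$ so that the boundary in (i) is the one cut out by the erosion rather than $\partial X$ itself.

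\textbf{The hard part, property (iv).} The genuinely nontrivial point is $\dim C_a<\dim X$. If $\partial C_0\ne\emptyset$ this is the statement that the maximum set $A:=\{\dist_{\partial C_0}=m\}$ has dimension strictly below $\dim C_0\ (\le\dim X)$; I would argue that $A$ is closed and convex, so if $\dim A=\dim C_0$ then by Remark~\ref{rem_dimOpen} it contains a set $U$ open in $C_0$ on which $\dist_{\partial C_0}\equiv m$, whereas a minimizing geodesic $c$ in $C_0$ from a point $p\in U$ to a nearest point of $\partial C_0$ satisfies $\dist_{\partial C_0}(c(s))=m-s$ by the triangle inequality, hence is strictly decreasing — contradicting constancy near $p$. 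If instead $\partial C_0=\emptyset$ and $X$ is noncompact, then $C_0\subsetneq X$, so the compact $C_0$ has nonempty topological boundary in the connected space $X$, and a topological boundary point $p$ of an $n$-dimensional convex subset has space of directions a proper closed convex subset of $\Sigma_pX$ and hence nonempty Alexandrov boundary, so $\dim C_0=n$ would contradict $\partial C_0=\emptyset$; thus $\dim C_0<n$ and $a=0$. The conceptual weight of (iv) therefore rests on Perelman's concavity of the boundary-distance function (Proposition~\ref{prop_distConcave}) and the local structure theory behind Remark~\ref{rem_dimOpen}; the remaining work — the co-ray argument for compactness of the $C_t$ with $t>0$, and the matching needed for (i) — is technical rather than conceptual.
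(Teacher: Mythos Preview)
Your two-phase construction (Busemann for $t\ge 0$, then erosion by $\dist_{\partial C_0}$ for $t<0$) is not how the paper does it: there a \emph{single} concave function $f$ is chosen once and for all --- $f=\inf_c b_c$ in the noncompact case, $f=\dist_{\partial X}$ in the compact case --- and $C_t:=f^{-1}([-t,\infty))$ for all $t\in I$, with $a=-\max f$. The key identity the paper establishes first is $\partial C_t=f^{-1}(\{-t\})$, and property~(i) is then a direct computation with $f$. In particular, that identity shows your two phases actually produce the \emph{same} sets (your erosion sets are exactly the paper's Busemann superlevel sets for $t<0$), so the ``matching across $t=0$'' you flag as fiddly is automatic; conversely, your presentation obscures this and leaves (i) unproved across the phase boundary.

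There is a genuine gap in your argument for (iii). The $1$-Lipschitz property of $f$ does \emph{not} imply Hausdorff continuity of superlevel sets: take $X=[0,2]$, $f(x)=\max(-x,-1)$, so $C_t=[0,t]$ for $t<1$ but $C_1=[0,2]$, and $d_H(C_{1-\varepsilon},C_1)\to 1$. Right-continuity follows from mere continuity of $f$ and compactness, but left-continuity needs concavity: the paper joins a point of $C_t$ to a point of $C_a$ by a geodesic and uses concavity of $f$ along it to show $f$ strictly increases, forcing the geodesic to enter $C_{t-\delta}$ immediately.

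Your treatment of (iv) splits into a case $\partial C_0=\emptyset$ that you handle by asserting that a topological boundary point of a full-dimensional convex subset has space of directions a proper convex subset of $\Sigma_pX$, ``hence nonempty Alexandrov boundary''. That last implication is a nontrivial structural fact you have not justified. In fact the case does not arise: since $b(p_0)=0$, the level-set identity gives $p_0\in\partial C_0$ (when $\dim C_0\ge 1$). The paper avoids the issue entirely by reducing the noncompact case to the compact one via (i): set $X':=C_t$ for some $t>a$, note $\partial C_t=f^{-1}(\{-t\})\neq\emptyset$, and apply the compact argument to $X'$; by (i) the resulting maximum set is exactly $C_a$.

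A minor point: your co-ray argument for compactness of $C_t$ with $t>0$ is unnecessary. Since $p_0\in C_0\subseteq C_t$ and $C_t$ is closed convex, noncompactness would give a ray $c$ in $C_t$ \emph{from $p_0$}, and then $b(c(s))\le b_c(c(s))=-s<-t$ for $s>t$ already contradicts $c(s)\in C_t$.
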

\begin{figure}[h]
	\centering
	\includegraphics[width=0.7\linewidth]{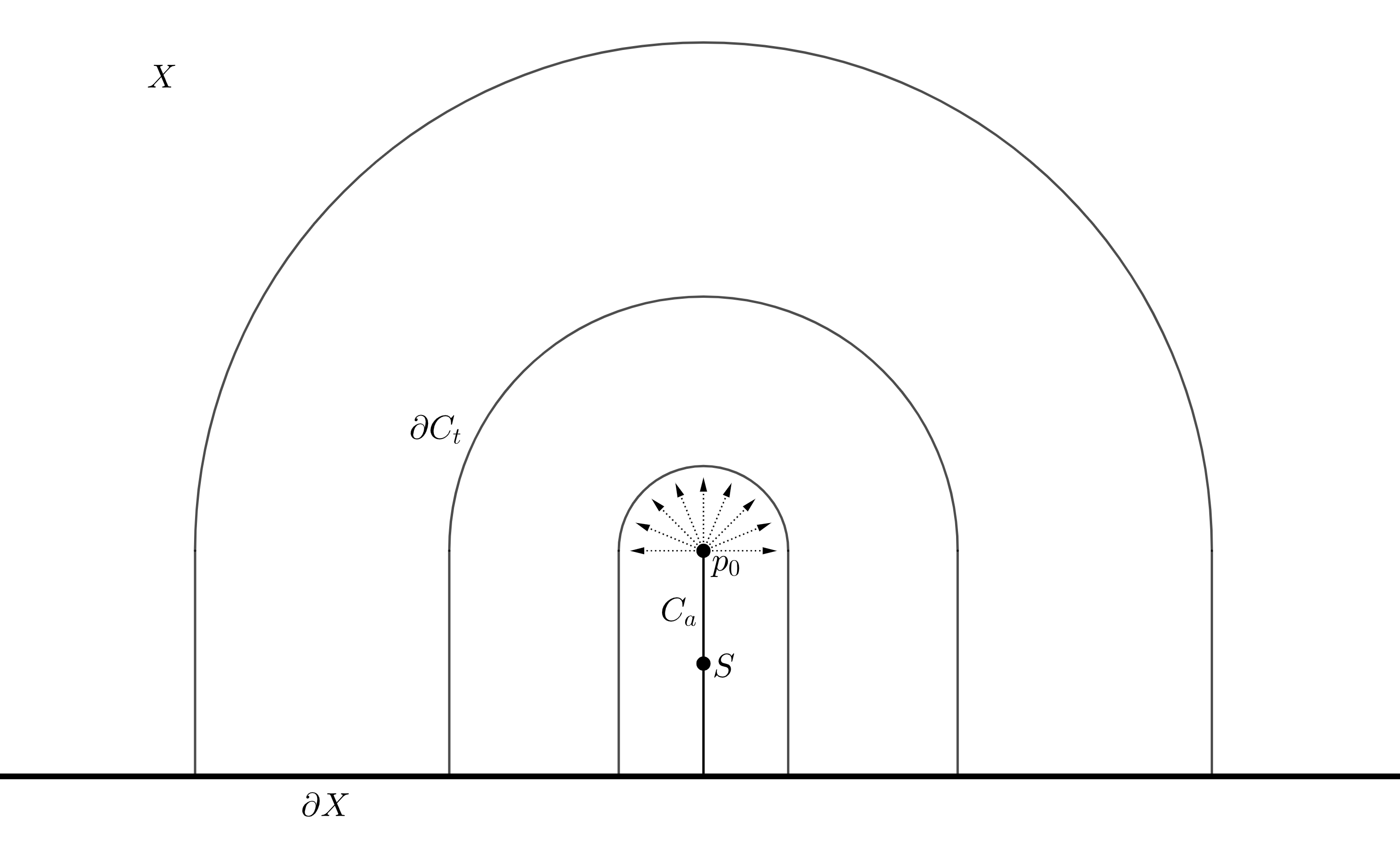}
	\caption{The soul $ S $ and the sets $ C_t $ from Lemma~\ref{lem_soulSuperlevelSets} for a closed half-plane $ X $ when starting in $ p_0 $. Some directions of rays issuing in $ p_0 $ are indicated}\label{fig_soulSuperlevelSets}
\end{figure}
\begin{proof}
	The following proofs are largely analogous to the Riemannian case and of elementary character. The description in (i) was already formulated in \cite{CG}. Continuity of the map $ t\mapsto C_t $ in (iii) was treated in \cite[\S 2]{S}.
	
	We assume that $ X $ is not compact. The compact case follows mostly analogously and is sometimes simpler with respective differences addressed in brackets.
	Fix some point $ p_0\in X $. Then there exists at least one ray emanating from $ p_0 $ by Lemma~\ref{lem_rayExistence}. We consider the set $ R_{p_0} $ of all such unit-speed rays and define the infimum over all the corresponding Busemann-functions 
	\begin{align}\label{eq_busemannDef}
		b&:X\to\IR,\ p\mapsto \inf_{c\in R_{p_0}} b_c(p) = \inf_{c\in R_{p_0}} \lim_{s\to\infty} d(p,c(s))-s.
	\end{align}
	Estimating $ b(p)\geq -d(p,p_0) $, we see that $ b $ is well-defined. 
	Now set $ f:=b $ (resp.\ $ f:=\dist_{\partial X} $) and define for $ t\in \IR $ the superlevel set
	\begin{align*} 
		C_t&:=f^{-1}([-t,\infty)).
	\end{align*}  
	By Proposition~\ref{concaveBusemann} (resp.\ Proposition~\ref{prop_distConcave}), the function $ f $ is an infimum over concave functions and thus itself concave. Fix $ t\in\IR $. By concavity of $ f $, the superlevel set $ C_t $ is totally convex. If $ X $ is compact, then the closed subset $ C_t $ is clearly compact itself. Otherwise, assume $ C_t $ would not be compact for some $ t\in\IR $. Then Lemma~\ref{lem_rayExistence} gives a unit-speed ray $ c $ emanating from $ p_0 $ with image in $ C_t $. But $ b(c(t+1)) \leq b_c(c(t+1))=-t-1<-t $ and hence $ c(t+1) $ cannot be contained in $ C_t $, a contradiction. We conclude that $ C_t $ is compact.	
	The set $ C_t $ could be empty but $ f $ is continuous and has compact superlevel sets and therefore assumes a maximal value $ -a $. This allows us to set $ I:=[a,\infty) $ (resp.\ $ I:=[a,0] $) and obtain the family $ (C_t)_{t\in I} $ of non\-empty, compact and totally convex subsets.

	%%%%part (i)
	(i) We present the proof for noncompact $ X $. The compact case follows similarly where instead of points $ c(s) $ on rays we directly consider closest points on the boundary. 
	We first show 
	\begin{equation}\label{eq_levelsetBoundary}
		f^{-1}(\{-t\}) = \partial C_t.
	\end{equation}
	By continuity of $ f $, we have $ \partial C_t\subseteq f^{-1}(\{-t\}) $. To show the converse inclusion let $ p\in f^{-1}(\{-t\}) $. Assume $ p $ is an inner point of $ C_t $, then there exists $ \varepsilon>0 $ such that $ B_\varepsilon(p)\subseteq C_t $. The equality $ f(p)=-t $ allows us to choose a unit-speed ray $ c $ issuing in $ p_0 $ and $ s>0 $ such that $ d(p,c(s))-s\leq -t + \frac{\varepsilon}{4} $. Then there is a geodesic from $ p $ to $ c(s) $ and we consider the point $ q $ on this geodesic with distance $ \frac{\varepsilon}{2} $ to $ p $. Recall that $ d(q,c(s))-s $ is non\-increasing in $ s $ and thus not smaller than $ b_c(q) $. We then have 
	\begin{align*}
		f(q)\leq b_c(q)&\leq d(q,c(s))-s\\
		&=d(p,c(s))-s-\frac{\varepsilon}{2}\\
		&\leq -t - \frac{\varepsilon}{4} < -t,
	\end{align*}
	a contradiction to $ q\in C_t $. Therefore, \eqref{eq_levelsetBoundary} follows.
	
	Now let $ t_1,t_2\in I $ with $ t_1\leq t_2 $ and $ p\in C_{t_1} $. We show that $ d(p,\partial C_{t_2}) \geq t_2-t_1 $. The reverse triangle inequality yields for all $ r\in\partial C_{t_2} $, all rays $ c\in R_{p_0} $ and all $ s>0 $ the inequality
	\begin{align*}
		d(p,r) &\geq d(p,c(s)) - d(c(s),r)\\
		&=d(p,c(s)) - s -  (d(c(s),r)-s)\\
		&\geq b_c(p) - (d(c(s),r)-s)\\
		&\geq -t_1 - (d(c(s),r)-s),
	\end{align*} 
	and hence $ d(p,r)\geq -t_1-f(r) = t_2-t_1 $. Since this holds for all $ r\in\partial C_{t_2} $, we obtain $ d(p,\partial C_{t_2})\geq t_2-t_1 $. Let conversely $ p\in C_{t_2} $ with $ d(p,\partial C_{t_2})\geq t_2-t_1 $. Let $ c\in R_{p_0} $, $ s\geq\max\{0,-t_2\} $ and consider a geodesic from $ p $ to $ c(s+t_2) $. Since $ b(c(s+t_2))\leq b_c(c(s+t_2))=-s-t_2\leq -t_2 $, this geodesic must cross $ \partial C_{t_2} $ at some point. Let $ r_s $ denote that crossing point. For all sufficiently large $ s $,
	\begin{align*}
		d(p,c(s+t_2))-(s+t_2) &= d(p,r_s) + d(r_s,c(s+t_2)) - (s+t_2)\\
		&\geq d(p,\partial C_{t_2}) + b_c(r_s)\\
		&\geq t_2-t_1 + f(r_s)\\
		& = -t_1,
	\end{align*} 
	from which we obtain $ b_c(p)\geq -t_1 $ for all $ c\in R_{p_0} $. Consequently, $ f(p)\geq -t_1 $ and in particular $ p\in C_{t_1} $.
	
	%%%%part (ii)
	(ii) This follows immediately from $ \bigcup\limits_{t\in I} C_t = f^{-1}(\IR) = X $.
	
	%%%%part (iii)
	(iii) Fix $ t\in I $ and $ \eps>0 $. Let us denote the $ \eps $-neighborhood of a subset $ A\subseteq X $ by $ U_\eps(A):=\{p\in X: d(p,A)<\eps\} $. We have to show that there exists $ \delta>0 $ such that $ C_{t+\delta}\subseteq U_{\eps}(C_t) $ (if $ I=[0,\infty) $ or $ t<0 $) and $ C_t\subseteq U_{\eps}(C_{t-\delta}) $ (if $ t>a $).\\
	The former relation follows just from continuity of $ f $. To see this, assume the contrary. Then there exists a sequence $ (p_n)_{n\in \IN} $ such that $ p_n\in C_{t+1/n} $ and $ d(p_n,C_t)\geq \eps $ (if $ I=[a,0] $, we may start the sequence at $ n $ large enough such that $ t+\frac1n<0 $). By compactness of $ C_{t+1} $ (resp.\ $ X $), there exists a limit $ p $ of a convergent subsequence $ (p_{n_k})_{k\in\IN} $. We have $ f(p_n)\geq -t-\frac1n $ and, by continuity, $ f(p)\geq -t $, i.e.\ $ p\in C_t $. A contradiction to $ d(p_{n_k},C_t)\geq \eps $. 
	%Furthermore, for sufficiently large $ k\in\IN $, we have $ d(p,p_{n_k})<\frac{\eps}{2} $. Hence, $$ d(p,C_t)\geq d(p_{n_k},C_t)-d(p,p_{n_k})>\frac{\eps}{2}, $$ a contradiction to $ p\in C_t $.
	
	To prove the second relation, assume on the contrary that for all $ \delta>0 $ the subset $ C_t\setminus U_\eps(C_{t-\delta}) $ is nonempty. Let $ p $ be a limit point of a sequence of points $ p_n\in C_t\setminus U_\eps(C_{t-\frac1n})  $ (start sequence at sufficiently large $ n $ such that $ t-\frac1n >a $). Continuity of $ f $ and the relation $ d(p,C_{t-\delta})\geq \eps $ for all $ \delta>0 $ imply $ p\in\partial C_t $. Consider a unit-speed geodesic $ c:[0,L]\to X $ joining $ p $ to some point in $ C_a $. Using \[f(c(0)) = -t < -a = f(c(L))\] and concavity of $ f\circ c $, we obtain for all $ s\in(0,L] $ that $ f(c(s))>-t $, that is, $ \delta_s:=d(c(s),\partial C_t))>0 $ and $ c(s)\in C_{t-\delta_s} $. For $ s\in(0,\eps) $ this yields a contradiction with $ s = d(p,c(s)) \geq d(p,C_{t-\delta_s})\geq \eps. $
	%Then there exists a sequence $ (p_n)_{n\in \IN} $ such that $ p_n\in C_t $ and $ d(p_n,C_{\max\{a,t-1/n\}})\geq \eps $. Again, there exists a limit $ p $ of a subsequence $ (p_{n_k})_{k\in\IN} $ that satisfies $ d(p,C_{t-1/n_k})\geq \eps $ for sufficiently large $ k\in\IN $. It follows $ p\in \partial C_t $ and we can consider a geodesic $ c:[0,L]\to X $ joining $ p $ to $ C_a $ (we assumed $ t>a $). We have $ f(c(0))=f(p)=-t $ and $ f(c(L))=-a $ and by $ -a>-t $ and concavity of $ f\circ c $, we have $ f(c(s))>-t $ for all $ s\in(0,L] $. If $ c(\eps/2) $ is not defined, choose $ \eps $ sufficiently small. Then $ q:=c(\eps/2) $ satisfies $ d(q,p)=\eps/2 $ and $ q\in C_{t-\Delta} $ with $ \Delta:=d(q,\partial C_t)>0 $ since $ \partial C_t $ is compact. A contradiction to $ d(p,C_{t-\Delta})\geq \eps $.
		
	%%%%part (iv)
	(iv) Assume that $ X $ is compact. As a closed and convex subset of $ X $, the set $ C_a $ is again a finite-dimensional Alexandrov space of non\-negative curvature. Assume that $ C_{a} $ has dimension $ n:=\dim X $. Then by Remark~\ref{rem_dimOpen}, there exists an open neighborhood $ U $ in $ X $ of some point $ p\in C_a $ which is contained in $ C_{a} $. Since $ \partial X $ is closed (\cite[4.6]{P2}), we can consider the shortest geodesic $ c $ from $ p $ to $ \partial X $. Starting at the inner point $ p $, this geodesic must pass through some ball $ B_\varepsilon(p)\subseteq U \subseteq C_{a}  $ with $ \varepsilon>0 $. Hence, the distance function $ t\mapsto d(c(t),\partial X) $ is constant with value $ -a $ on $ [0,\varepsilon) $ but it should decrease by the choice of $ c $, a contradiction. We conclude that $ \dim C_{a}<\dim X $. If $ X $ is not compact, let $ t>a $. Setting $ X':=C_t $, we apply the compact version of this Proposition. Then (i) implies that the obtained maximum level set $ C'_{a'} $ coincides with $ C_{a} $ and the previous argument ensures $ \dim C_{a}<\dim C_{t}\leq\dim X. $
\end{proof}

\begin{prop}\label{prop_soul}
	\text{\normalfont(\cite[6.2]{P2})}. Every finite-dimensional Alexandrov space of non\-negative curvature contains a soul.
\end{prop}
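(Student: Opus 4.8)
The plan is to run a finite induction on the dimension, with Lemma~\ref{lem_soulSuperlevelSets} serving as the engine that strictly lowers the dimension while keeping compactness and total convexity. First I would dispose of the trivial case: if $X$ is compact with empty boundary, then $X$ is itself a nonempty, compact, totally convex (trivially) subset of $X$ without boundary, hence a soul, and nothing more is needed.

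So suppose instead that $X$ is noncompact, or compact with nonempty boundary. Then the appropriate form of Lemma~\ref{lem_soulSuperlevelSets} supplies a nonempty, compact, totally convex subset $C_a\subseteq X$ with $\dim C_a<\dim X$. As a closed convex subset, $C_a$ is again a finite-dimensional Alexandrov space of nonnegative curvature by Lemma~\ref{lem_factsAS}~(iii), and it is compact. If $\partial C_a=\emptyset$, then $C_a$ is a soul of $X$ and we are done. Otherwise $C_a$ is compact with nonempty boundary, and I would apply the compact form of Lemma~\ref{lem_soulSuperlevelSets} \emph{inside} $C_a$ to obtain a nonempty, compact subset $C'\subseteq C_a$ that is totally convex in $C_a$ and satisfies $\dim C'<\dim C_a$. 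The one point requiring comment is that $C'$ is then totally convex in $X$, not merely in $C_a$: a local geodesic of $X$ joining two points of $C'$ stays in $C_a$ by total convexity of $C_a$ in $X$, and since $C_a$ is convex its induced length metric agrees locally with $d|_{C_a}$, so that curve is a local geodesic of $C_a$; total convexity of $C'$ in $C_a$ then forces its image into $C'$. Hence total convexity propagates through the whole chain. Repeating this step produces a decreasing sequence of nonempty, compact subsets of $X$, each totally convex in $X$, with strictly decreasing dimensions.

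Since the dimension is a nonnegative integer and drops by at least one at each step, the process terminates after finitely many steps. It can only stop at a set with empty boundary — and it cannot continue past dimension zero, since a zero-dimensional Alexandrov space is a single point, which has empty boundary by definition. The terminal set is therefore a nonempty, compact, totally convex (in $X$) subset of $X$ without boundary, i.e.\ a soul. I do not expect a genuine obstacle here: all the analytic content — concavity of the Busemann and boundary-distance functions, compactness of the superlevel sets, and the crucial dimension drop — is already packaged into Lemma~\ref{lem_soulSuperlevelSets}. The only new bookkeeping is the transitivity of total convexity under iteration and the verification that the induction bottoms out correctly at dimension zero, and neither is hard.
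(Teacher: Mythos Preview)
Your proposal is correct and follows essentially the same route as the paper: invoke Lemma~\ref{lem_soulSuperlevelSets} to produce a nonempty compact totally convex subset of strictly smaller dimension, iterate inside that subset, use transitivity of total convexity to keep the chain totally convex in $X$, and terminate after finitely many steps either at a boundaryless set or at a singleton. The paper's proof is terser (it simply asserts ``transitivity of total convexity'' where you justify why a local geodesic of $X$ lying in $C_a$ is a local geodesic of $C_a$), but the structure and ingredients are identical.
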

\begin{proof}
	Let $ X $ be a finite-dimensional Alexandrov space of non\-negative curvature. If $ X $ is not compact, we apply Lemma~\ref{lem_soulSuperlevelSets} to get a non\-empty, compact and totally convex subset $ X_1:=C_{a} $, otherwise we set $ X_1:=X $.
	Iteratively, we set $ X_{n+1} $ to be the set $ C_{a} $ obtained from applying Lemma~\ref{lem_soulSuperlevelSets} to the compact space $ X_n $ as long as $ X_{n} $ has boundary and $ \dim X_n\geq 1 $. The obtained set $ X_{n+1} $ is again a non\-empty, compact and totally convex subset of $ X_n $. This is again an Alexandrov space of non\-negative curvature and qualifies for further application of Lemma~\ref{lem_soulSuperlevelSets}. 
	%Any local geodesic in $ X $ joining two points $ p,q\in X_{n+1}\subseteq X_n\subseteq X $ is contained in the totally convex subset $ X_n $. Since $ X_{n+1} $ is totally convex in $ X_n $, the local geodesic is also contained in $ X_{n+1} $. Hence, $ X_{n+1} $ is totally convex in $ X $. 
	By transitivity of total convexity, $ X_{n+1} $ is a totally convex subset not only of $ X_n $ but also of $ X $. 
	%The totally convex subset $ X_{n+1} $ of $ X_n $ which is a totally convex subset of $ X $ is also totally convex in $ X $. 
	Furthermore, we have $ \dim X_{n+1}<\dim X_n $ and thus obtain after a finite number of steps either a totally convex singleton or a non\-empty, compact and totally convex subset without boundary. In both cases we obtained a soul of $ X $.
\end{proof}
\begin{defn}
	We call a soul obtained by the preceding construction a \textit{Cheeger-Gromoll soul}.
\end{defn}
The soul of a positively curved space is given by the following.
\begin{prop}[\text{\cite[6.2]{P2}}]\label{prop_soulPoint}
	Let $ X $ be a finite-dimensional Alexandrov space of positive curvature which is noncompact or has nonempty boundary. Then each Cheeger-Gromoll soul of $ X $ consists of a single point.
\end{prop}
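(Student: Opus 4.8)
The plan is to run through the soul construction of Proposition~\ref{prop_soul} and show that a single superlevel-set step already collapses $X$ to a point. Two preliminary remarks are needed. First, every closed convex subset $A\subseteq X$ is again an Alexandrov space of curvature $>0$: by Lemma~\ref{lem_factsAS} closed convex subsets inherit any lower curvature bound available on a neighbourhood in $X$, and Definition~\ref{def_posCurv} is a local, pointwise condition. Second, a \emph{compact} Alexandrov space $Y$ of curvature $>0$ is in fact CBB($\kappa$) for some $\kappa>0$: cover $Y$ by finitely many neighbourhoods of curvature $\geq\kappa_{p_i}>0$, set $\kappa:=\min_i\kappa_{p_i}$, and apply the Globalization Theorem \cite[8.31]{AKP_Found} to the geodesic space $Y$ (Lemma~\ref{lem_factsAS}). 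It then suffices to prove: \textbf{(A)} if $Y$ is a compact Alexandrov space of curvature $>0$ with nonempty boundary, the maximal superlevel set $C_a$ of $\dist_{\partial Y}$ from Lemma~\ref{lem_soulSuperlevelSets} is a single point; and \textbf{(B)} if $X$ is a noncompact Alexandrov space of curvature $>0$, the maximal superlevel set $C_a$ of the Busemann infimum $b$ from the proof of Lemma~\ref{lem_soulSuperlevelSets} is a single point. Granting these: if $X$ is noncompact, (B) makes the first set of the construction a point, of dimension $0$ with empty boundary, so the construction stops; if $X$ is compact with nonempty boundary, applying (A) to $Y=X$ makes the first genuine step yield a point, and again it stops; the remaining case, $X$ compact with empty boundary, is excluded by hypothesis. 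In all cases the Cheeger--Gromoll soul is a single point.

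Claim (A) is immediate. The set $C_a$ is totally convex, being a superlevel set of the concave function $\dist_{\partial Y}$ (Proposition~\ref{prop_distConcave} and Lemma~\ref{lem_concaveSuperLevelSets}); since $\dim Y\geq 1$ one has $\max\dist_{\partial Y}>0$, so $C_a\subseteq Y\setminus\partial Y$. If $C_a$ contained two distinct points, the geodesic between them would lie in $C_a$, forcing $\dist_{\partial Y}$ to be constant along a nontrivial geodesic inside $Y\setminus\partial Y$; this contradicts Corollary~\ref{cor_distBoundUniqMax}, which applies since $Y$ is CBB($\kappa$) with $\kappa>0$ by the second remark above.

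Claim (B) is the heart of the matter. Since $C_a$ is totally convex it suffices to show that $b$ is not constant along any nontrivial geodesic. Suppose $b\circ\gamma\equiv m$ for a geodesic $\gamma\colon[0,\ell]\to X$. The set $R_{p_0}$ of unit-speed rays from $p_0$ is compact (a uniform limit of rays is a ray, by Lemma~\ref{lem_factsAS}) and $c\mapsto b_c(q)$ is continuous, so the infimum defining $b$ is attained; at the interior point $\gamma(t_0)$ this gives a ray $c$ with $b_c(\gamma(t_0))=m$, and since $b_c\circ\gamma\geq b\circ\gamma\equiv m$ is concave (Lemma~\ref{concaveBusemann}) and takes the value $m$ at the interior point $t_0$, it is constant: $b_c\circ\gamma\equiv m$. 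Thus a Busemann function is constant along $\gamma$. Now invoke flat-strip rigidity: letting $\rho_t$ be the ray from $\gamma(t)$ asymptotic to $c$, one has $b_c(\rho_t(u))=m-u$, and the $1$-Lipschitz property of $b_c$ yields $d(\rho_t(u),\gamma(s))\geq u=d(\rho_t(u),\gamma(t))$ for all $s$, so each $\rho_t$ meets $\gamma$ orthogonally and realizes $\dist(\cdot,\gamma([0,\ell]))$; by the rigidity case of Toponogov comparison in curvature $\geq 0$ (the mechanism behind the splitting theorem; cf.\ \cite{P2}) the map $(t,u)\mapsto\rho_t(u)$ is a distance-preserving embedding of the flat half-strip $[0,\ell]\times[0,\infty)$. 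But then, for $t_0\in(0,\ell)$, arbitrarily small nondegenerate Euclidean geodesic triangles lie in $X$ around $\gamma(t_0)$, and such a triangle is strictly thinner than its comparison triangle in $M_{\kappa_{\gamma(t_0)}}$ (its median to the midpoint of a side is strictly shorter than the corresponding $M_{\kappa_{\gamma(t_0)}}$-median, as $\kappa_{\gamma(t_0)}>0$), contradicting that a neighbourhood of $\gamma(t_0)$ is CBB($\kappa_{\gamma(t_0)}$). This proves (B).

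I expect the flat-strip rigidity step inside (B) to be the only real obstacle: converting the soft information ``every asymptotic ray is orthogonal to $\gamma$ and distance-realizing'' into an honest isometric flat half-strip. This is the Alexandrov counterpart of the classical fact that Busemann functions of positively curved open manifolds are strictly convex, which underlies the arguments of \v{S}arafutdinov and of Cheeger--Gromoll; everything else is bookkeeping around the soul construction and Corollary~\ref{cor_distBoundUniqMax}.
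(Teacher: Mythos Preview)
Your Claim (A) is correct and is precisely the paper's core argument. Claim (B), however, is an unnecessary detour.

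The paper handles the noncompact case in one line by reducing it to (A). It invokes property (i) of Lemma~\ref{lem_soulSuperlevelSets}: for any $t>a$ one has $C_a=\{p\in C_t:d(p,\partial C_t)\geq t-a\}$, and since $\max_{C_t}\dist_{\partial C_t}=t-a$ this is exactly the maximal superlevel set of $\dist_{\partial C_t}$ on $C_t$. Now $C_t$ is compact with nonempty boundary and inherits positive curvature (your own preliminary remarks apply verbatim), so Claim (A) with $Y=C_t$ and Corollary~\ref{cor_distBoundUniqMax} give that $C_a$ is a single point. No Busemann rigidity is required.

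The gap you yourself flag in (B) is genuine: turning ``every asymptotic ray meets $\gamma$ orthogonally and realizes the distance'' into an isometric flat half-strip is the equality case of Toponogov comparison, of the same depth as the splitting theorem, and would need its own proof in this paper. There is also a smaller issue earlier in (B): the map $c\mapsto b_c(q)=\inf_t\bigl(d(q,c(t))-t\bigr)$ is a pointwise infimum of functions continuous in $c$, hence only upper semicontinuous \emph{a priori}; upper semicontinuity on a compact set guarantees attainment of a \emph{maximum}, not a minimum, so your attainment-of-infimum step needs additional justification. Both problems evaporate once you use property~(i) to reduce to the compact case.
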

\begin{proof}
	Let $ S $ be a Cheeger-Gromoll soul and let $ (C_t)_{t\in I} $ be the superlevel sets from Lemma~\ref{lem_soulSuperlevelSets} that led to $ S $. By property (i), we can assume that $ X $ is compact (if not, set $ X:=C_t $ with $ t>a $) and thus satisfies a positive lower curvature bound. 
	Suppose the set $ C_a $ of maximum distance to the boundary contains two distinct points. By convexity, it must contain a geodesic joining those two points. But, by Corollary~\ref{cor_distBoundUniqMax}, $ \dist_{\partial X} $ assumes a unique maximum on this geodesic, a contradiction. Therefore, $ S=C_a $ consists of a single point.
\end{proof}
Recall that the Soul Theorem \cite[Theorem~2.2]{CG} states that a complete noncompact Riemannian manifold of nonnegative curvature is diffeomorphic to the normal bundle of its soul. 
This does not generalize to Alexandrov spaces in the following sense.\hspace*{7cm}
\begin{wrapfigure}{r}{0.29\textwidth}
	\vspace*{0.7cm}
	\centering
	\includegraphics[width=0.8\linewidth]{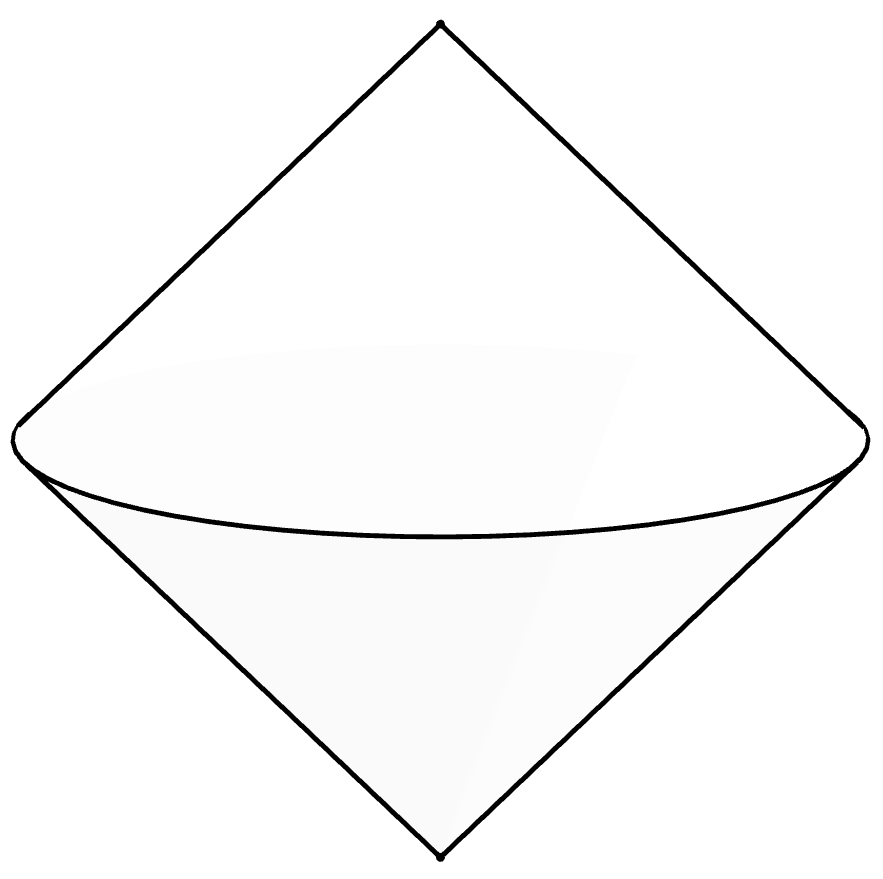}
	\caption{The soul $ S $ in Example~\ref{ex_soulTheoremCounterexample}}\label{fig_exPerelmanSoulDoublingCone}
	\vspace*{-0.2cm}
\end{wrapfigure}
\begin{ex}[\text{\cite[6.4]{P2}}]\label{ex_soulTheoremCounterexample}
	There is a finite-dimensional Alexandrov space of nonnegative curvature without boundary which is not homeomorphic to the total space of a locally trivial fiber bundle over its soul.
	We present the example \cite[6.4]{P2} with real instead of complex projective spaces. Consider $X:=\IR^3/{\sim}  $ where $ x\sim \pm x $ with induced length metric. $ X $ is isometric to the metric cone $ K(\IR P^2) $ over the real projective plane.
	Set $ Y:=\{(x,y,z)\in X: x^2+y^2\leq 1\} $ and consider the doubling $ \overline{Y} $ (see Definition~\ref{def_doubling}). Starting the soul construction in the origin yields the Cheeger-Gromoll soul $ S $ that is the doubling of $ \{(x,y,z)\in X: x^2+y^2\leq 1, z=0\}\subseteq X $ and hence homeomorphic to a sphere $ \IS^2 $ (in fact isometric to the doubling of the closed unit ball in the cone $ K(\IR P^1) $, see Figure~\ref{fig_exPerelmanSoulDoublingCone}). 
	Suppose there exists a projection $ \pi:\overline{Y}\to S $ defining a locally trivial fiber bundle. Then there exist disjoint open balls $ U,V\subseteq S $ with $ \pi(0)\in U $ such that neither $ U $ or $ V $ contain the image $ \pi(\overline{0}) $ of the reflection $ \overline{0} $ of the origin and the open subsets $ \pi^{-1}(U) , \pi^{-1}(V)\subseteq X $ are homeomorphic to the product of some fiber with $ U $ (resp.\ $ V $). Therefore, there exists a homeomorphism $ \varphi: \pi^{-1}(U) \to \pi^{-1}(V)$. It induces an isomorphism between the relative singular homology groups $ H_3(\pi^{-1}(U),\pi^{-1}(U)\setminus \{0\}) $ and $ H_3(\pi^{-1}(V),\pi^{-1}(V)\setminus \{\varphi(0)\}) $ (with $ \IZ $ coefficients). The following computation shows that there are indeed not isomorphic, yielding the desired contradiction.
	By excision, we can replace the open subsets by open balls $ B_U\subseteq \pi^{-1}(U) $ and $ B_V\subseteq \pi^{-1}(V) $ at $ 0 $ and $ \varphi(0) $, respectively. Note that $ B_U $ is contractible and $ B_U\setminus\{0\} $ is homotopy equivalent to $ \partial B_U\approx \IR P^2 $. Similarly, $ B_V $ is contractible, too, but $ B_V\setminus\{\varphi(0)\} $ is homotopy equivalent to $ \partial B_V \approx \IS^2 $. The corresponding long exact homology sequences yield the remaining isomorphisms
	\begin{align*}
		H_3(\pi^{-1}(U),\pi^{-1}(U)\setminus \{0\}) &\approx H_3(B_U,B_U\setminus\{0\}) \approx H_2(\IR P^2) \approx \IZ/2\IZ, \\
		H_3(\pi^{-1}(V),\pi^{-1}(V)\setminus \{\varphi(0)\}) &\approx H_3(B_V,B_V\setminus \{\varphi(0)\}) \approx H_2(\IS^2) \approx \IZ.
	\end{align*}
	Another formulation of the above is the following. Unlike for Riemannian manifolds, the (local) topology of an Alexandrov space is not determined by its soul. The  manifold $ S\times \IR $ has the same (isometric) Cheeger-Gromoll soul $ S $ as the ``non-manifold'' $ \overline{Y} $. That $ \overline{Y} $ is not a manifold can be proved by a computation of relative homology groups analogous to the above.
\end{ex}
Nonetheless, a weaker version of the Soul Theorem generalizes to Alexandrov spaces. Namely, the soul is a strong deformation retract of the whole space. This is described in the next section.
\subsection{\v{S}arafutdinov's Retraction}
In \cite{S} a non-expanding retraction of a Riemannian manifold onto its soul was found. It was generalized to Alexandrov spaces in \cite[6.3]{P2} and sparked a subsequent study of gradient flows in Alexandrov spaces which give a modern formulation of this retraction. We refer to \cite{PP,PetrGradFlow} and \cite[ch.\ 16]{AKP_Found}.
\begin{prop}[\text{\cite[6.3]{P2}} \v{S}arafutdinov's Retraction]\label{prop_sarafutdoinovsRetraction}
	Let $ X $ be a finite-dimensional Alexandrov space of non\-negative curvature and $ S $ be a Cheeger-Gromoll soul of $ X $. Then there exists a continuous map $ F:X\times [0,1]\to X $ that satisfies for all $ t\in [0,1],\ x\in X,\ s\in S $, 
	\[ F(x,0)=x,\quad F(s,t)=s,\quad F(x,1)\in S, \] 
	that is, $ S $ is a strong deformation retract of $ X $. Furthermore, $ F $ is non-expanding, that is, it satisfies for all $ p,q\in X $ and $ 0\leq t_1 \leq t_2 \leq 1 $ the inequality
\begin{align*}
	d(F(p,t_2),F(q,t_2)) \leq d(F(p,t_1),F(q,t_1)).
\end{align*}
\end{prop}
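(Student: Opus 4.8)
The plan is to build $F$ as a finite concatenation of gradient flows of the concave functions that occur in the soul construction. Recall from the proof of Proposition~\ref{prop_soul} that $S$ sits at the end of a chain
\[
X = X_0 \supseteq X_1 \supseteq \cdots \supseteq X_N = S, \qquad N \le \dim X,
\]
where for each $k$ either $X_{k+1}=X_k$ (this occurs only at the last step, when $X_k$ is compact without boundary) or $X_{k+1}=C_a$ is the maximal superlevel set furnished by Lemma~\ref{lem_soulSuperlevelSets} for a concave function $f_k\colon X_k\to\IR$: the infimum $b$ of Busemann functions (Lemma~\ref{concaveBusemann}) when $X_k$ is noncompact, and $\dist_{\partial X_k}$ (Proposition~\ref{prop_distConcave}~(i)) when $X_k$ is compact with nonempty boundary. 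By transitivity of total convexity each $X_k$ is a compact convex subset of $X$; hence it carries the restriction of $d$ and is, by Lemma~\ref{lem_factsAS}~(iii), itself a finite-dimensional Alexandrov space of nonnegative curvature, so that the gradient-flow calculus for semiconcave functions on Alexandrov spaces \cite{PP,PetrGradFlow} (see also \cite[ch.~16]{AKP_Found}) applies to each $f_k$.

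For the single step, write $f=f_k$, $Y=X_k$, $Y'=X_{k+1}$ and assume $Y'\ne Y$. Since $f$ is concave and $Y$ is compact, the gradient flow $\Phi^\tau\colon Y\to Y$, $\tau\ge0$, of $f$ is defined for all $\tau$, depends continuously on $(\tau,p)$, and $\tau\mapsto d(\Phi^\tau p,\Phi^\tau q)$ is nonincreasing; in particular each $\Phi^\tau$ is non-expanding. Moreover $f$ is nondecreasing along gradient curves, and the critical set of $f$ — which for a concave function on a compact space is exactly its maximum set $Y'=f^{-1}(\max f)$ — is fixed pointwise by every $\Phi^\tau$. Hence $f(\Phi^\tau p)\nearrow\max f$, every $\omega$-limit point of an orbit is critical and therefore lies in $Y'$, and (this is the delicate point) $\Phi^\tau p$ converges as $\tau\to\infty$ to a point $\Phi^\infty p\in Y'$, continuously in $p$. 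Reparametrising the time parameter by a homeomorphism $[0,1)\to[0,\infty)$ and declaring the value at $1$ to be $\Phi^\infty$ yields a continuous $F_k\colon Y\times[0,1]\to Y$ with $F_k(\cdot,0)=\id$, $F_k(Y\times\{1\})\subseteq Y'$ and $F_k(\cdot,t)|_{Y'}=\id$ for every $t\in[0,1]$, for which $t\mapsto d(F_k(p,t),F_k(q,t))$ is still nonincreasing. For a trivial step $Y'=Y$ take $F_k$ constant in $t$.

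It remains to concatenate. Partition $[0,1]$ into $I_k=[k/N,(k+1)/N]$ and, for $t\in I_k$, set $F(p,t)=F_k\bigl(q_k(p),Nt-k\bigr)$, where $q_k(p):=F_{k-1}(\cdots F_0(p,1)\cdots,1)\in X_k$, so that the flows push $X$ into $X_1$, then $X_1$ into $X_2$, and so on down to $X_N=S$. At each junction $t=k/N$ the two formulas agree (the right-hand one is $F_k(\cdot,0)=\id$ applied to the left-hand endpoint, which already lies in $X_k$), so $F$ is continuous on $X\times[0,1]$; furthermore $F(\cdot,0)=\id$, $F(X\times\{1\})\subseteq S$, and $F(\cdot,t)$ fixes $S$ for every $t$ because each $F_k$ fixes $X_{k+1}\supseteq S$ — that is, $S$ is a strong deformation retract of $X$. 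Finally, since every $X_k$ carries the restriction of $d$ and $F(\cdot,k/N)$ equals $F_{k-1}(\cdot,1)$ applied to $F(\cdot,(k-1)/N)$, the monotonicity in $t$ of $d(F_k(p,t),F_k(q,t))$ for each $k$ telescopes through the junctions to
\[
d\bigl(F(p,t_2),F(q,t_2)\bigr)\le d\bigl(F(p,t_1),F(q,t_1)\bigr)\qquad(0\le t_1\le t_2\le1),
\]
which is the asserted non-expansion.

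The main obstacle is the single step: the assertions that the gradient flow of the merely concave $f_k$ on the compact space $X_k$ exists for all time, converges as $\tau\to\infty$ to a point of the maximum set $C_a$, and does so continuously in the initial point, so that the reparametrised map is jointly continuous up to $t=1$. This is precisely what one imports from the gradient-flow theory of semiconcave functions \cite{PP,PetrGradFlow,AKP_Found}, and it is the place where Perelman's original argument \cite[6.3]{P2}, formulated without that machinery, carries the real weight.
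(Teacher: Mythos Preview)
Your approach is essentially the same as the paper's: build the deformation step-by-step from the gradient flow of the concave function $f_k$ appearing in the soul construction, reparametrise the time interval $[0,\infty)$ onto $[0,1)$ (the paper uses $\tan(t\pi/2)$), extend by the limit map $\Phi^\infty$ at $t=1$, and concatenate. The paper packages the ``delicate point'' you flag --- long-time existence, $1$-Lipschitz property, and convergence of $\Phi^\tau$ to a $1$-Lipschitz limit $\Phi^\infty$ with image in $C_a$ --- into a single citation to Lytchak's result \cite[1.8]{L_OpenMap}, whereas you appeal more generally to the gradient-flow literature.

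One small slip worth fixing: you assert that ``each $X_k$ is a compact convex subset of $X$'', but $X_0=X$ itself need not be compact. Your subsequent argument for convergence (``since $f$ is concave and $Y$ is compact~\ldots'') therefore does not apply verbatim to the first step. This is easily repaired: in the noncompact step $f$ is the infimum of Busemann functions, whose superlevel sets $C_t$ are compact (Lemma~\ref{lem_soulSuperlevelSets}), and since $f$ is nondecreasing along gradient curves each orbit is trapped in a compact superlevel set, so the $\omega$-limit argument still goes through. With this correction your proof is complete and matches the paper's.
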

\begin{proof}
	We show that in each step of the soul construction such a map exists. By concatenation one obtains the deformation onto the soul. Let $ X $ be noncompact (resp.\ compact with non\-empty boundary). Let $ f $ denote the Busemann function (resp.\ boundary distance function) from the proof of Lemma~\ref{lem_soulSuperlevelSets} which assumes its maximum on a set $ C_a $. By \cite[1.8]{L_OpenMap}, the gradient flow $ \Phi^t:X\to X $ associated to the concave function $ f $ is $ 1 $-Lipschitz and converges to a $ 1 $-Lipschitz function $ \Phi^\infty(p)=\lim_{t\to \infty}\Phi^t(p) $ with image in $ C_a $. This allows us to define the continuous map
	\begin{align*}
		F:X\times [0,1]\to X,\quad (p,t)\mapsto \begin{cases}
			\Phi^{\tan(t\pi/2)}(p)&\quad\text{ if } t\in [0,1),\\
			\Phi^\infty(p)&\quad \text{ if } t=1,\\
		\end{cases}
	\end{align*}
	which satisfies the desired properties of a deformation. Furthermore, it is non-expanding by the relation $ \Phi^{t_2}=\Phi^{t_2-t_1}\circ \Phi^{t_1} $ for $ 0\leq t_t\leq t_2 <1 $ and the flow being $ 1 $-Lipschitz.
\end{proof}
\begin{rem}
	In \cite[6.3]{P2} another flow is constructed (not using the modern notions of gradient curves and flows), namely a flow $ \Psi^t $ that satisfies $ f(\Psi^t(x))=f(x)+t $ (if $ f(x)+t<a $) and consists of curves $ \beta $ with $ \beta^+(t) = \nabla_{\beta(t)}f/|\nabla_{\beta(t)}f|^2 $ in contrast to the usual gradient curves $ \alpha $ with $ \alpha^+(t) = \nabla_{\alpha(t)}f $ that make up the gradient flow $ \Phi^t $. A modified retraction $ \tilde{F} $ using $ \Psi^t $ instead of $ \Phi^t $ is defined and yields the superlevel sets $ C_t = \tilde{F}(X\times\{t\}) $ from Lemma~\ref{lem_soulSuperlevelSets}.\com{How does the distance estimate $ d(\alpha_1(t),\alpha_2(t))\leq e^{\lambda t}d(\alpha_1(0),\alpha_2(0)) $ translate to the parametrizations $ \beta $? }
\end{rem}
A natural question about the soul is that of uniqueness and was studied in \cite{SConv} for Riemannian manifolds. In a compact space, the soul construction yields unique Cheeger-Gromoll souls. But in noncompact spaces, the obtained Cheeger-Gromoll soul depends on the point chosen to start the construction and may yield a different soul for each starting point. An application of \v{S}arafutdinov's retraction yields the following.
\begin{prop}\label{prop_soulsIsometric}
	In a finite-dimensional Alexandrov space of nonnegative curvature any two Cheeger-Gromoll souls are isometric.
\end{prop}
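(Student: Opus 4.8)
The plan is to show that any two Cheeger-Gromoll souls $S_1, S_2$ of a finite-dimensional Alexandrov space $X$ of nonnegative curvature are related by the \v{S}arafutdinov retraction in a distance-preserving way. First I would reduce to the situation where both souls are obtained as limits of gradient flows, using Proposition~\ref{prop_sarafutdoinovsRetraction}: for each soul $S_i$ there is a non-expanding strong deformation retraction $F_i: X\times[0,1]\to X$ with $F_i(X,1)=S_i$ and $F_i(s,t)=s$ for all $s\in S_i$. Write $r_i:=F_i(\cdot,1):X\to S_i$ for the associated non-expanding retraction, so that $r_i|_{S_i}=\id_{S_i}$.

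The key step is to examine the compositions $g:=r_2|_{S_1}:S_1\to S_2$ and $h:=r_1|_{S_2}:S_2\to S_1$. Both are non-expanding since each $r_i$ is. Consider $h\circ g:S_1\to S_1$: it is a non-expanding self-map of the compact space $S_1$. The crucial observation is that $h\circ g$ must be surjective. One way to see this: if $h\circ g$ were not surjective, then iterating it would strictly shrink the image (using compactness and that a non-expanding non-surjective self-map of a compact metric space cannot have image of the same ``size''), but one can control this via the fact that $S_1$ carries a nonzero top-dimensional homology class (Lemma~\ref{lem_homology}, as $S_1$ is compact with empty boundary) which a non-expanding, hence degree-respecting-up-to-sign map of a non-surjective kind would have to kill --- more elementarily, a non-expanding map of a compact metric space onto a proper subset, composed with itself indefinitely, converges to a map onto a set of lower dimension, contradicting that $h\circ g$ fixes the top homology of $S_1$. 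Granting surjectivity of $h\circ g$, a standard fact about non-expanding surjections of compact metric spaces (they are isometries --- this follows because a non-expanding surjective self-map of a compact metric space preserves the diameter of every pair and, by an $\varepsilon$-net/iteration argument, must be distance-preserving) shows $h\circ g$ is an isometry of $S_1$. Symmetrically $g\circ h$ is an isometry of $S_2$. Since $h\circ g$ and $g\circ h$ are bijective isometries, $g$ and $h$ are mutually inverse bijections, and being non-expanding with non-expanding inverses they are isometries. Hence $S_1$ and $S_2$ are isometric.

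The main obstacle I anticipate is justifying the surjectivity of $h\circ g$ rigorously in the metric (non-smooth) setting: in the Riemannian case one can invoke that the retraction restricts to a diffeomorphism or use a volume/degree argument, but here I would instead argue purely metrically, using that a non-expanding self-map $\phi$ of a compact metric space $Y$ with $\phi(Y)\subsetneq Y$ satisfies $\bigcap_{n} \phi^n(Y)=:Y_\infty$ with $Y_\infty\subsetneq Y$ and $\phi|_{Y_\infty}$ a surjective isometry onto $Y_\infty$; then one must rule out $Y_\infty\subsetneq S_1$. This exclusion is where the topology of $S_1$ enters: $Y_\infty$ is a compact subset onto which $S_1$ non-expandingly (hence contractibly, via the deformation structure) retracts, but a compact manifold-like Alexandrov space without boundary cannot retract onto a proper closed subset without lowering top homology --- here Lemma~\ref{lem_homology} together with the fact that $r_i$ extends to the deformation $F_i$ (so the inclusion $Y_\infty\hookrightarrow S_1$ composed with the retraction is homotopic to $\id_{S_1}$, forcing $H_n(Y_\infty;\IZ/2\IZ)\to H_n(S_1;\IZ/2\IZ)$ to be onto, impossible if $\dim Y_\infty<n$ and impossible by invariance of domain if $Y_\infty$ is a proper subset of the same dimension) delivers the contradiction. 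Once this topological input is in place, the remaining steps are the routine metric-space facts about non-expanding surjections of compacta being isometries.
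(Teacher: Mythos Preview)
Your overall architecture matches the paper's proof exactly: take the two \v{S}arafutdinov retractions $r_i=F_i(\cdot,1)$, form $g=r_2|_{S_1}$ and $h=r_1|_{S_2}$, argue that $h\circ g:S_1\to S_1$ is a surjective $1$-Lipschitz self-map of a compact space and hence an isometry (Lemma~\ref{lem_NonExpandingOnCompactIsIsometry}), and deduce that $g$ itself is an isometry onto $S_2$.

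The gap is in your surjectivity argument. The claims that a non-expanding map is ``degree-respecting-up-to-sign'' or that iterating a non-surjective $1$-Lipschitz self-map yields a limit set $Y_\infty$ of strictly lower dimension are both unjustified: a $1$-Lipschitz map need not have nonzero degree, and $Y_\infty$ can perfectly well have the same Hausdorff dimension as $S_1$. Your final paragraph gestures at the correct mechanism (``$r_i$ extends to the deformation $F_i$''), but the detour through $Y_\infty$ obscures it. The paper's argument is much more direct: the map
\[
h_t:S_1\to S_1,\qquad p\mapsto F_1\bigl(F_2(p,t),1\bigr)
\]
is a homotopy with $h_0=\id_{S_1}$ (because $F_1$ fixes $S_1$ pointwise) and $h_1=h\circ g$. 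So $h\circ g$ is homotopic to the identity, and Lemma~\ref{lem_homotpicToIdentitySurjective} (which is where Lemma~\ref{lem_homology} actually enters) gives surjectivity immediately. No iteration, no $Y_\infty$.

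One smaller point: from $h\circ g$ and $g\circ h$ being isometries you cannot conclude that $g$ and $h$ are \emph{mutually inverse}; rotations of a circle already give counterexamples. What you can conclude (and what the paper does) is that $g$ is distance-preserving, since if $d(g(p),g(q))<d(p,q)$ for some $p,q$ then the non-expanding $h$ would force $d(h(g(p)),h(g(q)))<d(p,q)$, contradicting that $h\circ g$ is an isometry. Surjectivity of $g$ then follows separately from injectivity of $h$ and surjectivity of $h\circ g$.
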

The proof of \cite{SConv} carries over to Alexandrov spaces in a straightforward way. It requires the following two lemmata of which only the latter, originally formulated for compact manifolds, needs some further arguments to generalize to Alexandrov spaces.
\begin{lem}[\text{\cite[1.6.15]{BBI}}]\label{lem_NonExpandingOnCompactIsIsometry}
	Let $ X $ be a compact metric space and $ f:X\to X $ be a surjective function. If $ f $ is $ 1 $-Lipschitz, then $ f $ is an isometry.
	%, that is, it satisfies for all points $ x,y\in X $: $ d(f(x),f(y))\leq d(x,y) $
\end{lem}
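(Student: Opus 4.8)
The plan is to show that $f$ is distance-preserving; since a distance-preserving self-map is injective and $f$ is assumed surjective, this immediately makes $f$ a bijective isometry. The whole difficulty is therefore concentrated in producing, for the given $1$-Lipschitz surjection $f$, arbitrarily large iterates $f^{m}$ that are uniformly close to $\id_X$. Once such iterates are available, the standard monotonicity of $n\mapsto d(f^n(p),f^n(q))$ (which is nonincreasing because $f$ is $1$-Lipschitz) forces $d(p,q)\le d(f(p),f(q))$ for all $p,q$, and hence equality.

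First I would invoke the Arzel\`a--Ascoli theorem: the family $\{f^n : n\in\IN\}$ is equicontinuous (each iterate is $1$-Lipschitz) and $X$ is compact, so it is relatively compact in $C(X,X)$ with the uniform metric. Pick a subsequence $f^{n_i}$ converging uniformly to some $1$-Lipschitz map $g:X\to X$, and, passing to a further subsequence, arrange that the gaps $m_i:=n_{i+1}-n_i$ tend to infinity. For fixed $x\in X$, write $f^{n_{i+1}}(x)=f^{m_i}(f^{n_i}(x))$. From $d(f^{n_{i+1}}(x),f^{n_i}(x))\to d(g(x),g(x))=0$ together with $d\big(f^{m_i}(g(x)),f^{m_i}(f^{n_i}(x))\big)\le d(g(x),f^{n_i}(x))\to 0$ one obtains $f^{m_i}(g(x))\to g(x)$; hence $f^{m_i}\to\id$ pointwise on the subset $g(X)$, in fact uniformly on $g(X)$ by equicontinuity.

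Next I would upgrade this from $g(X)$ to all of $X$ using surjectivity. Each $f^n$ is onto (being a composition of surjections), and a uniform limit of surjective maps between compact metric spaces is surjective: given $y\in X$, choose $x_i$ with $f^{n_i}(x_i)=y$, extract a convergent subsequence $x_i\to x$, and estimate $d(g(x),y)\le d(g(x),g(x_i))+\lVert g-f^{n_i}\rVert_\infty\to 0$, so $g(x)=y$. Therefore $g(X)=X$ and $f^{m_i}\to\id_X$ uniformly. Finally, for any $p,q\in X$ and any $m_i\ge 1$ the monotonicity of iterated distances gives $d(f^{m_i}(p),f^{m_i}(q))\le d(f(p),f(q))$; letting $i\to\infty$ yields $d(p,q)\le d(f(p),f(q))\le d(p,q)$, so $f$ preserves distances, which completes the argument.

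I expect the main obstacle to be precisely the passage from ``$f^{m_i}$ converges to the identity on the image of $g$'' to ``on the whole space'': this is where surjectivity is indispensable (without it the statement fails, e.g.\ $x\mapsto x/2$ on $[0,1]$), and it rests on the small but essential observation that a uniform limit of surjections between compact spaces is still surjective. Everything else --- Arzel\`a--Ascoli, the telescoping identity $f^{n_{i+1}}=f^{m_i}\circ f^{n_i}$, and the monotonicity of $d(f^n(p),f^n(q))$ --- is routine.
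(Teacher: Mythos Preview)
Your argument is correct. The paper itself does not supply a proof of this lemma; it is stated with a reference to \cite[1.6.15]{BBI} and used as a black box, so there is no in-paper proof to compare against.

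A few remarks on your write-up. The uniformity in Step~3 is not actually needed: once you know in Step~4 that $g(X)=X$, pointwise convergence $f^{m_i}(y)\to y$ for every $y\in X$ is enough to pass to the limit in $d(f^{m_i}(p),f^{m_i}(q))\le d(f(p),f(q))$. Your surjectivity-of-the-limit argument is the genuinely important step and is handled cleanly.

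For comparison, the source \cite{BBI} gets there more quickly by reducing to the companion statement (their 1.6.14) that a \emph{nonexpanding} map's dual, a distance\-/nondecreasing self-map of a compact space, is automatically a surjective isometry: choose any right inverse $g$ of the surjection $f$; then $d(g(x),g(y))\ge d(f(g(x)),f(g(y)))=d(x,y)$, so $g$ is an isometry onto $X$, hence bijective, and $f=g^{-1}$ is an isometry. That route avoids Arzel\`a--Ascoli entirely. Your iterated-map approach is longer but more self-contained and makes the role of compactness (via sequential compactness and equicontinuity) very transparent.
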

\begin{lem}\label{lem_homotpicToIdentitySurjective}
	Let $ X $ be a compact $ n $-dimensional Alexandrov space of curvature bounded below without boundary. If $ f:X\to X $ is homotopic to the identity map, then $ f $ is surjective.
\end{lem}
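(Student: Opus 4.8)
The plan is to argue by contradiction using top-dimensional singular homology with $\IZ/2\IZ$ coefficients, where Lemma~\ref{lem_homology} supplies exactly the detection statement needed: for an $n$-dimensional Alexandrov space of curvature bounded below, $H_n(U;\IZ/2\IZ)$ is nonzero precisely when $U=X$ and $X$ is compact with empty boundary. The overall shape is the classical fact that a self-map of a closed ``manifold-like'' space which is homotopic to the identity cannot miss a point, transported to the Alexandrov setting via Lemma~\ref{lem_homology}.

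First I would record that $X$ is connected: a finite-dimensional Alexandrov space is locally compact, so by Lemma~\ref{lem_factsAS}~\ref{lem_hopfRinow} it is geodesic, hence path-connected. Since $X$ is compact with empty boundary, Lemma~\ref{lem_homology} applied to $U=X$ gives $H_n(X;\IZ/2\IZ)\neq\{0\}$. Because $f$ is homotopic to $\id_X$, homotopy invariance of singular homology yields $f_*=(\id_X)_*=\id$ on $H_n(X;\IZ/2\IZ)$; in particular $f_*$ is not the zero map.

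Then I would suppose, for contradiction, that $f$ is not surjective and fix $q\in X\setminus f(X)$, so that $f$ factors as $X\xrightarrow{g}X\setminus\{q\}\xrightarrow{\iota}X$, where $g$ is $f$ with restricted codomain and $\iota$ is the inclusion; hence $f_*=\iota_*\circ g_*$ factors through $H_n(X\setminus\{q\};\IZ/2\IZ)$. The set $X\setminus\{q\}$ is open in $X$, and since $X$ is geodesic every metric ball is path-connected (it is covered by geodesics joining its centre to its points), so $X$ and its open subset $X\setminus\{q\}$ are locally path-connected. Therefore the connected components $\{U_\alpha\}$ of $X\setminus\{q\}$ are open, $X\setminus\{q\}=\bigsqcup_\alpha U_\alpha$ topologically, and so $H_n(X\setminus\{q\};\IZ/2\IZ)\cong\bigoplus_\alpha H_n(U_\alpha;\IZ/2\IZ)$. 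Each $U_\alpha$ is a nonempty, open, connected subset of $X$ with $U_\alpha\neq X$ (since $q\notin U_\alpha$), so Lemma~\ref{lem_homology} forces $H_n(U_\alpha;\IZ/2\IZ)=\{0\}$. Thus $H_n(X\setminus\{q\};\IZ/2\IZ)=\{0\}$ and $f_*=\iota_*\circ g_*=0$, contradicting $f_*=\id\neq0$. This shows $f$ is surjective.

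I expect the argument to be essentially this short, with all the substantial work delegated to Lemma~\ref{lem_homology}; there is no real obstacle beyond bookkeeping. The two points needing a line of care are the additivity of singular homology over the open connected components of $X\setminus\{q\}$ (where local path-connectedness via geodesics is used) and the degenerate low dimensions, both handled uniformly by the above: for $n=0$ the space $X$ is a single point and surjectivity is automatic, and for $n=1$ we have $X\cong\IS^1$ by Lemma~\ref{lem_factsAS}~\ref{lem_OneDimAS}, so $X\setminus\{q\}$ is an interval and $H_1(X\setminus\{q\};\IZ/2\IZ)=\{0\}$.
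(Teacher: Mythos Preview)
Your proof is correct and follows essentially the same strategy as the paper: factor $f$ through $X\setminus\{q\}$ and use Lemma~\ref{lem_homology} to show that $H_n(X\setminus\{q\};\IZ/2\IZ)=\{0\}$, contradicting $f_*=\id$ on the nontrivial group $H_n(X;\IZ/2\IZ)$. The one difference is that the paper devotes a paragraph to proving that $X\setminus\{x_0\}$ is connected (via non-branching of geodesics and the one-dimensional classification), whereas you bypass this by decomposing $X\setminus\{q\}$ into its open connected components and applying Lemma~\ref{lem_homology} componentwise; your handling is slightly more economical and avoids that case analysis.
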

\begin{proof}
	If $ n=0 $, then $ X $ is a single point and the statement is trivially true. Assume $ n\geq 1 $.
	The proof of \cite{SConv} for manifolds only relies on the two facts about the singular homology groups: $ H_n(X)\neq \{0\} $ and $ H_n(X\setminus\{x_0\})=\{0\} $ for all $ x_0\in X $ (and some fixed choice of coefficients). Note that $ X\setminus \{x_0\} $ is not compact since there are geodesics in $ X $ issuing in $ x_0 $. Hence, if $ X\setminus\{x_0\} $ is connected, we obtain the desired homology groups from Lemma~\ref{lem_homology}. Now, if $ f $ is not surjective, it can be written as a composition 
	%\[ X\overset{\tilde{f}}{\to} X\setminus\{x_0\} \overset{i}{\to} X \]
	\[ X \to X\setminus\{x_0\} \to X \]
	for some $ x_0\in X\setminus f(X) $. The induced homomorphisms 
	%\[H_n(X) \overset{\tilde{f}_*}{\to} H_n(X\setminus \{x_0\}) \overset{i_*}{\to} H_n(X) \]
	\[H_n(X) \to H_n(X\setminus \{x_0\}) \to H_n(X) \]
	then yield with $ H_n(X\setminus \{x_0\})=\{0\} $ that the induced homomorphism $ f_* $ by $ f $ is zero. But since the identity $ \id $ on $ X $ induces the identity isomorphism on $ H_n(X) $, the assumption $ H_n(X)\neq \{0\} $ yields $ \id_*\neq f_* $, a contradiction to the homotopy invariance of homology (cf.\ \cite{Hatcher}).
	
	It remains to show that $ X\setminus\{x_0\} $ is connected. Suppose it were not, then all points of a path-component are joined to some fixed point in a distinct path-component by a geodesic in $ X $ which passes through $ x_0 $. Since geodesics do not branch (Lemma~\ref{lem_factsAS}~\ref{lem_NonBranchingGeod}), this implies that each path-component united with $ x_0 $ is the image of a geodesic (or ray) emanating in $ x_0 $. Since $ X\setminus\{x_0\} $ is open in $ X $ and hence $ n $-dimensional (\cite[10.6.1]{BBI}), there exists an open neighborhood homeomorphic to some open subset $ \IR^n $. Therefore, $ n=1 $. But in this case, by Lemma~\ref{lem_factsAS}~\ref{lem_OneDimAS} and $ X $ being compact without boundary, $ X $ must be homeomorphic to the circle $ \IS^1 $ and removing a point still yields a connected subset. A contradiction.\com{
		The main argument could be carried out with cohomology where we directly use $ \bar{H}^n(X;\IZ/2\IZ)=\IZ/2\IZ $ (\cite[Lemma~1]{GP_RadSphere}) and that for compact Hausdorff spaces, by cite{Spanier}, its singular cohomology groups coincide with the Alexander-Spanier cohomology groups (Reference not found in cite{Spanier} but is stated in cite{Mitsu}). Maybe some standard results about cohomology yield $ H^n(X;\IZ/2\IZ)\neq \{0\} $?
	}
\end{proof}
The remaining proof works exactly as given in \cite{SConv} and is illustrated here for the convenience of the reader.
\begin{proof}[Proof of Proposition~\ref{prop_soulsIsometric}]
	Let $ S_1,S_2\subseteq X $ denote two Cheeger-Gromoll souls of a finite-dimensional Alexandrov space $ X $ of nonnegative curvature. Let $ F_1,F_2:X\times[0,1]\to X $ be the deformations obtained from Proposition~\ref{prop_sarafutdoinovsRetraction} corresponding to the soul constructions of $ S_1 $ and $ S_2 $, respectively. For $ t\in[0,1] $, we define the map
	\begin{equation*}
		h_t:S_1\to S_1,\quad p\mapsto F_1(F_2(p,t),1).
	\end{equation*}
	Then $ h_0 $ is the identity on $ S_1 $ and the maps $ (h_t)_{t\in[0,1]} $ define a homotopy to the map $ h_1 $ which is $ 1 $-Lipschitz by the non-expanding properties of $ F_1 $ and $ F_2 $ (see Figure~\ref{fig_cylSoulsIsometric} for illustration). Applying the two previous Lemmata~\ref{lem_NonExpandingOnCompactIsIsometry} and \ref{lem_homotpicToIdentitySurjective} yields that $ h_1 $ is a surjective isometry on $ S_1 $. Clearly, the maps
	\begin{align*}
		f_1:S_2\to S_1&,\quad p\mapsto F_1(p,1),\\
		f_2:S_1\to S_2&,\quad p\mapsto F_2(p,1),
	\end{align*} 
	are non-expanding. Suppose there exists points $ p,q\in S_1 $ that satisfy \linebreak $ d(f_2(p),f_2(q)) < d(p,q) $. This would imply 
	$$ d(f_1(f_2(p)),f_1(f_2(q))) \leq d(f_2(p),f_2(q)) < d(p,q), $$ which contradicts $ h_1=f_1\circ f_2 $ being an isometry. Therefore, $ f_2 $ is an isometry. Analogously, we obtain that $ f_1 $ is an isometry. 
	\begin{figure}[h]
		\centering
		\includegraphics[width=0.8\linewidth]{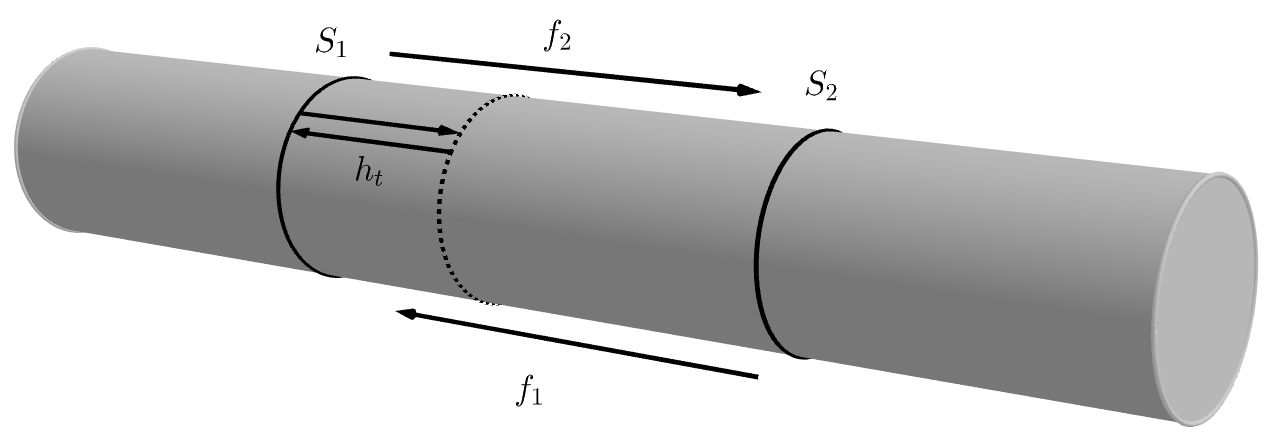}
		\caption{Two souls on a cylinder with the isometries $ f_1 $, $ f_2 $ and the homotopy $ (t,x)\mapsto h_t(x) $}\label{fig_cylSoulsIsometric}
	\end{figure}
	It remains to show that $ f_2 $ is surjective. Suppose there exists a point $ p\in S_2\setminus f_2(S_1) $. Since $ f_1 $ is injective, it would follow that $ f_1(p) $ is not in the image of $ h_1=f_1\circ f_2 $ which is a surjective map, a contradiction. Hence, $ f_1:S_1\to S_2 $ defines a surjective isometry and the souls are isometric.
\end{proof}
Note that souls that are not necessarily Cheeger-Gromoll souls can be much more abundant. For instance, in a compact convex subset of $ \IR^2 $ the Cheeger-Gromoll soul is a point and uniquely defined, but any other point is a soul, too\com{And only the points: If not a point, the soul is $ 1 $-dim and hence $ \IS^1 $ which is not possible in the plane}. Uniqueness up to isometry (or even up to homeomorphism) for these souls seems to be a more difficult question already in Riemannian manifolds (see \cite[2.1]{CG}).
%%%%%%%%%%%%%%%%%%%%%%%%%%%%%%%%%%%%%%%%%%%%%%%%%%%%%%%%%%%%%%%%%%%
\section{The Injectivity Radius in Length Spaces}\label{sec_Inj}
\com{Definitions of $ \inj $:
		\begin{itemize}
			\item cite[Section 5]{AB\_GaussEqInjRadSubspaces}: For complete and locally compact length spaces (hence, including finite-dimensional Alexandrov spaces) their definition agrees with our def of $ \inj $. For general length spaces they present a definition out of a preprint of Lytchak: \\
			``[Let $ \inj(p) $ be] the supremum of radii $ \rho $ with the property that for any $ \eps>0 $ there exists $ \delta>0 $ such that any two Lipschitz curves from $ p $ to any $ q \in B_\rho(p) $, whose lengths are within $ \delta $ of $ d(p,q) $, are $ \eps $-close to each other.'' In this unpublished preprint, Lytchak treats the generalization of the Klingenberg Lemma~to non-proper spaces which are uniformly locally CAT($ \kappa $) in a sense.
			\item In cite{ST} they define a cut point to be the following: \\
			``Let $ \Gamma(X) $ for $ x\in X \setminus K $ be the set of all geodesics from $ x $ to $ K $. A point $ x\in X $ is by definition a cut point to $ K $ iff a geodesic in $ \Gamma(X) $ is not properly contained in any geodesic to $ K $.''
			By that they include points such as the cone tip in the cut locus since each geodesic ends there.
			Their findings are for AS without boundary and of dimension $ 2 $ only.
			\item In cite{ZamCutLoci}, defined for complete length spaces, the cut locus is our $ \MinCut $ and the multijoined locus is defined to be our $ \Cut $. Their ambiguous locus makes no sense in our setting where we are interested in the cut locus of a single point (namely, the distance to it). Curiously, they seem not to make a distinction for locally compact spaces. No findings about the injectivity radius here.
		\item cite{Zam} treats convex surfaces in $ \IR^3 $. Their conjugate points have a weird definition cite[2]{Zam}. They should never appear? Their $ E_x  $ is our $ \MinCut $ and $ C_x $ is our $ \Cut $. They assume or prove (?) that $ \Cut\subseteq\MinCut $. This is probably true just like our Lemma~4.5 (original thesis) since they have no boundary and a lower curvature bound. They say that $ \MinCut\setminus \Cut $ consists only of conjugate points which also makes sense in our definitions.
		\end{itemize}}
The usual definition of the injectivity radius of a point $ p $ in a Riemannian Manifold is the supremum of all positive radii such that the exponential map restricted to a ball of that radius is a diffeomorphism onto its image (see \cite{Lee}).
\subsection{Two Injectivity Radii}
Various generalizations of the Riemannian injectivity radius to length spaces can be found in the literature (among others \cite{Bow,Zam,Pl02,AB_GaussEqInjRadSubspaces}). Following \cite{ShSo},
%For further discussion see \cite{ShSo,Pl92,Zam,MP,OS}.
we introduce two different common notions of an injectivity radius. The Riemannian injectivity radius of a point $ p $ is equal to the distance to the cut locus of $ p $, the set of points after which geodesics issuing in $ p $ stop being minimizing (see \cite[10.36]{Lee}). So, the following is a natural definition in length spaces that generalizes the Riemannian injectivity radius.
We define the distance of a point to an empty set and the infimum of an empty set to be (positive) infinity.
\begin{defn}
	Let $ X $ be a length space and $ p\in X $. We define the \textit{minimal cut locus}\index{Cut locus!minimal} $ \MinCut_X(p) $ to be the set of all points $ \gamma(t_0) $, $ t_0\in(0,1) $, that lie on a nontrivial local geodesic $ \gamma:[0,1]\to X $ such that the restriction $ \gamma|_{[0,t_0]} $ is a geodesic, but for all $ t\in(t_0,1] $ the restriction $ \gamma|_{[0,t]} $ is not. The \textit{minimal radius}\index{Minimal radius} $ \MinRad_X(p) $ of $ p $ is defined to be the distance $ d(p,\MinCut_X(p)) $ to the minimal cut locus. The \textit{minimal radius} $ \MinRad_X(A) $ of a subset $ A\subseteq X $ is the infimum $ \inf\limits_{p\in A}\MinRad_X(p) $.
\end{defn}
In Riemannian manifolds the name ``injectivity radius'' is justified by the fact that the exponential map is injective on balls of radius smaller than the injectivity radius, that is, local geodesics of length less than the injectivity radius are unique between their endpoints. In fact, this property \com{See \href{https://math.stackexchange.com/questions/3979936/two-definitions-of-the-injectivity-radius}{on MSE}}characterizes the injectivity radius: If the exponential map on a ball is injective, it is a diffeomorphism onto its image (see \cite[Theorem 2.1.12]{Kling}). Hence, the following also defines a generalization of the Riemannian injectivity radius.
\begin{defn}\label{def_inj}
	Let $ X $ be a length space and $ p\in X $. We define the \textit{cut locus}\index{Cut locus} $ \Cut_X(p) $ to be the set of all points in $ X $ which are joined to $ p $ by at least two distinct geodesics. We call those points \textit{cut points}\index{Cut point} of $ p $. The \textit{injectivity radius}\index{Injectivity radius} $ \inj_X(p) $ of $ p $ is defined to be the distance $ d(p,\Cut_X(p)) $ to the cut locus. The \textit{injectivity radius} $ \inj_X(A) $ of a subset $ A\subseteq X $ is the infimum $ \inf\limits_{p\in A}\inj_X(p) $.
\end{defn}
If the space considered is clear from context, we may omit its indication in the lower index.
\begin{rem}\label{rem_contGeodBelowInj}
	Note that the geodesics issuing in $ p $ of length less than $ \inj_X(p) $ are unique between their endpoints. In a proper metric space (e.g.\ a complete, locally compact length space) this implies that these geodesics vary continuously with their endpoints (with $ p $ fixed) \cite[ch.\ I, 3.12]{BH}.
\end{rem}
%The main difference of the two notions is that the minimal radius can only meaningful describe the behaviour of extendable local geodesics. But in length spaces local geodesics are not necessarily extendable. For instance, in a cone, geodesics through the cone tip have to start or terminate there. The same behaviour is exhibited by geodesics through the boundary of $ X $ which leads us to the following example where the minimal radius and the injectivity radius differ.
The two introduced notions can differ.
\begin{ex}
	Consider two geodesics on the sphere $ \IS^2 $ that join two antipodal points and have an angle $ <\pi $. Take the closed subset $ X $ bounded by those geodesics. This is an Alexandrov space of curvature $ 1 $ ($ \leq 1 $ and $ \geq 1 $). It satisfies $ \inj (X) = \pi $ and $ \MinRad(X) = \infty $ since all local geodesics are geodesic.
\end{ex}
Requiring extendability of geodesics and a lower curvature bound (which prohibits branching of geodesics) would give $ \inj\geq \MinRad $. But for now we can further investigate the two introduced notions without curvature bounds. Later we will focus on the setting of Alexandrov spaces of curvature bounded above. This will allow us to control conjugate points which we treat in the following section.
\subsection{Conjugate Points}
In Riemannian manifolds a central role when dealing with the injectivity radius is played by conjugate points which are defined via Jacobi Fields. An essential observation is that the exponential map fails to be a local diffeomorphism at a conjugate point and geodesics are not minimizing past conjugate points (\cite{Lee}). The latter is not necessarily the case in length spaces. Nonetheless, we will define a generalization of conjugate points and take advantage of their absence to be able to better describe the injectivity and minimal radius. 
%Note that in length spaces local geodesics can be geodesic past conjugate points. In Example~\ref{ex_orangeSliceExtended} a pole is conjugate to the antipodal pole but not part of its minimal cut locus. To introduce conjugate points, we define first the endpoint map as a generalization of the Riemannian exponential map.
\begin{defn}
	Let $ X $ be a metric space and $ p\in X $. We define $ \tilde{X}_p $ to be the set of all local geodesics $ c:[0,1]\to X $ with $ c(0)=p $ including the constant map $ 0_p:[0,1]\to X $ with value $ p $. We equip $ \tilde{X}_p $ with the metric
	\begin{equation*}
		\rho(c_1,c_2):=\sup_{t\in[0,1]} d(c_1(t),c_2(t)).
	\end{equation*}
	And we define the \textit{endpoint map}\index{Endpoint map}\index{Exponential map}
	\begin{equation*}
		\exp_p:\tilde{X}_p\to X,\ c\mapsto c(1). 
	\end{equation*}
\end{defn}
\begin{rem}
	The endpoint map is $ 1 $-Lipschitz, that is, $ d(\gamma(1),\sigma(1))\leq \rho(\gamma,\sigma) $ for all $ \gamma,\sigma\in \tilde{X}_p $. In particular, it is continuous.
\end{rem}
The space $ \tilde{X}_p $ serves as a generalization of the Riemannian tangent space (but usually differs from the tangent cone $ K(\Sigma_p) $ in spaces of curvature bounded below). In \cite{Ber} a Euclidean structure is introduced into this space to obtain charts via the endpoint map under assumption of bounded curvature. Without bounded curvature it may have much less structure.
\begin{defn}\label{def_conj}
	Let $ X $ be a metric space and $ \gamma:[0,1]\to X $ a local geodesic. Then $ \gamma(1) $ is said to be \textit{conjugate}\index{Conjugate point} to $ \gamma(0) $ along $ \gamma $, if there is no neighborhood $ U\subseteq \tilde{X}_p $ at $ \gamma $ such that $ \exp_p|_U $ is a homeomorphism onto its image. 
	We define the \textit{conjugate radius} $ \conj_X(p) $ of $ p $ to be the infimum over lengths of all local geodesics issuing in $ p $ that admit conjugate points. The conjugate radius $ \conj_X(A) $ of a subset $ A\subseteq X $ is the infimum $ \inf\limits_{p\in A}\conj_X(p) $.
	%We say that $ \gamma $ admits no conjugate points if, for all $ t\in(0,1] $, the point $ \gamma(t) $ is not conjugate to $ \gamma(0) $ along the linear reparametrisation $ \gamma_t\in\tilde{X}_p $ of $ \gamma|_{[0,t]} $.
\end{defn}
%If we have two sequences $ (\gamma_i,\sigma_i:[0,1]\to X)_{i\in \IN} $ such that $ \gamma_i $ and $ \sigma_i $ are distinct but have common endpoints for all $ i\in\IN $, then 
Note that $ p $ can be conjugate to itself along the constant geodesic\com{As really can happen in a space with dense points of positive infinite curvature}. In spaces of curvature bounded above this is prevented by Lemma~\ref{lem_factsAS}~\ref{lem_locMinimizing}.
This definition of conjugate points agrees with the definition in \cite{AB} but differs from the definition given in \cite{ShSo} of ``one-sided conjugate points'' (see Appendix~\ref{sec_AppKlingenberg}).
The following Lemmata illustrate a relation of conjugate points to the (minimal) cut locus.
\begin{lem}\label{lem_cutLocClosed}
	Let $ X $ be a complete, locally compact length space and $ p\in X $. Then all points in $ \overline{\Cut(p)}\setminus \Cut(p) $ are conjugate to $ p $.
\end{lem}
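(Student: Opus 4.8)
The plan is to take a point $q \in \overline{\Cut(p)} \setminus \Cut(p)$, produce two distinct geodesics from $p$ to $q$ using a limiting argument, and then show that the only way these two geodesics can fail to witness $q \in \Cut(p)$ is if they in fact coincide, in which case $q$ must be a conjugate point. First I would choose a sequence $q_n \to q$ with $q_n \in \Cut(p)$, so for each $n$ there are two distinct geodesics $\gamma_n, \sigma_n : [0,1] \to X$ from $p$ to $q_n$. Since $d(p,q_n) \to d(p,q)$, these geodesics are eventually contained in a bounded ball, so by Lemma~\ref{lem_factsAS}~\ref{lem_geodLimit} we may pass to a subsequence along which $\gamma_n \to \gamma$ and $\sigma_n \to \sigma$ uniformly, where $\gamma, \sigma$ are geodesics from $p$ to $q$ (they are nontrivial since $q \neq p$; note $p \notin \Cut(p)$ trivially and $q$ being in the closure of $\Cut(p)$ but $q \neq p$ can be arranged, or handled: if $q = p$ then $\overline{\Cut(p)}$ contains $p$ only in degenerate situations which one checks separately, e.g.\ the constant geodesic case). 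Since $q \notin \Cut(p)$, there cannot be two \emph{distinct} geodesics from $p$ to $q$, so we must have $\gamma = \sigma$.

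The crux is then to argue that $\gamma = \sigma$ together with $\gamma_n \neq \sigma_n$ for all $n$ forces $q = \gamma(1)$ to be conjugate to $p$ along $\gamma$. The idea is to argue by contradiction: suppose $q$ is \emph{not} conjugate to $p$ along $\gamma$. Then by Definition~\ref{def_conj} there is a neighborhood $U \subseteq \tilde X_p$ of $\gamma$ on which $\exp_p|_U$ is a homeomorphism onto its image. For $n$ large, both $\gamma_n$ and $\sigma_n$ lie in $U$ (since $\rho(\gamma_n, \gamma) \to 0$ and $\rho(\sigma_n, \gamma) = \rho(\sigma_n, \sigma) \to 0$), and both satisfy $\exp_p(\gamma_n) = q_n = \exp_p(\sigma_n)$. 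Injectivity of $\exp_p|_U$ then gives $\gamma_n = \sigma_n$, contradicting the choice of $\gamma_n \neq \sigma_n$. Hence $q$ is conjugate to $p$ along $\gamma$, as claimed.

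The main obstacle I anticipate is making the uniform-convergence step fully rigorous, specifically: (i) checking that $\gamma$ and $\sigma$ really are geodesics from $p$ to $q$ of the correct (equal) length — this follows because uniform limits of geodesics parametrized proportionally to arc length on a bounded ball in a proper space are again geodesics, with $L(\gamma) = \lim L(\gamma_n) = \lim d(p,q_n) = d(p,q)$, and Lemma~\ref{lem_factsAS}~\ref{lem_geodLimit} already packages exactly this; and (ii) the possibility that the limiting geodesics degenerate to the constant map when $q = p$. To handle (ii) cleanly I would note that if $q = p$ then $q \in \overline{\Cut(p)}$ forces $p$ to be conjugate to itself along the constant geodesic $0_p$ by the same argument: a sequence $q_n \to p$ in $\Cut(p)$ with geodesic pairs $\gamma_n \neq \sigma_n$ shrinking to $0_p$ in $\tilde X_p$ would, if $0_p$ were non-conjugate, lie in a neighborhood on which $\exp_p$ is injective, again forcing $\gamma_n = \sigma_n$. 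A secondary subtlety is ensuring $U$ can be taken small enough in the $\rho$-metric that eventually-close geodesics land inside it; this is immediate from the definition of a neighborhood in the metric space $(\tilde X_p, \rho)$ and the fact that $\rho(\gamma_n, \gamma), \rho(\sigma_n, \gamma) \to 0$.
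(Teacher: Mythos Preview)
Your proposal is correct and follows essentially the same approach as the paper: pick $q_n \in \Cut(p)$ converging to $q$, extract subsequential limits $\gamma,\sigma$ of the two geodesic families via Lemma~\ref{lem_factsAS}~\ref{lem_geodLimit}, and observe that if $\gamma=\sigma$ then any neighborhood of $\gamma$ in $\tilde X_p$ contains distinct geodesics $\gamma_n,\sigma_n$ with the same endpoint, so $\exp_p$ fails to be injective there and $q$ is conjugate. The paper's write-up is terser (it does not spell out the contradiction via injectivity of $\exp_p|_U$ or the $q=p$ case), but the argument is the same.
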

\begin{proof}
	Let $ q\in\overline{\Cut(p)} $ and $ (q_i)_{i\in\IN}\subseteq \Cut(p) $ be a sequence of cut points converging to $ q $. Then there exist two distinct geodesics $ \gamma_i $ and $ \sigma_i $ joining $ p $ to $ q_i $ for each $ i\in\IN $. 
	Note that these geodesics are contained in a bounded closed ball. Then Lemma~\ref{lem_factsAS}~\ref{lem_geodLimit}
	allows us to pass first to a convergent subsequence $ (\gamma_{i_k})_{k\in\IN} $ with limit $ \gamma $ and then to a convergent subsequence of $ (\sigma_{i_k})_{k\in\IN} $ with limit $ \sigma $. If $ \gamma=\sigma $, then $ q $ is a conjugate point along $ \gamma $, since in each neighborhood of $ \gamma $ we can find two distinct geodesics $ \gamma_i,\sigma_i $ with the same endpoint $ q_i $. If $ \gamma\neq\sigma $, then $ q $ is a cut point of $ p $.
\end{proof}
The following lemma was similarly formulated in \cite[7.15]{ShSo} and \cite[Theorem~I]{ZamCutLoci}.
\begin{lem}\label{lem_MinCutminCutConj}
	Let $ X $ be a complete, locally compact length space and $ p\in X $. Then all points in $ \MinCut(p)\setminus \Cut(p) $ are conjugate to $ p $.\com{This holds for $ \overline{\MinCut}\setminus \Cut $  too}
	%if there is a non-conjugate point in $ \overline{MinCut(p)} $ that realizes $ \MinRad(p) $\com{see \cite[7.16]{ShSo}}
\end{lem}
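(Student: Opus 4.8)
The plan is to take a point $q \in \MinCut(p) \setminus \Cut(p)$ and show it must be conjugate to $p$. By definition of $\MinCut(p)$, there is a nontrivial local geodesic $\gamma:[0,1] \to X$ with $\gamma(0) = p$ and some $t_0 \in (0,1)$ such that $q = \gamma(t_0)$, the restriction $\gamma|_{[0,t_0]}$ is a (minimizing) geodesic, but $\gamma|_{[0,t]}$ fails to be minimizing for every $t \in (t_0,1]$. I will argue by contradiction: suppose $q$ is \emph{not} conjugate to $p$ along $\gamma|_{[0,t_0]}$. Then, by Definition~\ref{def_conj}, there is a neighborhood $U \subseteq \tilde{X}_p$ of $\gamma|_{[0,t_0]}$ (reparametrized to $[0,1]$) on which the endpoint map $\exp_p$ is a homeomorphism onto its image, in particular an open map near $\gamma|_{[0,t_0]}$.

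First I would produce, for a sequence $t_k \searrow t_0$, geodesics witnessing that $\gamma|_{[0,t_k]}$ is not minimizing: for each $k$ there is a geodesic $\sigma_k$ from $p$ to $\gamma(t_k)$ with $L(\sigma_k) = d(p,\gamma(t_k)) < L(\gamma|_{[0,t_k]})$. Since all these curves lie in a fixed bounded closed ball, Lemma~\ref{lem_factsAS}~\ref{lem_geodLimit} lets me pass to a subsequence along which $\sigma_k$ converges uniformly to a geodesic $\sigma$ from $p$ to $q$, with $L(\sigma) = d(p,q) = L(\gamma|_{[0,t_0]})$ (using continuity of length for the converging minimizing geodesics, or simply that $d(p,\gamma(t_k)) \to d(p,q)$). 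Now there are two cases. If $\sigma \neq \gamma|_{[0,t_0]}$ as geodesics from $p$ to $q$, then $q \in \Cut(p)$, contradicting $q \notin \Cut(p)$. So $\sigma = \gamma|_{[0,t_0]}$, i.e.\ $\sigma_k \to \gamma|_{[0,t_0]}$ in $\tilde{X}_p$ (after the reparametrization to $[0,1]$).

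The key step is then to exploit the assumed local homeomorphism property of $\exp_p$ near $\gamma|_{[0,t_0]}$ to reach a contradiction with the fact that $\gamma$ extends past $q$ as a local geodesic that is no longer minimizing. Since $\sigma_k \to \gamma|_{[0,t_0]}$ in $U$ and $\exp_p(\sigma_k) = \gamma(t_k) \to q = \exp_p(\gamma|_{[0,t_0]})$, the inverse map (continuous on $\exp_p(U)$) forces $\sigma_k \to \gamma|_{[0,t_0]}$, which we already have — so the real leverage comes from comparing $\sigma_k$ with the \emph{restricted extensions} $\gamma|_{[0,t_k]}$: both have the same endpoint $\gamma(t_k)$, both lie near $\gamma|_{[0,t_0]}$ in $\tilde{X}_p$ for $k$ large (the first by construction, the second because $\gamma|_{[0,t_k]} \to \gamma|_{[0,t_0]}$ uniformly as $t_k \searrow t_0$), and $\gamma|_{[0,t_k]} \neq \sigma_k$ since they have different lengths. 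Two distinct points of $U$ with the same image under $\exp_p$ contradict injectivity of $\exp_p|_U$. Hence $q$ is conjugate to $p$ along $\gamma|_{[0,t_0]}$.

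The main obstacle I anticipate is the bookkeeping around reparametrizations and the precise sense in which $\gamma|_{[0,t_k]}$ and $\sigma_k$ are ``near'' $\gamma|_{[0,t_0]}$ in the sup-metric $\rho$ on $\tilde{X}_p$: one must reparametrize each $\gamma|_{[0,t_k]}$ and each $\sigma_k$ to the unit interval $[0,1]$, and check that after this rescaling the convergence $\rho(\cdot,\gamma|_{[0,t_0]}) \to 0$ still holds — for $\gamma|_{[0,t_k]}$ this is straightforward from uniform continuity of $\gamma$ and $t_k \to t_0$, and for $\sigma_k$ it follows from the uniform convergence obtained via Lemma~\ref{lem_factsAS}~\ref{lem_geodLimit} together with $L(\sigma_k) \to L(\gamma|_{[0,t_0]})$. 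A secondary subtlety is ensuring $\gamma|_{[0,t_k]}$ and $\sigma_k$ are genuinely distinct elements of $\tilde{X}_p$ and not merely reparametrizations of one another; this is guaranteed because their lengths differ (one is minimizing, the other strictly longer), so they cannot be related by an affine reparametrization.
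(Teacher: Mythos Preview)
Your proposal is correct and follows essentially the same approach as the paper: pick points $q_k=\gamma(t_k)$ past $q$, take minimizing geodesics $\sigma_k$ from $p$ to $q_k$, extract a limit $\sigma$, and split into the two cases $\sigma\neq\gamma|_{[0,t_0]}$ (giving $q\in\Cut(p)$) and $\sigma=\gamma|_{[0,t_0]}$ (giving conjugacy). The paper's proof is terser---it simply asserts ``otherwise, $q$ is conjugate to $p$ along $\gamma$'' without spelling out the injectivity argument you give in your final paragraph---but your contradiction framing and explicit comparison of $\sigma_k$ with $\gamma|_{[0,t_k]}$ is precisely the content behind that assertion, and your discussion of reparametrization bookkeeping fills in details the paper leaves implicit.
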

\begin{proof}
	Let $ q\in\MinCut(p) $, then there exists a local geodesic $ \gamma $ from $ p $ through $ q $ that is a geodesic between $ p $ and $ q $ but not past $ q $. Let $ (q_i)_{i\in\IN} $ be a sequence of points on $ \gamma $ past $ q $ with limit $ q $ and consider geodesics $ \sigma_i $ joining $ p $ to $ q_i $. 
	There exists a convergent subsequence with limit geodesic $ \sigma $ joining $ p $ to $ q $. If the geodesic $ \sigma $ is distinct from $ \gamma $, then $ q\in \Cut(p) $. Otherwise, $ q $ is conjugate to $ p $ along $ \gamma $.
\end{proof}
We conclude this section with a fundamental observation about the injectivity radius in the absence of conjugate points. In Riemannian manifolds the injectivity radius can be controlled by conjugate points and closed local geodesics \cite{Kling1959}. This was generalized to length spaces in varying formulations (e.g.\ in \cite[2.13]{Bow},\cite[8.1]{ShSo}). Note that there are problems with the proof of \cite[8.1]{ShSo} whose correction we address in Appendix~\ref{sec_AppKlingenberg}. We give a proof that makes direct use of the definition of conjugate points.
\begin{lem}[Klingenberg Lemma]\label{lem_Klingenberg}
	Let $ X $  be a complete, locally compact length space and $ p\in X $. If $ q\in \overline{\Cut(p)} $ is a point with $ \inj(p)=d(p,q)>0 $, then $ q $ is conjugate to $ p $ or it is the midpoint of some local geodesic from $ p $ to $ p $ which has length $ 2\inj(p) $.
\end{lem}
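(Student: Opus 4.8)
The plan is to argue by contradiction, assuming that $q \in \overline{\Cut(p)}$ satisfies $\inj(p) = d(p,q) > 0$, that $q$ is \emph{not} conjugate to $p$, and that $q$ is not the midpoint of a local geodesic from $p$ to $p$ of length $2\inj(p)$; then I would derive that $q \notin \overline{\Cut(p)}$. The first step is to dispose of the case $q \notin \Cut(p)$: by Lemma~\ref{lem_cutLocClosed}, every point of $\overline{\Cut(p)} \setminus \Cut(p)$ is conjugate to $p$, so the assumption that $q$ is not conjugate forces $q \in \Cut(p)$. Hence there are (at least) two distinct geodesics $\gamma, \sigma : [0,1] \to X$ from $p$ to $q$, both minimizing since $d(p,q) = \inj(p)$ is the distance to the cut locus. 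Rescale so that both have unit speed on $[0,L]$ with $L = d(p,q)$.

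The heart of the argument is the classical Klingenberg bifurcation dichotomy adapted to this metric setting. Because $q$ is not conjugate to $p$ along $\gamma$, there is a neighborhood $U_\gamma \subseteq \tilde X_p$ of $\gamma$ on which $\exp_p$ is a homeomorphism onto its image; similarly for $\sigma$. Now pick points $q_i \to q$, $q_i \neq q$, realizing $q \in \overline{\Cut(p)}$, so each $q_i$ is joined to $p$ by two distinct minimizing geodesics $\gamma_i, \sigma_i$. Using the compactness of bounded sets of geodesics (Lemma~\ref{lem_factsAS}\ref{lem_geodLimit}), pass to subsequences so $\gamma_i \to \bar\gamma$ and $\sigma_i \to \bar\sigma$, two minimizing geodesics from $p$ to $q$. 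The key case distinction: if we can arrange $\bar\gamma \neq \bar\sigma$, then both are conjugate-point-free, so near each of them $\exp_p$ is injective; but then for large $i$, $\gamma_i$ must lie in $U_{\bar\gamma}$ and $\sigma_i$ in $U_{\bar\sigma}$, and since $\exp_p$ is a local homeomorphism near $\bar\gamma$ we can find a geodesic in $U_{\bar\gamma}$ ending at $q$ other than $\bar\gamma$ only if $q$ has two preimages there — contradicting injectivity on $U_{\bar\gamma}$ unless that second preimage is $\bar\sigma \notin U_{\bar\gamma}$, which is fine; the contradiction instead comes from noting $q$ itself then has a whole curve of preimages or that $q_i \in \Cut(p)$ pushes back to $q \in \Cut(p)$, already handled. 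The genuinely delicate case is $\bar\gamma = \bar\sigma =: c$: then the two distinct geodesics $\gamma_i, \sigma_i$ both converge to the \emph{same} $c$, and since $q$ is not conjugate to $p$ along $c$, $\exp_p$ is injective near $c$, so for large $i$ the curves $\gamma_i$ and $\sigma_i$ — both lying in that neighborhood and both ending at $q_i$ — must coincide, contradicting $\gamma_i \neq \sigma_i$. This forces $\bar\gamma \neq \bar\sigma$; combined with the first case we must have $\{\bar\gamma, \bar\sigma\} = \{\gamma, \sigma\}$ (the reversed roles of the original two geodesics), and the non-conjugacy of $q$ along $\gamma$ and $\sigma$ then says each is a local homeomorphism point, so the concatenation $\gamma$ followed by the reversal of $\sigma$ must be a \emph{local} geodesic at $q$ (otherwise $q_i$ near $q$ would fail to be a cut point), i.e.\ $\gamma$ and $\sigma$ fit together smoothly at $q$ into a local geodesic from $p$ to $p$ of length $2L = 2\inj(p)$ with midpoint $q$, which is the second alternative of the conclusion.

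The step I expect to be the main obstacle is making the "fit together into a local geodesic" conclusion rigorous in the purely metric setting, since we do not have a differentiable structure — the argument that $\gamma$ and $\sigma$ cannot meet at $q$ at an angle strictly less than $\pi$ without producing cut points strictly before $q$ (which would contradict $d(p,q) = \inj(p)$ being the \emph{minimal} such distance) is exactly where the extendability-of-geodesics and non-branching features of the space are implicitly used in the Riemannian proof, and here one must substitute a short-cut/first-variation style argument phrased entirely via comparison angles and the $1$-Lipschitz endpoint map $\exp_p$. Concretely, I would argue: if $\gamma$ and $\sigma$ meet at $q$ at comparison angle $<\pi$, one can build, for points slightly before $q$ on $\gamma$, a path to corresponding points slightly before $q$ on $\sigma$ shorter than going through $q$, producing a second geodesic from $p$ to an interior point of $\gamma$ — but that interior point is at distance $<\inj(p)$ from $p$, contradiction. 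Once the angle at $q$ is forced to be $\pi$, non-branching (Lemma~\ref{lem_factsAS}\ref{lem_NonBranchingGeod}) — available here because the ambient setting of the paper's main applications carries a lower curvature bound, though for this lemma stated only for length spaces one instead uses the definition of local geodesic directly — upgrades "$\gamma$ then reversed $\sigma$" to a genuine local geodesic, completing the proof.
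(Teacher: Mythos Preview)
Your reduction via Lemma~\ref{lem_cutLocClosed} to the case $q \in \Cut(p)$, with two distinct geodesics $\gamma, \sigma$ from $p$ to $q$, is correct and matches the paper. The gap is in the step you yourself flag as the main obstacle. Your concrete proposal --- that the shortcut between $\gamma(1-\varepsilon)$ and $\sigma(1-\varepsilon)$ ``produc[es] a second geodesic from $p$ to an interior point of $\gamma$'' at distance $< \inj(p)$ --- does not work: the path $p \to \sigma(1-\varepsilon) \to \gamma(1-\varepsilon)$ has length $(1-\varepsilon)L + d(\gamma(1-\varepsilon),\sigma(1-\varepsilon))$, which is \emph{longer} than the direct route $(1-\varepsilon)L$ along $\gamma$, so no competing geodesic is produced and no cut point appears inside $B_{\inj(p)}(p)$. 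The Riemannian argument you have in mind actually finds a direction at $q$ along which \emph{both} varied geodesics shorten, via first variation; that requires a usable space of directions and a first variation formula, neither of which is available in a bare complete locally compact length space. Your proposed endgame (force angle $\pi$, then invoke non-branching) likewise imports hypotheses the lemma does not make --- angles need not exist here, and Lemma~\ref{lem_factsAS}\ref{lem_NonBranchingGeod} assumes a lower curvature bound. (The middle paragraph with the sequence $q_i \to q$ is also a detour once $q \in \Cut(p)$ is known.)

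The paper's argument replaces all of this with a lifting/connectedness argument in $\tilde{X}_p$, using non-conjugacy in its raw form. If the concatenation is not a local geodesic at $q$, then $d(\gamma(1-\varepsilon),\sigma(1-\varepsilon)) < 2\varepsilon L$ along a null sequence. The geodesic $c$ joining these two points then lies in the closed ball of radius $r := (1-\varepsilon)L + \tfrac{1}{2} d(\gamma(1-\varepsilon),\sigma(1-\varepsilon)) < L = \inj(p)$. Inside that ball, geodesics from $p$ are unique and vary continuously with their endpoints (Remark~\ref{rem_contGeodBelowInj}), so $t \mapsto \alpha_t :=$ (unique geodesic from $p$ to $c(t)$) is a continuous path in $\tilde{X}_p$ from $\gamma|_{[0,1-\varepsilon]} \in U_\gamma$ to $\sigma|_{[0,1-\varepsilon]} \in U_\sigma$, where $U_\gamma$, $U_\sigma$ are the disjoint neighborhoods on which $\exp_p$ is a homeomorphism. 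An open--closed argument shows $\alpha_t = (\exp_p|_{U_\gamma})^{-1}(c(t))$ for all $t$, so the path never leaves $U_\gamma$ --- contradicting $\alpha_1 \in U_\sigma$. No angles, no first variation, no non-branching are used.
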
\com{If we replace ``$ q $ conjugate to $ p $'' by ``some point $ r $ is conjugate to $ p $ with $ d(r,p)\leq \MinRad(p) $'' this holds for $ \MinRad $ instead of $ \inj $.}
\begin{proof}
	Assume that $ q $ is not conjugate to $ p $. Then by Lemma~\ref{lem_cutLocClosed}, $ q $ is a cut point of $ p $ and there exist two distinct geodesics $ \gamma, \sigma: [0,1]\to X$ joining $ p $ to $ q $. We want to show that the concatenation of $ \gamma $ and the reversal $ -\sigma $ is a local geodesic (of length $ 2 d(p,q) $), that is, there exists $ \varepsilon>0 $ with $ d(\gamma(1-\varepsilon),\sigma(1-\varepsilon))=2\varepsilon d(p,q) $.
	Assume, on the contrary, that there exists a null sequence $ (\varepsilon_i)_{i\in\IN} $ such that, for all $ i\in \IN $, we have
	\begin{equation}\label{eq_klingNotAnglePi}
		d\left(\gamma(1-\varepsilon_i),\sigma(1-\varepsilon_i)\right)<2\varepsilon_i d(p,q).
	\end{equation}
	Let $ c_i:[0,1]\to X $ be a geodesic that joins $ \gamma(1-\varepsilon_i) $ to $ \sigma(1-\varepsilon_i) $. Since $ q $ is not conjugate to $ p $, there exist neighborhoods $ U_\gamma, U_\sigma\subseteq \tilde{X}_p $ at $ \gamma,\sigma $ such that the restriction of the endpoint map on those neighborhoods is a homeomorphism onto its image. We can choose $ U_\gamma $ and $ U_\sigma $ to be disjoint and $ i\in\IN $ sufficiently large such that the image of $ c_i $ is contained in $\exp_p(U_\gamma)$ and $ \exp_p(U_\sigma) $. We choose a radius $$ r:=(1-\eps_i)d(p,q)+\frac12 \cdot d\left(\gamma(1-\varepsilon_i),\sigma(1-\varepsilon_i)\right). $$ By the triangle inequality, $ c_i $ has image in the closed ball of radius $ r $ at $ p $. We conclude with \eqref{eq_klingNotAnglePi} that $ r<d(p,q)=\inj(p) $ and hence in this ball geodesics issuing in $ p $ are unique and vary continuously with their endpoints (see Remark~\ref{rem_contGeodBelowInj}). Therefore, the geodesics $ \alpha_t:[0,1]\to X$ that join $ p $ to $ c_i(t) $ are unique and $ t\mapsto \alpha_t $ defines a continuous curve in $ \tilde{X}_p $ which joins (the linear reparametrizations of) $ \gamma|_{[0,1-\eps_i]} $ in $ U_\gamma $ to $ \sigma|_{[0,1-\eps_i]} $ in $ U_\sigma $. With the homeomorphism $ \exp_p|_{U_\gamma} $ the curves $ \alpha_t $ must agree with the curves $ (\exp_p|_{U_\gamma})^{-1}(c_i(t)) $ as long as $ \alpha_t\in U_\gamma $ (see Figure~\ref{fig_endpointMapKlingenberg} for illustration). But this implies that the curve $ t\mapsto \alpha_t $ cannot leave $ U_\gamma $, a contradiction.
	% to it terminating in $ \sigma|_{[0,1-\eps_i]}\in U_\sigma $
	The main statement follows. 
	\begin{figure}[h]
		\centering
		\begin{minipage}[t]{0.38\textwidth}
			\vspace*{\fill}
			\centering
			\vfill
			\includegraphics[width=0.75\linewidth]{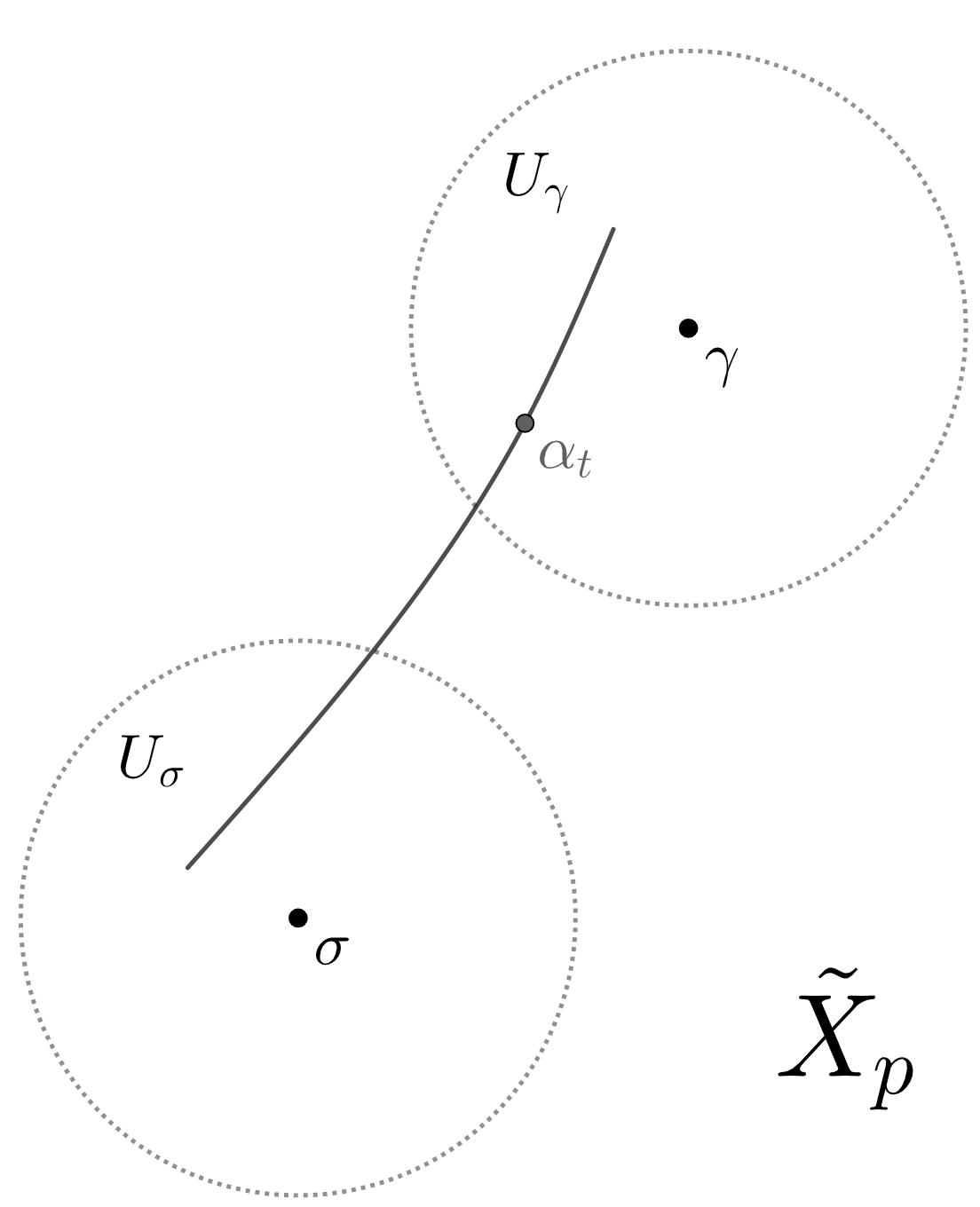}
			\vfill
		\end{minipage}
		\qquad
		\begin{minipage}[t]{0.1\textwidth}
			\vspace*{2cm}
			\centering
			%\vfill
			\huge{$ \overset{\exp_p}{\longrightarrow} $}
			%\vfill
		\end{minipage}
		\qquad
		\begin{minipage}[t]{0.38\textwidth}
			\vspace*{\fill}
			\centering
			\vfill
			\includegraphics[width=0.9\linewidth]{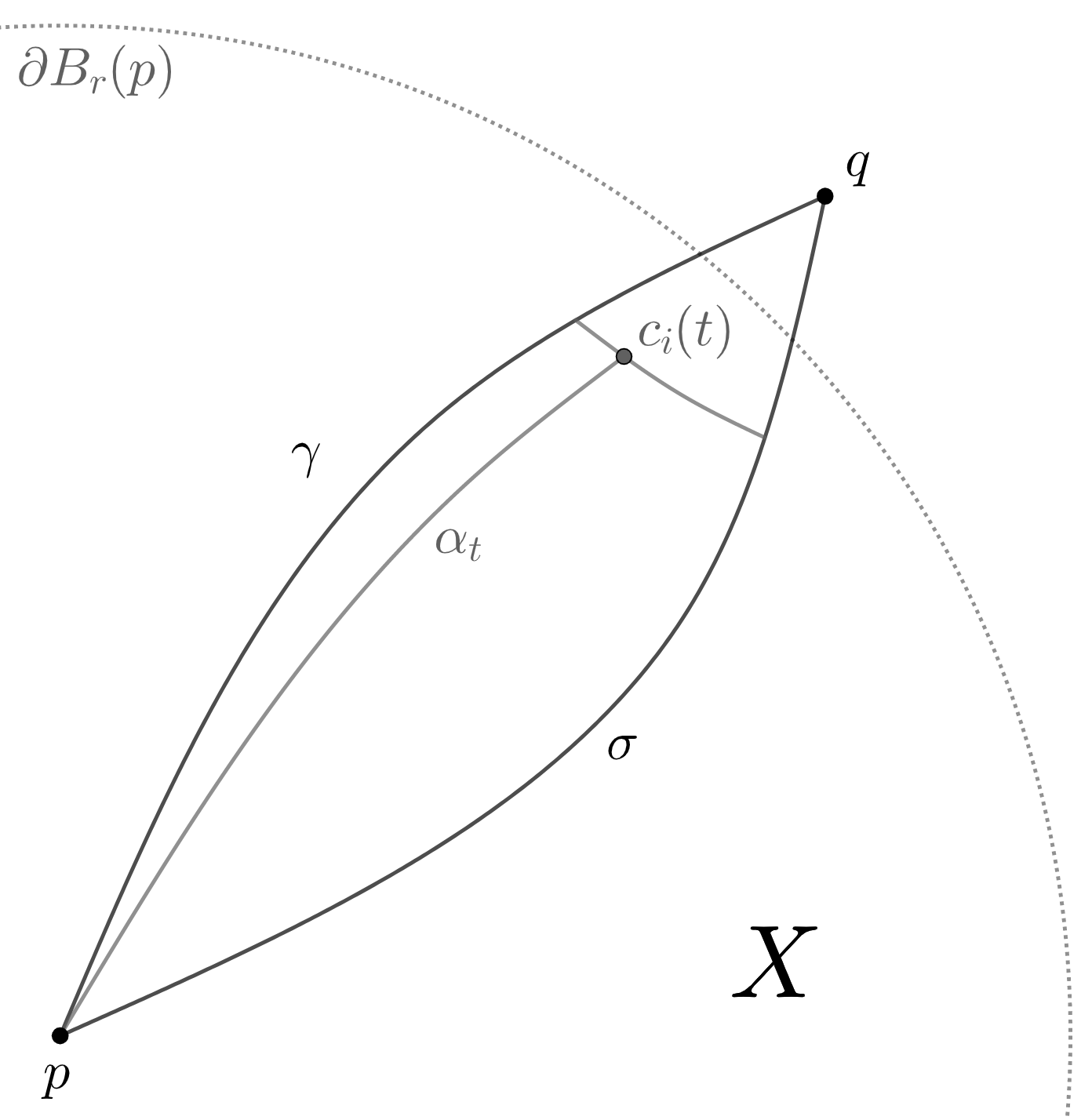}
			\vfill
		\end{minipage}
		\caption{The setup in the proof of Lemma~\ref{lem_Klingenberg} leading to a contradiction}\label{fig_endpointMapKlingenberg}
	\end{figure}
	To prove the last claim consider $ J:=\{t\in[0,1]: \alpha_t = (\exp_p|_{U_\gamma})^{-1}(c_i(t)) \} $. Note that $ 0\in J $ and by continuity of the involved maps, $ J $ is closed in $ [0,1] $. And since $ U_\gamma $ is open and $ \exp_p|_{U_\gamma} $ is injective, $ J $ is also open. Consequently, $ J=[0,1] $ and hence $ \alpha_t\in U_\gamma $ for all $ t\in[0,1] $.
\end{proof}
If the injectivity radius or the minimal radius is determined by a local geodesic loop, we have a much better control over it. This will be crucial to Theorem~\ref{thm_injSoul}.

\subsection{The Injectivity Radius Before Conjugate Points}

We can now establish some properties of the injectivity radius in the absence of conjugate points.
The following two statements were similarly stated in \cite[2.16+2.18]{Bow} and \cite[7.15]{ShSo}.
\begin{cor}\label{cor_ineqInjMinRad}
	Let $ X $ be a complete, locally compact length space and $ p\in X $. Then the inequality $$ \inj(p) \leq \MinRad(p) $$ holds whenever one of those values is less than $ \conj(X) $.
	%if there is a non-conjugate point in $ \overline{MinCut(p)} $ that realizes $ \MinRad(p) $\com{see \cite[7.16]{ShSo}}
\end{cor}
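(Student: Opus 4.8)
The plan is to deduce this quickly from Lemma~\ref{lem_MinCutminCutConj}. First I would split the argument according to whether or not $\MinRad(p) < \conj(X)$. If $\MinRad(p) \geq \conj(X)$, then the hypothesis (that one of the two radii lies below $\conj(X)$) forces $\inj(p) < \conj(X) \leq \MinRad(p)$, and the desired inequality is immediate. So the essential case is $\MinRad(p) < \conj(X)$, which I would treat next.

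In that case I would take an arbitrary $q \in \MinCut(p)$ with $d(p,q) < \conj(X)$; since $\MinRad(p)$ is by definition the infimum of $d(p,\cdot)$ over $\MinCut(p)$ and $\MinRad(p) < \conj(X)$, such points exist and the infimum of their distances to $p$ is exactly $\MinRad(p)$. The key claim is that every such $q$ already lies in $\Cut(p)$. Indeed, if $q \in \MinCut(p)\setminus\Cut(p)$, then by (the proof of) Lemma~\ref{lem_MinCutminCutConj} the point $q$ is conjugate to $p$ along the minimizing geodesic segment from $p$ to $q$, whose length is $d(p,q)$; hence $\conj(p) \leq d(p,q) < \conj(X) \leq \conj(p)$, a contradiction. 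Thus $q \in \Cut(p)$, so $\inj(p) = d(p,\Cut(p)) \leq d(p,q)$, and passing to the infimum over all such $q$ yields $\inj(p) \leq \MinRad(p)$.

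I do not anticipate a real obstacle here; the one point that needs care is that the local geodesic witnessing conjugacy in Lemma~\ref{lem_MinCutminCutConj} is the \emph{minimizing} part of the local geodesic running through $q$, and therefore has length exactly $d(p,q)$ — this is precisely what makes the comparison with $\conj(X)$ clean. I would also be slightly careful to argue via the defining infimum rather than via a single distance-realizing point, since $\MinCut(p)$ need not be closed.
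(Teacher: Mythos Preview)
Your proof is correct and follows essentially the same route as the paper's own argument: reduce to the case $\MinRad(p)<\conj(X)$ (handling the other case trivially via the hypothesis), and then use Lemma~\ref{lem_MinCutminCutConj} to conclude $\MinCut(p)\cap B_{\conj(X)}(p)\subseteq\Cut(p)$, from which $\inj(p)\leq\MinRad(p)$ follows by taking the infimum. Your extra care in identifying that the conjugacy in Lemma~\ref{lem_MinCutminCutConj} occurs along the \emph{minimizing} segment of length $d(p,q)$, and in working with the infimum rather than a realizing point, is well placed but does not deviate from the paper's strategy.
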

\begin{proof}
	We can assume $ \MinRad(p)<\conj(X) $. Otherwise, the stated inequality already holds true or both values are greater than $ \conj(X) $. By Lemma~\ref{lem_MinCutminCutConj} we have $ \MinCut(p)\cap B_{\conj(X)}(p)\subseteq \Cut(p) $ and the statement follows.
\end{proof}
\com{It may be possible to generalize the inequality to hold independently of $ \conj(X) $. One would have to answer the following question. Are conjugate points along a geodesic in the boundary of the cut locus? Or: Give two distinct local geodesics joining the same points an upper bound for $ \inj $ as in the following but independently of $ \conj(X) $? If not, can we find an example of $ \inj(p) > \MinRad(p) $}
\begin{cor}\label{cor_injBoundLocGeod}
	Let $ X $ be a complete, locally compact length space and $ p\in X $. If there exist $ q\in X $ and two distinct local geodesics $ \gamma $ and $\sigma $ joining $ p $ to $ q $, then the inequality \[\inj(p)\leq \frac{L(\gamma)+L(\sigma)}{2} \]
	holds whenever $ \inj(p) $ or $ \frac{L(\gamma)+L(\sigma)}{2} $ is less than $ \conj(X) $.
\end{cor}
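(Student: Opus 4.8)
The plan is to reduce the statement to the Klingenberg Lemma (Lemma~\ref{lem_Klingenberg}) by working at the point where $\inj(p)$ is realized. First I would dispose of the trivial cases: if $\inj(p) \geq \conj(X)$ and $\frac{L(\gamma)+L(\sigma)}{2} \geq \conj(X)$ the hypothesis is vacuous, so we may assume at least one of these quantities is $<\conj(X)$; combined with Corollary~\ref{cor_ineqInjMinRad}-style reasoning one sees it suffices to treat the case $\inj(p) < \conj(X)$ and $\inj(p) < \infty$ (if $\inj(p)=\infty$ then $\Cut(p)=\emptyset$, but the two distinct local geodesics $\gamma,\sigma$ force $\MinCut(p)$ or $\Cut(p)$ to be nonempty near their common endpoint — more precisely, concatenating one with the reversal of the other yields a local geodesic that fails to be minimizing, so $\MinCut(p)\ne\emptyset$, and then $\inj(p) < \conj(X)$ via Lemma~\ref{lem_MinCutminCutConj}, contradiction). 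So assume $0 < \inj(p) < \conj(X)$.

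Next I would locate a point $q_0 \in \overline{\Cut(p)}$ realizing the injectivity radius: since $X$ is proper (Lemma~\ref{lem_factsAS}~\ref{lem_hopfRinow}), $\Cut(p)$ is nonempty (otherwise $\inj(p)=\infty$) and the distance $d(p,\Cut(p))$ is attained by some $q_0 \in \overline{\Cut(p)}$ with $d(p,q_0)=\inj(p)$. Because $\inj(p) < \conj(X)$, the point $q_0$ cannot be conjugate to $p$ — any conjugate point along a geodesic issuing in $p$ would have to lie at distance $\geq \conj(X)$ by definition of the conjugate radius, or at least the relevant local geodesic would have length $\geq \conj(X)$, contradicting $d(p,q_0)=\inj(p) < \conj(X)$. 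Hence by Lemma~\ref{lem_Klingenberg}, $q_0$ is the midpoint of a local geodesic $\delta$ from $p$ to $p$ of length $2\inj(p)$.

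The final step is to compare this loop $\delta$ with the given pair $\gamma,\sigma$. The loop $\delta$ splits as a concatenation $\delta = \delta_1 \ast (-\delta_2)$ of two geodesics from $p$ to $q_0$, each of length $\inj(p)$; since $\delta$ is a nontrivial local geodesic and does not simply retrace itself, $\delta_1 \neq \delta_2$, so $q_0 \in \Cut(p)$ is joined to $p$ by two distinct geodesics, confirming $\inj(p) = d(p, q_0)$ with the two realizing geodesics of equal length $\inj(p)$. Now the inequality $\inj(p) \leq \frac{L(\gamma)+L(\sigma)}{2}$ follows because $q:=\gamma(1)=\sigma(1)$ is a point joined to $p$ by two distinct \emph{local} geodesics $\gamma,\sigma$: concatenating $\gamma$ with $-\sigma$ produces a closed local geodesic (or, where that concatenation fails to be locally geodesic, a point strictly closer on $\gamma$ or $\sigma$ that already lies in $\MinCut(p)$), giving a point of $\MinCut(p)$ at distance at most $\max\{L(\gamma), L(\sigma)\}$ from $p$ along one of them; a sharper bookkeeping — tracking the first parameter at which $\gamma$ and $\sigma$ separate and applying Lemma~\ref{lem_MinCutminCutConj} together with $\inj(p)<\conj(X)$ — yields the averaged bound $\frac{L(\gamma)+L(\sigma)}{2}$, exactly as in the Riemannian argument. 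The main obstacle I anticipate is this last bookkeeping step: one must argue carefully that the relevant point on the ``short side'' of the two local geodesics lands in $\MinCut(p)$ (not merely that some closed local geodesic exists), and then convert a $\MinCut$ bound into an $\inj$ bound, which is precisely where the hypothesis ``$\inj(p)$ or $\frac{L(\gamma)+L(\sigma)}{2}$ is less than $\conj(X)$'' is consumed via Lemma~\ref{lem_MinCutminCutConj}.
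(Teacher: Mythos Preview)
Your detour through Lemma~\ref{lem_Klingenberg} contributes nothing: you produce a loop $\delta$ of length $2\inj(p)$, but never use it to relate $\inj(p)$ to $L(\gamma)$ and $L(\sigma)$. The whole content of the corollary is the step you call ``sharper bookkeeping'' and then do not carry out. Moreover, the claim that ``concatenating $\gamma$ with $-\sigma$ produces a closed local geodesic'' is false in general --- nothing forces the concatenation to be locally geodesic at $p$ or at $q$ --- and the fallback you offer (``a point strictly closer \dots\ already lies in $\MinCut(p)$'') does not follow from a failure of local geodesy at the junction either. Your reduction in the first paragraph inherits this error.

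The paper's proof is short and does not use Lemma~\ref{lem_Klingenberg} at all. If both $\gamma$ and $\sigma$ are minimizing, then $q\in\Cut(p)$ and $\inj(p)\le d(p,q)=L(\gamma)=(L(\gamma)+L(\sigma))/2$. Otherwise the longer one, say $\gamma$, fails to minimize; for any arc-length $t>(L(\gamma)+L(\sigma))/2$ along $\gamma$ the path ``$\sigma$ to $q$, then back along $\gamma$'' has length $L(\sigma)+(L(\gamma)-t)<t$, so $\gamma$ already stops minimizing at some arc-length $\le (L(\gamma)+L(\sigma))/2$. This puts a point of $\MinCut(p)$ at distance $\le (L(\gamma)+L(\sigma))/2$, i.e.\ $\MinRad(p)\le (L(\gamma)+L(\sigma))/2$, and then Corollary~\ref{cor_ineqInjMinRad} converts this into $\inj(p)\le\MinRad(p)$ under exactly the stated hypothesis. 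Note the $\MinCut$ point lies on the \emph{longer} side, not the short side as you suggest, and the ``first parameter at which $\gamma$ and $\sigma$ separate'' plays no role.
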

\begin{proof}
	In case $ \gamma $ and $ \sigma $ are geodesic we have $ \inj(p)\leq L(\gamma)= (L(\gamma)+L(\sigma))/2$. Now assume one of them, say $ \gamma $, is not geodesic and without loss of generality longer than $ \sigma $. It cannot be geodesic past the length $ (L(\gamma)+L(\sigma))/2 $, since the concatenation of $ \sigma $ and the end part of $ \gamma $ is shorter. Corollary~\ref{cor_ineqInjMinRad} implies that $ \inj(p)\leq \MinRad(p) \leq (L(\gamma)+L(\sigma))/2 $.
\end{proof}

To conclude the section we list two consequences of Lemma~\ref{lem_Klingenberg} in the absence of conjugate points.
\begin{cor}\label{cor_injSubsets}
	Let $ X $ be a complete, locally compact length space, $ C\subseteq X $ be a closed totally convex subset and $ p\in C $. If $ \inj_X(p)\in(0,\conj(X)) $, then $$ \inj_X (p)= \inj_C(p). $$
\end{cor}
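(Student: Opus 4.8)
The plan is to establish the two inequalities $\inj_C(p)\geq \inj_X(p)$ and $\inj_C(p)\leq \inj_X(p)$ separately. The starting point is a structural remark: since $X$ is proper (Lemma~\ref{lem_factsAS}~\ref{lem_hopfRinow}), any two points of $C$ are joined by a minimizing geodesic of $X$, and by total convexity its image lies in $C$; hence the intrinsic metric of $C$ agrees with the restriction of $d$, and consequently the (local) geodesics of $C$ are exactly the (local) geodesics of $X$ whose image is contained in $C$. In particular two distinct geodesics of $C$ from $p$ to a point $q$ are two distinct geodesics of $X$, so $\Cut_C(p)\subseteq\Cut_X(p)$ and therefore $\inj_C(p)\geq\inj_X(p)$. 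Moreover, restricting the endpoint map $\exp_p$ of $X$ to $\tilde{C}_p\subseteq\tilde{X}_p$ shows that a point conjugate to $q$ in $C$ is conjugate to $q$ in $X$, whence $\conj_C(q)\geq\conj_X(q)$ for every $q\in C$ and thus $\conj(C)\geq\conj(X)>\inj_X(p)$; I will use this when invoking Corollary~\ref{cor_injBoundLocGeod} inside $C$.

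For the reverse inequality I would produce inside $C$ a short local geodesic loop at $p$. Since $\inj_X(p)<\conj(X)$ is finite, $\Cut_X(p)$ is nonempty, and as $X$ is proper there is $q\in\overline{\Cut_X(p)}$ with $d(p,q)=\inj_X(p)$. If $q$ were not itself a cut point, then by Lemma~\ref{lem_cutLocClosed} (and its proof) $q$ would be conjugate to $p$ along a minimizing geodesic of length $\inj_X(p)$, forcing $\conj_X(p)\leq\inj_X(p)$ and contradicting $\inj_X(p)<\conj(X)\leq\conj_X(p)$. Hence $q\in\Cut_X(p)$, so there are two distinct minimizing geodesics $\gamma,\sigma\colon[0,1]\to X$ from $p$ to $q$. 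I then apply Lemma~\ref{lem_Klingenberg}: the conjugacy alternative is excluded just as above (using $\inj_X(p)<\conj(X)$), and since $L(\gamma)=L(\sigma)=\inj_X(p)<\conj_X(p)$ the endpoint map is a local homeomorphism near $\gamma$ and near $\sigma$, so the construction in the proof of that lemma shows that the concatenation $\ell$ of $\gamma$ with the reversal of $\sigma$ is a local geodesic loop at $p$ of length $2\inj_X(p)$ with midpoint $q$.

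Finally I would transport this loop into $C$. Both endpoints of $\ell$ equal $p\in C$, so by total convexity the image of $\ell$ lies in $C$, and by the first paragraph $\ell$ is a local geodesic loop of $C$; its two halves are $\gamma$ and $\sigma$, viewed now as distinct local geodesics of $C$ from $p$ to $q$, each of length $\inj_X(p)$. Since $C$ is a complete, locally compact length space (being a closed, totally convex subset of $X$) and $\tfrac{1}{2}\bigl(L(\gamma)+L(\sigma)\bigr)=\inj_X(p)<\conj(C)$, Corollary~\ref{cor_injBoundLocGeod} applied in $C$ gives $\inj_C(p)\leq\inj_X(p)$. Combining with the first paragraph yields $\inj_C(p)=\inj_X(p)$.

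I expect the main obstacle to be the careful treatment of the conjugacy alternative in Lemma~\ref{lem_Klingenberg}: one must observe that the two geodesics realizing $q\in\Cut_X(p)$ have length exactly $\inj_X(p)$, strictly below $\conj_X(p)$, so that the endpoint map is a local homeomorphism near them and the loop construction of that proof applies verbatim — the conclusion "$q$ is conjugate to $p$" is an obstruction only if the conjugacy occurs along a geodesic of length $\leq\inj_X(p)$, which $\inj_X(p)<\conj(X)$ rules out. A secondary point to get right is the identification of the intrinsic metric of $C$ with the restricted metric, so that "geodesic of $C$" genuinely means "geodesic of $X$ with image in $C$"; this is precisely where properness of $X$ is used.
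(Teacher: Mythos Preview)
Your argument is correct and follows the same route as the paper: find $q\in\overline{\Cut_X(p)}$ realizing $\inj_X(p)$, rule out the conjugate alternative in Klingenberg's lemma via $\inj_X(p)<\conj(X)$, obtain a local geodesic loop at $p$ of length $2\inj_X(p)$, and push it into $C$ by total convexity. The only difference is that you take a small detour at the end: the two halves $\gamma,\sigma$ of the loop are already \emph{minimizing} geodesics (in $X$, hence in $C$), so their common endpoint $q$ lies in $\Cut_C(p)$ and $\inj_C(p)\le d(p,q)=\inj_X(p)$ follows directly from the definition; invoking Corollary~\ref{cor_injBoundLocGeod} and the auxiliary estimate $\conj(C)\ge\conj(X)$ is not needed.
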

\begin{proof}
	By definition we have $ \inj_X(p)\leq\inj_C(p) $. Since $ X $ is proper, there exists a $ q\in \overline{\Cut_X(p)} $ with $ \inj_X(p)=d(p,q) $ and by assumption $ \inj_X(p)>0 $. This allows us to apply Lemma~\ref{lem_Klingenberg}.
	Therefore, $ \inj_X(p) $ is realized by half the length of a local geodesic $ \gamma $ issuing and terminating in $ p $. By total convexity, $ \gamma $ is contained in $ C $ and hence 
	\begin{equation*}
		\inj_C(p)\leq \frac{1}{2} L(\gamma) = \inj_X(p). \qedhere
	\end{equation*}
\end{proof}

\begin{cor}\label{cor_injMinRad}
	Let $ X $  be a complete, locally compact length space and $ p\in X $. If $ \inj(p)\in(0,\conj(X)) $, then $$ \inj(p)=\MinRad(p). $$\com{Without the assumption we could have $ \inj(p)=0  $ but $ \MinRad(p)>0 $ positive. Example?} 
\end{cor}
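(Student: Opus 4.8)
The plan is to establish the two inequalities $\inj(p)\le\MinRad(p)$ and $\MinRad(p)\le\inj(p)$ separately. The first one is free: since $\inj(p)<\conj(X)$, Corollary~\ref{cor_ineqInjMinRad} applies verbatim and gives $\inj(p)\le\MinRad(p)$. So the substance of the statement is the reverse inequality, and for that I would exhibit a point of the minimal cut locus at distance at most $\inj(p)$ from $p$.

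First I would note that $\inj(p)<\conj(X)$ forces $\inj(p)<\infty$, hence $\Cut(p)\ne\emptyset$, and since $X$ is proper (Lemma~\ref{lem_factsAS}~\ref{lem_hopfRinow}) there is a point $q\in\overline{\Cut(p)}$ with $d(p,q)=\inj(p)>0$. This is exactly the hypothesis of the Klingenberg Lemma~\ref{lem_Klingenberg}. Its conjugate alternative is ruled out by the standing assumption, just as in the proof of Corollary~\ref{cor_injSubsets}: a point realizing the distance $\inj(p)$ to $\overline{\Cut(p)}$ that is conjugate to $p$ is conjugate along a \emph{minimizing} geodesic from $p$ to $q$ --- if $q\in\overline{\Cut(p)}\setminus\Cut(p)$ this is how conjugacy arises in the proof of Lemma~\ref{lem_cutLocClosed}, and if $q\in\Cut(p)$ one of the two minimizing geodesics joining $p$ to $q$ does the job --- so that $\conj(X)\le\conj(p)\le d(p,q)=\inj(p)$, a contradiction. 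Therefore Lemma~\ref{lem_Klingenberg} yields a local geodesic loop $\gamma\colon[0,1]\to X$ with $\gamma(0)=\gamma(1)=p$, $\gamma(\tfrac12)=q$, and $L(\gamma)=2\inj(p)$.

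It then remains to check that $q\in\MinCut(p)$ with $t_0=\tfrac12$ playing the role of the parameter in the definition of the minimal cut locus. On the one hand $\gamma|_{[0,1/2]}$ has length $\inj(p)=d(p,q)$, so it is a geodesic. On the other hand, for every $s\in(\tfrac12,1]$ the reversal of $\gamma|_{[s,1]}$ joins $p=\gamma(1)$ to $\gamma(s)$ and has length $2(1-s)\inj(p)$, whence $d(p,\gamma(s))\le 2(1-s)\inj(p)<2s\,\inj(p)=L(\gamma|_{[0,s]})$ and $\gamma|_{[0,s]}$ fails to be a geodesic. Thus $\gamma$ witnesses $q=\gamma(\tfrac12)\in\MinCut(p)$, so $\MinRad(p)\le d(p,q)=\inj(p)$; together with the first inequality this gives $\inj(p)=\MinRad(p)$.

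The step I expect to require the most care is the exclusion of the conjugate alternative in Lemma~\ref{lem_Klingenberg}: one has to make sure that a local geodesic exhibiting conjugacy of the distance-realizing point $q$ can be taken minimizing, so that the length estimate built into the definition of $\conj(X)$ is actually triggered. After that, the rest is the elementary loop-versus-shortcut comparison of the preceding paragraph.
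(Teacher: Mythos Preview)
Your proof is correct and follows the same route as the paper: use Corollary~\ref{cor_ineqInjMinRad} for $\inj(p)\le\MinRad(p)$, then find a distance-realizing $q\in\overline{\Cut(p)}$ by properness, apply the Klingenberg Lemma~\ref{lem_Klingenberg}, rule out the conjugate alternative via $\inj(p)<\conj(X)$, and read off $\MinRad(p)\le\inj(p)$ from the resulting loop. Your explicit verification that $q\in\MinCut(p)$ and your care that the conjugacy in Lemma~\ref{lem_Klingenberg} occurs along a \emph{minimizing} geodesic (so that the bound $\conj(X)\le d(p,q)$ is actually triggered) are details the paper leaves implicit.
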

\begin{proof}
	Corollary~\ref{cor_ineqInjMinRad} gives $ \inj(p)\leq \MinRad(p) $. Analogously to the proof of the previous Corollary~\ref{cor_injSubsets} we can find a point $ q\neq p $ that realizes the injectivity radius of $ p $. Then by Lemma~\ref{lem_Klingenberg}, the injectivity radius is given by half the length of a local geodesic loop which also gives an upper bound for the minimal radius and hence $ \MinRad(p)\leq \inj(p) $. 
\end{proof}

\subsection{Conjugate Points and an Upper Curvature Bound}
An upper curvature bound allows for a better treatment of conjugate points and the injectivity radius.
For one, it ensures nonzero injectivity radius for a given point. \com{$ \inj $ and $ \MinRad $ are bounded below by the smallest CAT($ K $) neighborhood.}\com{Example for $ \inj(p)=0 $ is the doubling of the disk where $ p $ is a boundary point}
\begin{lem}\label{lem_injNonzero}
	Let $ X $ be a metric space of curvature $ \leq K $ and $ p\in X $, then $ \inj(p)>0 $ and $ \MinRad(p)>0 $.
\end{lem}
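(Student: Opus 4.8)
The plan is to exploit directly the defining property of a space of curvature $\leq K$: every point $p$ has a neighborhood $U$ which is a CAT($K$) space. First I would fix such a neighborhood $U$ and pick $r>0$ small enough that the closed ball $\overline{B_r(p)}$ is contained in $U$ and that $r<D_K$. The point of these choices is that inside a CAT($K$) space, Lemma~\ref{lem_factsAS}~\ref{lem_locMinimizing} applies: any two points at distance $<D_K$ are joined by a \emph{unique} geodesic, and this unique geodesic stays in the CAT($K$) neighborhood (one must be slightly careful that the geodesic realizing the distance in $X$ between two points of $B_{r/2}(p)$ actually lies in $U$; this follows because such a geodesic has length $<r$, so by the triangle inequality it never leaves $\overline{B_r(p)}\subseteq U$, and there it must coincide with the unique CAT($K$) geodesic).

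The main body of the argument is then to show $\inj(p)\geq r/2$ and $\MinRad(p)\geq r/2$, which gives the claim. For the injectivity radius: suppose $q\in\Cut_X(p)$, so there are two distinct geodesics $\gamma,\sigma$ in $X$ from $p$ to $q$. If $d(p,q)<r/2$, then both $\gamma$ and $\sigma$ have length $<r/2<r$ and hence image in $\overline{B_r(p)}\subseteq U$; since $U$ is CAT($K$) and $d(p,q)<D_K$, uniqueness of geodesics in $U$ forces $\gamma=\sigma$, a contradiction. Hence every cut point of $p$ is at distance $\geq r/2$, i.e. $\inj(p)\geq r/2>0$. For the minimal radius: suppose $q=\gamma(t_0)\in\MinCut_X(p)$ lies on a nontrivial local geodesic $\gamma:[0,1]\to X$ with $\gamma|_{[0,t_0]}$ a geodesic but $\gamma|_{[0,t]}$ not minimizing for $t>t_0$. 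If $d(p,q)<r/2$ we can choose $t_1\in(t_0,1]$ with $\gamma|_{[0,t_1]}$ still of length $<r$, hence with image in $U$; then $\gamma|_{[0,t_1]}$ is a local geodesic of length $<D_K$ in a CAT($K$) space, so by Lemma~\ref{lem_factsAS}~\ref{lem_locMinimizing} it is in fact a geodesic, contradicting that $\gamma|_{[0,t_1]}$ is not minimizing. Hence $\MinRad(p)\geq r/2>0$ as well.

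I do not expect a serious obstacle here; the only point requiring a little care is the one flagged above, namely ensuring that the geodesics and local geodesics in question genuinely lie inside the CAT($K$) neighborhood $U$ so that the CAT($K$) conclusions of Lemma~\ref{lem_factsAS}~\ref{lem_locMinimizing} are applicable — this is handled purely by the triangle inequality once $r$ is chosen smaller than the "CAT($K$) radius" at $p$ and smaller than $D_K$. One should also remember to note that the same choice of $r$ rules out $p$ being conjugate to itself along the constant geodesic, as remarked in the text after Definition~\ref{def_conj}, but for the present lemma that is not needed; the two displayed inequalities $\inj(p)>0$ and $\MinRad(p)>0$ follow directly from the lower bound $r/2$.
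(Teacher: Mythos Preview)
Your proposal is correct and follows precisely the approach the paper takes: the paper's proof is the two-sentence sketch ``At each point of a space of curvature $\leq K$ there exists a neighborhood which is CAT($K$). The statement then follows from Lemma~\ref{lem_factsAS}~\ref{lem_locMinimizing},'' and your argument is a careful unpacking of exactly this, including the triangle-inequality check that the relevant (local) geodesics stay inside the CAT($K$) neighborhood.
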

\begin{proof}
	At each point of a space of curvature $ \leq K $ there exists a neighborhood which is CAT($ K $). The statement then follows from Lemma~\ref{lem_factsAS}~\ref{lem_locMinimizing}.
\end{proof}
In the previous section the advantages of avoiding conjugate points became apparent. The following shows that with an upper curvature bound they only appear after a certain distance.
This is essentially due to the following. A curve in an Alexandrov space of curvature $ \leq K $ admits a neighborhood which is locally isometric to an open subset of a CAT($ K $) space. This space is constructed by gluing a chain of small CAT($ K $) neighborhoods consecutively together. According to the authors of \cite{AKP_Found}, this is due to Alexander Lytchak.
\begin{prop}[\text{\cite[9.47]{AKP_Found}} Patchwork Along a Curve]\label{prop_Patchwork}
	Let $ X $ be an Alexandrov space of curvature $ \leq K $ and $ c:[a,b]\to X $ a curve. Then there is an Alexandrov space $ \mathcal{N} $ that is CAT($ K $), an open subset $ \hat{\Omega}\subseteq \mathcal{N} $, a curve $ \hat{c}:[a,b]\to \hat{\Omega} $ and an open locally isometric map $ \Phi:\hat{\Omega}\to X $ such that $ \Phi\circ \hat{c} = c $.
\end{prop}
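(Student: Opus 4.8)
The plan is to realise $\mathcal{N}$ as a chain of small CAT($K$) pieces of $X$, glued consecutively along convex seams that follow the curve, as the remark before the statement indicates; the two engines are Reshetnyak's gluing theorem and the fact that a small closed convex ball in a space of curvature $\leq K$ is, with the metric induced from $X$, a complete CAT($K$) space that embeds isometrically and convexly into $X$.

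First one reduces to finitely many pieces. As $c([a,b])$ is compact and every point of $X$ has a CAT($K$) neighbourhood, there is $\eps_0>0$ with $20\eps_0<D_K$ such that for each $q\in c([a,b])$ the ball $\overline{B_{10\eps_0}(q)}$ is a closed convex subset of a CAT($K$) neighbourhood of $q$, carrying the metric induced from $X$. By uniform continuity choose a partition $a=t_0<\dots<t_N=b$ with $\diam c([t_{i-1},t_i])<\eps_0$, put $q_i:=c(t_{i-1})$ and $U_i:=\overline{B_{2\eps_0}(q_i)}$. Since $d(q_i,q_{i+1})<\eps_0$, one checks $B_{\eps_0}(q_i)\cup B_{\eps_0}(q_{i+1})\subseteq \mathrm{int}_X U_i\cap \mathrm{int}_X U_{i+1}$; in particular $c([t_{i-1},t_i])\subseteq \mathrm{int}_X(U_i\cap U_{i+1})$, and $A_i:=\overline{U_i\cap U_{i+1}}$ is a closed, hence complete, convex subset of both $U_i$ and $U_{i+1}$. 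Now glue in a chain: set $\mathcal{N}_1:=U_1$ and $\mathcal{N}_{i+1}:=\mathcal{N}_i\sqcup_{A_i}U_{i+1}$, identifying the copy of $A_i$ in $U_i\subseteq\mathcal{N}_i$ with the copy in $U_{i+1}$ via the identity on $A_i\subseteq X$. By Reshetnyak's gluing theorem each $\mathcal{N}_i$ is a complete CAT($K$) space: inductively $A_i$ stays complete and convex in $\mathcal{N}_i$ because each piece sits convexly and isometrically in the gluing built so far (a standard feature of gluings along complete convex subsets), and the pieces are small enough for the perimeter requirements when $K>0$. Set $\mathcal{N}:=\mathcal{N}_N$, an Alexandrov space that is CAT($K$). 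Let $\hat c:[a,b]\to\mathcal{N}$ read $c(t)$ inside the $i$-th piece when $t\in[t_{i-1},t_i]$; at $t=t_i$ both readings of $c(t_i)\in A_i$ coincide, so $\hat c$ is continuous. Let $\Phi:\mathcal{N}\to X$ be the inclusion $U_i\hookrightarrow X$ on the $i$-th piece; the definitions agree on the seams, so $\Phi$ is well defined, continuous, $1$-Lipschitz, and $\Phi\circ\hat c=c$.

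Next one produces $\hat\Omega$ and checks the local isometry property. The key point is that on a sufficiently small metric ball of $\mathcal{N}$ around any $\hat c(t)$ the map $\Phi$ is an isometry onto a ball of $X$. Indeed $c(t)\in\mathrm{int}_X(U_i\cap U_{i+1})$, so $\hat c(t)$ lies in the seam $A_i$; a point $z\in\mathcal{N}$ close to $\hat c(t)$ has $\Phi(z)\in\mathrm{int}_X(U_i\cap U_{i+1})\subseteq U_i$, and tracking the finitely many seam-identifications a nearby copy can undergo shows that such a $z$ is already the seam point over $\Phi(z)$; hence $\Phi$ is injective on that ball, and since it is $1$-Lipschitz with image containing a small $X$-ball around $c(t)$ (each nearby point of $X$ being hit through its copy in the seam), it restricts there to a distance-preserving bijection onto a ball of $X$. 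Let $\hat\Omega$ be the union of such balls as $t$ ranges over $[a,b]$: it is open, contains $\hat c([a,b])$, and $\Phi|_{\hat\Omega}$ restricts near every point to an isometry onto an open set, hence is an open local isometry with $\Phi\circ\hat c=c$.

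The main obstacle is exactly this last step — making $\Phi$ a genuine local isometry rather than merely non-expanding. Gluing $U_i$ to $U_{i+1}$ along the full overlap $U_i\cap U_{i+1}$ would make the gluing ``seamless'' and $\Phi$ automatically a local isometry, but that set is not complete, so Reshetnyak's theorem does not apply to it; gluing along its closure $A_i$ repairs this, at the price that one must take the pieces generously enough (balls of radius $2\eps_0$ about points $\eps_0$ apart) that $\hat c$ runs through the $X$-interiors of the overlaps and never meets the ``edge'' of a seam, where $\Phi$ would genuinely collapse two sheets of $\mathcal{N}$ onto each other. The remainder is routine bookkeeping for the iterated application of Reshetnyak's theorem and for completeness and geodesicity of the glued space.
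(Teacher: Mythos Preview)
The paper does not give a proof of this proposition; it is quoted from \cite[9.47]{AKP_Found}, and the sentence preceding it merely sketches the construction as ``gluing a chain of small CAT($K$) neighborhoods consecutively together''. Your argument follows exactly this route---compact cover by convex CAT($K$) balls, iterated Reshetnyak gluing along the closed overlaps, and a local-isometry check near the lifted curve---so the approach matches the one the paper attributes to Lytchak and records in \cite{AKP_Found}.

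One point deserves a more careful justification than you give. In the local-isometry step you pass from ``$\Phi$ is injective and $1$-Lipschitz on a small $\mathcal{N}$-ball'' to ``$\Phi$ is a distance-preserving bijection onto an $X$-ball''. Injectivity plus $1$-Lipschitz does not by itself yield an isometry; you still need the reverse inequality $d_{\mathcal{N}}(z_1,z_2)\leq d_X(\Phi(z_1),\Phi(z_2))$. This follows because the $X$-geodesic between $\Phi(z_1)$ and $\Phi(z_2)$ stays inside the convex overlap of the relevant $U_j$'s and therefore lifts through the seam identifications to a path of the same length in $\mathcal{N}$, but you should say so explicitly. Similarly, the sentence ``tracking the finitely many seam-identifications a nearby copy can undergo'' hides the reason a small $\mathcal{N}$-ball about $\hat c(t)$ meets only pieces $U_j$ with $|j-i|$ bounded: any path from $\hat c(t)$ to a point in a distant piece must cross each intermediate seam, which forces a lower bound on its length. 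These are routine to fill in, and with them the argument is complete.
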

Recall that, local geodesics of length $ <D_K $ in a CAT($ K $) space are unique between their endpoints and vary continuously with their endpoints. We note consequences of the patchwork along a curve.
By a continuous family of curves $ (\gamma_t)_{t\in M} $ for some metric space $ M $ we denote a continuous map $ t\mapsto \gamma_t $ w.r.t.\ the supremum metric on the space of curves.
\begin{prop}[\text{\cite[9.46]{AKP_Found}}]\label{prop_symmConjPointsAfterPi}
	Let $ X $ be an Alexandrov space of curvature $ \leq K $ and $ \gamma:[0,1]\to X $ a local geodesic of length $ <D_K $. Then there exist neighborhoods $ U_0 $ at $ \gamma(0) $ and $ U_1 $ at $ \gamma(1) $ and a unique continuous family of local geodesics $ (\gamma_{p,q})_{(p,q)\in U_0\times U_1} $ with $ \gamma_{\gamma(0),\gamma(1)}=\gamma $.
\end{prop}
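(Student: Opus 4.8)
The plan is to reduce the statement to the CAT($K$) case by means of the Patchwork Along a Curve theorem (Proposition~\ref{prop_Patchwork}), where everything is supplied by the facts in Lemma~\ref{lem_factsAS}~\ref{lem_locMinimizing}. First I would apply Proposition~\ref{prop_Patchwork} to the local geodesic $c:=\gamma$, obtaining a CAT($K$) space $\mathcal{N}$, an open set $\hat\Omega\subseteq\mathcal{N}$, a curve $\hat c\colon[0,1]\to\hat\Omega$ and an open locally isometric map $\Phi\colon\hat\Omega\to X$ with $\Phi\circ\hat c=\gamma$. Since $\Phi$ is a local isometry, $\hat c$ is a local geodesic with $L(\hat c)=L(\gamma)<D_K$, so by Lemma~\ref{lem_factsAS}~\ref{lem_locMinimizing} it is the \emph{unique} minimizing geodesic in $\mathcal{N}$ between $\hat c(0)$ and $\hat c(1)$ (note $d(\hat c(0),\hat c(1))=L(\hat c)<D_K$), and geodesics between nearby points vary continuously with their endpoints. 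This yields neighbourhoods $\hat W_i\ni\hat c(i)$ in $\mathcal{N}$ and a continuous family $(\hat\gamma_{\hat p,\hat q})_{(\hat p,\hat q)\in\hat W_0\times\hat W_1}$ of minimizing (hence local) geodesics with $\hat\gamma_{\hat c(0),\hat c(1)}=\hat c$.

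Next, since $\hat c([0,1])$ is compact and contained in the open set $\hat\Omega$, a tube argument around $\hat c$ lets me shrink $\hat W_0,\hat W_1$ so that every $\hat\gamma_{\hat p,\hat q}$ has image in $\hat\Omega$; shrinking once more I choose $\hat V_i\subseteq\hat W_i$ on which $\Phi$ restricts to an isometry onto an open neighbourhood $U_i:=\Phi(\hat V_i)$ of $\gamma(i)$. For $(p,q)\in U_0\times U_1$ I then set $\gamma_{p,q}:=\Phi\circ\hat\gamma_{\hat p,\hat q}$, where $\hat p,\hat q$ are the $\Phi$-preimages of $p,q$ in $\hat V_0,\hat V_1$. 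Because $\Phi$ is a local isometry, each $\gamma_{p,q}$ is a local geodesic in $X$ running from $p$ to $q$; it depends continuously on $(p,q)$ since $(\Phi|_{\hat V_i})^{-1}$ and $\Phi$ are continuous and $\hat\gamma$ varies continuously; and $\gamma_{\gamma(0),\gamma(1)}=\Phi\circ\hat c=\gamma$.

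For uniqueness I would establish a path-lifting property for $\Phi$ along $\gamma$. Since $\Phi$ is a local isometry, every point of $\hat\Omega$ has a ball on which $\Phi$ is an isometry onto its image, and by compactness of $\hat c([0,1])$ these can be taken of a uniform radius $\delta>0$. Then any curve $\sigma\colon[0,1]\to X$ with $\sup_t d(\sigma(t),\gamma(t))<\delta$ lifts uniquely to a curve $\hat\sigma$ in $\hat\Omega$ with $\sup_t d(\hat\sigma(t),\hat c(t))<\delta$ by piecing together the local inverses of $\Phi$; if $\sigma$ is a local geodesic then so is $\hat\sigma$, and (using Lemma~\ref{lem_factsAS}~\ref{lem_lengthContLocGeod} to keep $L(\hat\sigma)=L(\sigma)<D_K$ once $\sigma$ is close enough to $\gamma$) $\hat\sigma$ is the unique minimizing geodesic in $\mathcal{N}$ joining its endpoints. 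Hence a local geodesic $\delta$-close to $\gamma$ is determined by its endpoints. Given a second continuous family $(\gamma'_{p,q})$ with $\gamma'_{\gamma(0),\gamma(1)}=\gamma$, continuity of both families at $(\gamma(0),\gamma(1))$ lets me shrink $U_0,U_1$ so that all $\gamma_{p,q},\gamma'_{p,q}$ lie within $\delta$ of $\gamma$; since $\gamma'_{p,q}$ also runs from $p$ to $q$, its lift has the same endpoints $\hat p,\hat q$ as the lift of $\gamma_{p,q}$, and the rigidity just proved forces $\gamma'_{p,q}=\gamma_{p,q}$.

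I expect the delicate points to be this lifting step — making precise that the open local isometry $\Phi$ admits a uniform radius $\delta$ over the compact set $\hat c([0,1])$ on which it inverts, and that curves $\delta$-close to $\gamma$ lift uniquely — together with the tube argument confining the nearby geodesics $\hat\gamma_{\hat p,\hat q}$ to $\hat\Omega$. Once these are in place, the statement is a routine transport of the CAT($K$) facts in Lemma~\ref{lem_factsAS}.
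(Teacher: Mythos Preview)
The paper does not give its own proof of this proposition; it is quoted from \cite[9.46]{AKP_Found} and presented as a consequence of the Patchwork Along a Curve (Proposition~\ref{prop_Patchwork}). Your argument is precisely such a derivation via patchwork and is correct: lift $\gamma$ to the CAT($K$) space $\mathcal{N}$, use Lemma~\ref{lem_factsAS}~\ref{lem_locMinimizing} there to get the unique continuously varying geodesic family, confine it to $\hat{\Omega}$ by a tube argument, push it down with $\Phi$, and recover uniqueness by a uniform local lifting along the compact image $\hat{c}([0,1])$. The points you flag as delicate (uniform radius for local inverses of $\Phi$ over a compact set, and the tube argument) are routine compactness arguments once stated carefully, so your outline matches both the paper's intent and the standard proof.
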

\begin{cor}[\text{\cite[Theorem~3]{AB}}]\label{cor_conjPointsAfterPi}
	Let $ X $ be an Alexandrov space of curvature $ \leq K $. Then $ \conj(X)\geq D_K. $
\end{cor}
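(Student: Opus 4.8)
The plan is to show that no local geodesic of length $<D_K$ has a conjugate point, i.e.\ that for every local geodesic $\gamma\colon[0,1]\to X$ with $L(\gamma)<D_K$ the endpoint $\gamma(1)$ is not conjugate to $p:=\gamma(0)$ along $\gamma$. Granting this, the corollary is immediate: any local geodesic issuing in a point $p$ that admits a conjugate point must then have length $\ge D_K$ (in particular the constant geodesic $0_p$, of length $0<D_K$, admits none, so $p$ is not conjugate to itself), whence $\conj_X(p)\ge D_K$ for all $p$ and therefore $\conj(X)=\inf_{p\in X}\conj_X(p)\ge D_K$. So I fix such a $\gamma$ and must exhibit a neighborhood $U$ of $\gamma$ in $\tilde{X}_p$ on which $\exp_p$ is a homeomorphism onto its image.

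First I would apply the Patchwork Along a Curve (Proposition~\ref{prop_Patchwork}) to $\gamma$, producing a CAT($K$) space $\mathcal N$, an open set $\hat\Omega\subseteq\mathcal N$, a curve $\hat\gamma\colon[0,1]\to\hat\Omega$ and an open locally isometric map $\Phi\colon\hat\Omega\to X$ with $\Phi\circ\hat\gamma=\gamma$. Since $\Phi$ is a local isometry, $\hat\gamma$ is a local geodesic with $L(\hat\gamma)=L(\gamma)<D_K$, hence a geodesic joining $\hat p:=\hat\gamma(0)$ to $\hat q:=\hat\gamma(1)$ with $d_{\mathcal N}(\hat p,\hat q)<D_K$ by Lemma~\ref{lem_factsAS}~\ref{lem_locMinimizing}; the same lemma gives, in the CAT($K$) space $\mathcal N$, uniqueness of geodesics between points at distance $<D_K$ and their continuous dependence on the endpoints. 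Because $\hat\gamma([0,1])$ is a compact subset of the open set $\hat\Omega$, I can choose an open neighborhood $\hat W$ of $\hat q$ such that $\Phi|_{\hat W}$ is an isometry onto an open set $W:=\Phi(\hat W)\ni\gamma(1)$ and such that for every $\hat q'\in\hat W$ one has $d_{\mathcal N}(\hat p,\hat q')<D_K$ and the geodesic from $\hat p$ to $\hat q'$ stays inside $\hat\Omega$. Then
\[
\sigma\colon W\to\tilde{X}_p,\qquad q'\mapsto \Phi\circ\bigl[\hat p,\,(\Phi|_{\hat W})^{-1}(q')\bigr],
\]
where $[\,\cdot\,,\cdot\,]$ denotes the geodesic of $\mathcal N$ parametrized affinely on $[0,1]$, is a well-defined continuous map with $\exp_p\circ\sigma=\id_W$ and $\sigma(\gamma(1))=\gamma$. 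Consequently, on $U:=\sigma(W)$ the map $\exp_p|_U\colon U\to W$ is a continuous bijection whose inverse is the continuous map $\sigma$, hence a homeomorphism.

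It then remains to check that $U$ is a neighborhood of $\gamma$ in $\tilde{X}_p$; since $\tilde{X}_p$ is metric, it suffices that every sequence $\delta_n\to\gamma$ in $\tilde{X}_p$ eventually lies in $U$. By Lemma~\ref{lem_factsAS}~\ref{lem_lengthContLocGeod} one gets $L(\delta_n)\to L(\gamma)<D_K$, and clearly $\delta_n(1)\to\gamma(1)$. For $n$ large I would lift $\delta_n$ through $\Phi$ to a curve $\hat\delta_n\colon[0,1]\to\hat\Omega$ with $\hat\delta_n(0)=\hat p$, $\Phi\circ\hat\delta_n=\delta_n$, and $\hat\delta_n$ uniformly close to $\hat\gamma$. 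Then $\hat\delta_n$ is a local geodesic in $\mathcal N$ of length $<D_K$, hence a geodesic (Lemma~\ref{lem_factsAS}~\ref{lem_locMinimizing}), with $\hat\delta_n(1)\in\hat W$ for $n$ large; uniqueness of geodesics at distance $<D_K$ forces $\hat\delta_n=[\hat p,\hat\delta_n(1)]$, and since $\Phi|_{\hat W}$ is injective, $(\Phi|_{\hat W})^{-1}(\delta_n(1))=\hat\delta_n(1)$, so $\delta_n=\Phi\circ[\hat p,(\Phi|_{\hat W})^{-1}(\delta_n(1))]=\sigma(\delta_n(1))\in U$.

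The main obstacle is the path-lifting step in the previous paragraph: local geodesics $C^0$-close to $\gamma$ must be written as $\Phi$ of curves $C^0$-close to $\hat\gamma$. This is the standard lifting property of a local homeomorphism applied to perturbations of an already liftable curve -- cover the compact set $\hat\gamma([0,1])$ by finitely many balls on which $\Phi$ is an isometry onto an open set, pass to a common radius, subdivide $[0,1]$ so finely that each piece of $\delta_n$ lies in one such image, and patch the local inverse branches starting from $\hat p$, an induction over the subdivision keeping the lift close to $\hat\gamma$ -- but it does need to be written out with care. Everything else is a routine assembly of the CAT($K$) facts recorded in Lemma~\ref{lem_factsAS}, parts \ref{lem_locMinimizing} and \ref{lem_lengthContLocGeod}, together with the Patchwork proposition. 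One may alternatively extract the continuous section $\sigma$ directly from Proposition~\ref{prop_symmConjPointsAfterPi} rather than re-deriving it, but the proof that its image is a neighborhood of $\gamma$ still rests on the same lifting argument.
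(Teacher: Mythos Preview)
The paper does not spell out a proof of this corollary; it records it as a direct consequence of the Patchwork proposition and Proposition~\ref{prop_symmConjPointsAfterPi}, with a citation to \cite{AB}. Your argument is correct and is precisely the proof one would write out: construct a continuous section of $\exp_p$ via the CAT($K$) model from Patchwork, then verify its image is open in $\tilde X_p$ by lifting nearby local geodesics. The path-lifting step you flag as the technical heart is indeed where the work lies, and your sketch (finite cover of $\hat\gamma([0,1])$ by balls on which $\Phi$ is an isometry, Lebesgue-number subdivision, inductive patching of local inverses) is the standard and correct way to do it. As you note yourself, one could equally extract the section $\sigma$ from Proposition~\ref{prop_symmConjPointsAfterPi}, but the openness of $\sigma(W)$ still requires the same lifting argument; so your route through Patchwork directly is neither a detour nor a shortcut.
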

\begin{cor}\label{cor_firstVarLocGeod}\index{First variation!on local geodesics}
	Let $ X $ be a locally compact Alexandrov space of curvature $ \leq K $, $ \gamma :[0,1]\to X $ be a local geodesic of length less than $ D_K $ and $ c:[0,1]\to X $ be a geodesic with $ c(0)=\gamma(1) $. Then there exists $ \delta>0 $ and a unique continuous family of local geodesics $ (\gamma_s:[0,1]\to X)_{0\leq s < \delta} $ joining $ \gamma(0) $ to $ c(s) $ with $ \gamma_0=\gamma $. Furthermore, the first variation formula holds, i.e.\
	\begin{equation*}
		\lim_{s\searrow 0} \frac{L(\gamma_s)-L(\gamma)}{s} = -\cos \angle_{\gamma(1)}(-\gamma',c'),
	\end{equation*}
	where $ -\gamma',c'\in\Sigma_{\gamma(1)} $ denote the directions corresponding to the reversal $ -\gamma $ and $ c $, respectively.
\end{cor}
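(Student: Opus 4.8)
The plan is to treat the two assertions separately. The existence, uniqueness and continuity of the family $(\gamma_s)_s$ are immediate from Proposition~\ref{prop_symmConjPointsAfterPi}: applied to $\gamma$, it produces neighborhoods $U_0$ of $\gamma(0)$ and $U_1$ of $\gamma(1)$ and a unique continuous family of local geodesics $(\gamma_{x,y})_{(x,y)\in U_0\times U_1}$ with $\gamma_{\gamma(0),\gamma(1)}=\gamma$. Since $c$ is continuous with $c(0)=\gamma(1)$, I would choose $\delta>0$ with $c([0,\delta))\subseteq U_1$ and set $\gamma_s:=\gamma_{\gamma(0),c(s)}$. Any continuous family of local geodesics from $\gamma(0)$ to $c(s)$ with value $\gamma$ at $s=0$ agrees with this one near $s=0$ by the uniqueness clause of Proposition~\ref{prop_symmConjPointsAfterPi}, so after shrinking $\delta$ the family is unique.

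For the first variation formula I would first reduce to a $\mathrm{CAT}(K)$ space. Apply the Patchwork Proposition~\ref{prop_Patchwork} to the concatenation $\gamma\ast c$: this yields a globally $\mathrm{CAT}(K)$ Alexandrov space $\mathcal{N}$, an open subset $\hat\Omega\subseteq\mathcal{N}$, an open locally isometric map $\Phi:\hat\Omega\to X$ and a lift of $\gamma\ast c$; write $\hat\gamma$ for the induced lift of $\gamma$ and $\hat c$ for the lift of a short initial arc of $c$, so $\hat\gamma(1)=\hat c(0)$. Because $\Phi$ is a local isometry, $\hat\gamma$ and $\hat c$ are local geodesics of length $<D_K$, hence minimizing by Lemma~\ref{lem_factsAS}~\ref{lem_locMinimizing}; in particular $\hat\gamma$ is the unique geodesic of $\mathcal{N}$ between its endpoints, and for small $s$ there is a unique geodesic $\hat\gamma_s$ from $\hat\gamma(0)$ to $\hat c(s)$ varying continuously with $\hat\gamma_0=\hat\gamma$. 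For $s$ small the image of $\hat\gamma_s$ stays inside $\hat\Omega$, so $\Phi\circ\hat\gamma_s$ is a continuously varying family of local geodesics in $X$ from $\gamma(0)$ to $c(s)$ with value $\gamma$ at $s=0$; by the uniqueness above $\Phi\circ\hat\gamma_s=\gamma_s$, and hence $L(\gamma_s)=L(\hat\gamma_s)=d_{\mathcal{N}}(\hat\gamma(0),\hat c(s))$. Since a local isometry preserves angles, it then suffices to prove the first variation formula for $d_{\mathcal{N}}(\hat\gamma(0),\cdot)$ along $\hat c$ in $\mathcal{N}$ and to transport it back by $\Phi$.

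So set $p=\hat\gamma(0)$, $q=\hat\gamma(1)=\hat c(0)$, $L=d_{\mathcal{N}}(p,q)<D_K$, $\alpha=\angle_q(-\hat\gamma',\hat c')$, and let $\sigma$ be a unit-speed geodesic from $q$ to $p$. The angle comparison in $\mathcal{N}$ --- monotonicity of comparison angles (Lemma~\ref{lem_factsAS}~\ref{lem_angleMonotonicity}) together with the fact that the $M_K$-comparison angle dominates the Euclidean one --- gives $\alpha\leq\tilde{\angle}^K_q(p,\hat c(s))$; comparing $d_{\mathcal{N}}(p,\hat c(s))$ with the side of the $M_K$-triangle with adjacent sides $L,s$ and included angle $\alpha$, and using that this side has the expansion $L-s\cos\alpha+O(s^2)$ uniformly as $s\searrow0$ (with $L$ fixed), yields $\liminf_{s\searrow0}s^{-1}(d_{\mathcal{N}}(p,\hat c(s))-L)\geq-\cos\alpha$. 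For the matching upper bound I would fix $\eps>0$ and, using again Lemma~\ref{lem_factsAS}~\ref{lem_angleMonotonicity} and that $M_K$- and Euclidean comparison angles agree in the limit of vanishing side lengths, pick $t_0>0$ so small that $\tilde{\angle}^K_q(\sigma(t_0),\hat c(s))\leq\alpha+\eps$ for all sufficiently small $s$; then the triangle inequality $d_{\mathcal{N}}(p,\hat c(s))\leq(L-t_0)+d_{\mathcal{N}}(\sigma(t_0),\hat c(s))$ and the $M_K$ law of cosines for the triangle $q,\sigma(t_0),\hat c(s)$ give $s^{-1}(d_{\mathcal{N}}(p,\hat c(s))-L)\leq-\cos(\alpha+\eps)+O_{t_0}(s)$, hence $\limsup_{s\searrow0}s^{-1}(d_{\mathcal{N}}(p,\hat c(s))-L)\leq-\cos\alpha$ after letting $\eps\searrow0$. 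Transporting back by $\Phi$, which maps $-\hat\gamma',\hat c'$ to $-\gamma',c'$ and preserves the angle, gives the asserted identity.

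The hard part will be the upper bound in this $\mathrm{CAT}(K)$ first variation formula: the monotonicity of comparison angles controls $\tilde{\angle}^K_q(p,\hat c(s))$ only from below, so one cannot argue directly that it converges to $\alpha$. Replacing the far endpoint $p$ by a nearby point $\sigma(t_0)$ on the geodesic and estimating via the triangle inequality is what resolves this, at the price of keeping the curvature corrections in the $M_K$ law of cosines uniform over the relevant range of side lengths --- harmless here since $L<D_K$ is fixed and $t_0$ is chosen before $s\searrow0$.
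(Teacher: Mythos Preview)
Your proof is correct and follows the same approach as the paper: reduce via Patchwork (Proposition~\ref{prop_Patchwork}) to a $\mathrm{CAT}(K)$ space, use the first variation formula there, and transport back by the local isometry. The paper simply cites the $\mathrm{CAT}(K)$ first variation formula from \cite[9.36]{AKP_Found}, whereas you reprove it from angle monotonicity and the law of cosines; and the paper patches only along $\gamma$ (with the short initial arc of $c$ lifting because $\Phi$ is an open local isometry) while you patch along the concatenation $\gamma\ast c$, which amounts to the same thing.
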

\begin{proof}
	We apply patchwork along the curve $ \gamma $ and the standard first variation formula \cite[9.36]{AKP_Found} for CAT($ K $) spaces. By the local isometry the angles and lengths of curves remain unchanged.
\end{proof}
\com{The first variation formula still holds true for any (Lipschitz-)continuous family of local geodesics (see my notes). Maybe it is even true for any continuous family (see \cite[9.38]{AKP_Found}), not sure what the proof is though.}
To investigate continuity of the injectivity radius we define the interval $ [0,\infty] $ to be the one-point compactification of $ [0,\infty) $, that is, maps $ f :X\to [0,\infty] $ are continuous at a point $ p\in X $ with $ f(p)=\infty $ if and only if $ \lim\limits_{p'\to p}f(p')=\infty. $
\begin{prop}\label{prop_InjCont}
	Let $ X $ be a locally compact Alexandrov space of curvature $ \leq K $ and $ p\in X $ with $ \inj(p)<D_K $. Then the injectivity radius $ \inj:X\to[0,\infty] $ is continuous at $ p $.
\end{prop}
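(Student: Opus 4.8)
The plan is to show continuity of $\inj$ at $p$ by establishing the two one-sided estimates: upper semicontinuity (limsup of nearby values $\le \inj(p)$) and lower semicontinuity (liminf $\ge \inj(p)$), using the Klingenberg Lemma to reduce to the two cases of conjugate points and closed local geodesic loops. Since $\inj(p)<D_K\le\conj(X)$ by Corollary~\ref{cor_conjPointsAfterPi}, the point realizing $\inj(p)$ is not conjugate, so Lemma~\ref{lem_Klingenberg} tells us $\inj(p)=\frac12 L(\gamma)$ for a local geodesic loop $\gamma$ at $p$ of length $2\inj(p)<2D_K$. I would first record that on a small ball around $p$ we may assume all relevant values stay bounded away from $D_K$ (if $\inj$ were to jump up to or past $D_K$ near $p$, that is still consistent with continuity into $[0,\infty]$ only if it jumps all the way to $\infty$, so some care is needed; but the real content is the finite case).

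For \emph{upper semicontinuity}: take the loop $\gamma$ with $L(\gamma)=2\inj(p)$, split it at its midpoint into two distinct geodesics $\gamma_1,\gamma_2$ from $p$ to $q:=\gamma(\tfrac12)$. Apply Corollary~\ref{cor_firstVarLocGeod} (or Proposition~\ref{prop_symmConjPointsAfterPi}) to each half: for $p'$ near $p$ there are continuous families of local geodesics $\gamma_1^{p'},\gamma_2^{p'}$ from $p'$ to a common nearby endpoint $q'$, with lengths converging to $L(\gamma_i)$. For $p'$ close enough these two remain distinct (they start in nearly opposite directions, so they cannot coincide on an initial segment — here I would invoke non-branching, but wait, we have no lower curvature bound stated in this proposition; instead use that $\exp_{p'}$ is a local homeomorphism near each, by absence of conjugate points below $D_K$, so the two cannot merge). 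Their concatenation is a local geodesic loop at $p'$ of length $\to 2\inj(p)$, and by Corollary~\ref{cor_injBoundLocGeod} this gives $\inj(p')\le \tfrac12(L(\gamma_1^{p'})+L(\gamma_2^{p'}))\to \inj(p)$, provided we stay below $\conj(X)=D_K$, which holds for $p'$ near $p$.

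For \emph{lower semicontinuity}: suppose $p_i\to p$ with $\inj(p_i)\to L < \inj(p)$ (we may pass to a subsequence; also $L<\inj(p)<D_K$, so $\inj(p_i)<\conj(X)$ eventually and Klingenberg applies at each $p_i$). Then each $\inj(p_i)$ is realized either by a conjugate point — impossible since $\inj(p_i)<D_K\le\conj(X)$ — or by a local geodesic loop $\gamma_i$ at $p_i$ of length $2\inj(p_i)\to 2L$. These loops lie in a bounded ball, so by Lemma~\ref{lem_factsAS}~\ref{lem_geodLimit} and \ref{lem_lengthContLocGeod} a subsequence converges to a local geodesic $\gamma_\infty$ at $p$ of length $2L<2\inj(p)$. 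If $\gamma_\infty$ is a genuine loop at $p$ (both endpoints $p$) of positive length, then $\inj(p)\le L$ by Corollary~\ref{cor_injBoundLocGeod}, a contradiction. The main obstacle — and the step I expect to be delicate — is ruling out degeneration: the limit loop could a priori be constant, or the two "halves" could converge together so that $\gamma_\infty$ retraces a single geodesic rather than being an honest loop. To handle this I would keep track of the midpoints $q_i=\gamma_i(\tfrac12)$ and the initial directions; since $\inj(p_i)=\tfrac12L(\gamma_i)$ is bounded below away from $0$ (because $\inj$ is lower-bounded near $p$ by the CAT($K$) radius, Lemma~\ref{lem_injNonzero} applied uniformly on a compact neighborhood), the loops have length bounded below, so $\gamma_\infty$ is nonconstant; and if the limit were a back-and-forth geodesic traversed twice, that still exhibits two distinct geodesics from $p$ to $q_\infty$ unless they coincide — but a local geodesic of length $<D_K$ cannot fail to be minimizing, whereas going $p\to q_\infty\to p\to$ further along $\gamma_\infty$ would be a non-minimizing local geodesic of length $<D_K$, contradicting Lemma~\ref{lem_factsAS}~\ref{lem_locMinimizing} via Proposition~\ref{prop_Patchwork}. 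Assembling the two semicontinuity statements yields continuity of $\inj$ at $p$.
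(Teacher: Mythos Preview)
Your approach is essentially correct but takes a somewhat different route than the paper. The paper does \emph{not} invoke the Klingenberg Lemma~\ref{lem_Klingenberg} at all in this proof; you invoke it in both directions.

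For upper semicontinuity, the paper simply picks a cut point $q$ with $d(p,q)<\inj(p)+\varepsilon$ and two distinct geodesics $\gamma,\sigma$ from $q$ to $p$; since $p$ is not conjugate to $q$ (Corollary~\ref{cor_conjPointsAfterPi}), the endpoint map $\exp_q$ is a local homeomorphism near each, and one reads off local geodesics $\gamma_i,\sigma_i$ from $q$ to $p_i$ and applies Corollary~\ref{cor_injBoundLocGeod}. Your detour through Klingenberg to manufacture a loop at $p$ works too, but requires the extra observation that $\inj(p)$ is actually realized by a point of $\overline{\Cut(p)}$ (fine, by properness), and your justification that $\gamma_1^{p'}\neq\gamma_2^{p'}$ is more cleanly argued by continuity of the families from Proposition~\ref{prop_symmConjPointsAfterPi} (they stay near $\gamma_1\neq\gamma_2$) than by any appeal to directions or non-branching.

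For lower semicontinuity, again the paper avoids Klingenberg: it takes cut points $q_n$ of $p_{i_n}$ at distance $<\inj(p)-\varepsilon$ with two distinct geodesics $\gamma_n,\sigma_n$, passes to subsequential limits $\gamma,\sigma$, and if $\gamma=\sigma$ derives a contradiction directly from the \emph{uniqueness} clause in Proposition~\ref{prop_symmConjPointsAfterPi}. This sidesteps your ``delicate'' degeneration analysis entirely. Your version can be made rigorous, but note that Lemma~\ref{lem_factsAS}~\ref{lem_geodLimit} is stated for geodesics, not local geodesic loops, so you must split into halves first; and once you do, the cleanest way to rule out the halves coalescing is exactly the paper's uniqueness argument, or alternatively: the full loops $\gamma_i$ are local geodesics, so by Lemma~\ref{lem_factsAS}~\ref{lem_lengthContLocGeod} their uniform limit is a local geodesic, which cannot be an ``out-and-back'' retracing (it would fail local minimality at the turnaround). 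Your patchwork justification for this last point is more circuitous than necessary.

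In short: both arguments work; the paper's is leaner because it treats pairs of geodesics directly and lets Proposition~\ref{prop_symmConjPointsAfterPi} do all the work, while yours routes everything through geodesic loops via Klingenberg.
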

\begin{proof}
	Let $ p\in X $ with $ \inj(p) < D_K $ and $ (p_i)_{i\in\IN} $ be a sequence converging to $ p $. We first show $ \limsup_{i\to\infty} \inj(p_i) \leq \inj(p) $.\\
	Let $ \varepsilon>0 $ such that $ \inj(p)+\varepsilon < D_K $. Then there exists a cut point $ q\in X $ of $ p $ with $ d(p,q)<\inj(p)+\varepsilon $. Let $ \gamma,\sigma\in\tilde{X}_q $ be two distinct geodesics joining $ q $ to $ p $. By $ d(p,q)<D_K $ and Corollary~\ref{cor_conjPointsAfterPi}, the point $ p $ cannot be conjugate to $ q $ along $ \gamma $ or $ \sigma $ and hence there are neighborhoods $ U_\gamma \subseteq \tilde{X}_q $ at $ \gamma $ and $ U_\sigma \subseteq \tilde{X}_q $ at $ \sigma $ which are homeomorphic to a neighborhood of $ q $ in $ X $ by the endpoint map $ \exp_p $. Since $ \gamma $ and $ \sigma $ are distinct, we can choose $ U_\gamma $ and $ U_\sigma $ to be disjoint. We consider, for sufficiently large $ i\in\IN $, those local geodesics $ \gamma_i\in U_\gamma,\sigma_i\in U_\sigma $ that join $ q $ to $ p_i $. They converge to $ \gamma $ and $ \sigma $, respectively. And by Lemma~\ref{lem_factsAS}~\ref{lem_lengthContLocGeod}, the lengths $ L(\gamma_i) $ and $ L(\sigma_i) $ converge to $ L(\gamma) $ and $ L(\sigma) $, respectively. With Corollary~\ref{cor_injBoundLocGeod}, we conclude
	$$ \limsup_{i\to\infty}\inj(p_i)\leq \limsup_{i\to\infty}\frac{L(\gamma_i)+L(\sigma_i)}{2} = d(p,q) < \inj(p) +\varepsilon. $$ 
	Since this holds for all sufficiently small $ \varepsilon>0 $, we obtain $$ \limsup\limits_{i\to\infty} \inj(p_i) \leq \inj(p). $$
	We show $ \liminf_{i\to\infty} \inj(p_i) \geq \inj(p) $. Assume that, on the contrary, there exists a subsequence $ (p_{i_n})_{n\in\IN} $ with $ \inj(p_{i_n})<\inj(p)-\varepsilon $. Then there exist points $ q_n $ such that $ d(p_{i_n},q_n)<\inj(p)-\varepsilon $ and $ p_{i_n} $ and $ q_n $ are joined by two distinct geodesics $ \gamma_n $ and $ \sigma_n $. Note that for sufficiently large $ k\in\IN $, all considered geodesics are contained in a bounded set. Then Lemma~\ref{lem_factsAS}~\ref{lem_geodLimit} allows us to pass to subsequences with limit geodesics $ \gamma $ and $ \sigma $, respectively, with common endpoints $ p $ and some $ q\in X $. We cannot have $ p=q $ since $ p $ has a uniquely geodesic neighborhood by Lemma~\ref{lem_factsAS}~\ref{lem_locMinimizing}. Furthermore, we have
	$$ L(\gamma)= d(p,q) = \lim_{n\to\infty} d(p_{i_n},q_n) \leq \inj(p) - \eps <D_K. $$
	If $ \gamma\neq \sigma $, then $ q $ is a cut point of $ p $ at distance less than $ \inj(p) $, a contradiction. If $ \gamma=\sigma $, we have found two distinct families of geodesics $ (\gamma_{m_n})_{n\in\IN} $ and $ (\sigma_{m_n})_{n\in\IN} $ that are continuous at $ \gamma=\sigma $ which has length $ <D_K $. This contradicts Proposition~\ref{prop_symmConjPointsAfterPi}. Therefore, the assumption was false and we obtain 
	$$ \liminf_{i\to\infty}\inj(p_i)\geq \inj(p). $$ 
	We showed semi upper and lower continuity in $ p $ and can conclude continuity in $ p $.
\end{proof}
We define $ [0,\infty] $ to be the one-point compactification of $ [0,\infty) $, that is, functions $ f:\IR\to [0,\infty] $ are continuous at a point $ p\in \IR $ with $ f(p)=\infty $ if and only if for each sequence $ (p_i)_{i\in\IN} $ with limit $ p $ the sequence $ (f(p_i))_{i\in\IN} $ diverges to $ +\infty $.
\begin{cor}\label{cor_injCont}
	Let $ X $ be a locally compact Alexandrov space of curvature $ \leq K $. Then the map $ f:X\mapsto [0,D_K],\ p\mapsto \min\{D_K,\inj(p)\} $ is continuous.
	%Let $ p\in X $ and $ (p_i)_{i\in\IN}\subseteq X $ be a sequence with limit $ p $. If $ \inj(p_i)<D_K $ for all $ i\in\IN $, then $ \liminf\limits_{i\to\IN}\inj(p_i) \geq \inj(p) $. 
\end{cor}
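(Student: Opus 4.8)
The plan is to treat a point $p\in X$ according to whether $\inj(p)<D_K$ or $\inj(p)\geq D_K$, using Proposition~\ref{prop_InjCont} as the principal tool; since $X$ is a metric space it suffices to check sequential continuity, so fix a sequence $p_i\to p$.

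If $\inj(p)<D_K$, then $f(p)=\inj(p)$ and Proposition~\ref{prop_InjCont} gives $\inj(p_i)\to\inj(p)$ in $[0,\infty]$. As the limit is finite and strictly below $D_K$, we get $\inj(p_i)<D_K$ for all large $i$, whence $f(p_i)=\inj(p_i)\to\inj(p)=f(p)$; this case is immediate.

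The case $\inj(p)\geq D_K$, where $f(p)=D_K$, is the crux. Since $f\leq D_K$ identically, continuity at $p$ reduces to the inequality $\liminf_i\inj(p_i)\geq D_K$, because $t\mapsto\min\{D_K,t\}$ is continuous and nondecreasing. I would establish this inequality by re-running the lower-semicontinuity argument from the proof of Proposition~\ref{prop_InjCont} with $\inj(p)$ replaced by $D_K$: assuming, after passing to a subsequence, that $\inj(p_i)<D_K-\varepsilon$ for some fixed $\varepsilon>0$, one picks cut points $q_i$ of $p_i$ at distance $<D_K-\varepsilon$ with two distinct geodesics $\gamma_i,\sigma_i$ from $p_i$ to $q_i$, passes to limit geodesics $\gamma,\sigma$ from $p$ to a common point $q$ (Lemma~\ref{lem_factsAS}~\ref{lem_geodLimit}) with $L(\gamma)=d(p,q)=\lim_i d(p_i,q_i)\leq D_K-\varepsilon<D_K$, notes $p\neq q$ by the uniquely geodesic neighbourhood of $p$ (Lemma~\ref{lem_factsAS}~\ref{lem_locMinimizing}), and then derives a contradiction: if $\gamma\neq\sigma$ then $q$ is a cut point of $p$ at distance $<D_K\leq\inj(p)$, impossible; if $\gamma=\sigma$ then $(\gamma_i)$ and $(\sigma_i)$ are two distinct continuous families of local geodesics at a local geodesic of length $<D_K$, contradicting Proposition~\ref{prop_symmConjPointsAfterPi}.

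The one delicate point is precisely this last case: Proposition~\ref{prop_InjCont} cannot simply be quoted there, since its hypothesis $\inj(p)<D_K$ fails. What rescues the recycled argument is the observation that the sole role of that hypothesis in the original proof is to force the limiting local geodesic to have length $<D_K$ (so that Corollary~\ref{cor_conjPointsAfterPi} and Proposition~\ref{prop_symmConjPointsAfterPi} are applicable), and after truncating the injectivity radius at $D_K$ this shortness holds for free. Everything else — the first case, and the elementary passage from $\liminf_i\inj(p_i)\geq D_K$ to $f(p_i)\to f(p)$ — is routine.
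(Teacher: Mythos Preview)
Your proposal is correct and follows essentially the same route as the paper: split into the cases $\inj(p)<D_K$ (handled directly by Proposition~\ref{prop_InjCont}) and $\inj(p)\geq D_K$ (recycle the lower-semicontinuity half of that proof with $D_K$ in place of $\inj(p)$). The only cosmetic difference is that the paper explicitly separates the subcase $D_K=\infty$, where the bound ``$\inj(p_i)<D_K-\varepsilon$'' must be replaced by ``$\inj(p_i)<C$'' for some finite $C$; your reduction to $\liminf_i\inj(p_i)\geq D_K$ already covers this once that wording is adjusted.
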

\begin{proof}
	Let $ p\in X $. If $ f(p)<D_K $, we have continuity by Proposition~\ref{prop_InjCont}. We assume from now on $ f(p)=D_K $. Assume $ D_K<\infty $ and let $(p_i)_{i\in\IN}\subseteq X $ be a sequence converging to $ p $. If $ f $ is not continuous at $ p $, there exists $ \varepsilon>0 $ and a subsequence $ (p_{i_n})_{n\in\IN} $ such that $ f(p_{i_n})=\inj(p_{i_n}) < D_K-\varepsilon $ for all $ n\in \IN $. We repeat the second part of the proof of Proposition~\ref{prop_InjCont} to obtain $ \inj(p) \leq \liminf_{n\to\infty}\inj(p_{i_n}) < D_K $. A contradiction to $ f(p)=D_K $. In case $ D_K=\infty $ the statement follows from an analogous argument where one replaces the upper bounds $ D_K-\varepsilon $ by a finite number $ C\in\IR $.
\end{proof}

We conclude this section with the observation that Theorem~\ref{thm_injSoul} applies to the minimal radius in place of the injectivity radius as well.
\begin{cor}\label{cor_ThmMinRad}
	Let $ K> 0 $ and $ X $ be a finite-dimensional Alexandrov space of non\-negative curvature bounded above by $ K $ and $ S\subseteq X$ be a Cheeger-Gromoll soul. If $ \MinRad(X)<\frac{\pi}{\sqrt{K}} $, then 
	\begin{equation*}
		\MinRad(X) = \MinRad(S).
	\end{equation*}
\end{cor}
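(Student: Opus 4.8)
The plan is to deduce the statement from Theorem~\ref{thm_injSoul} by first showing that, on the spaces at hand, the injectivity radius and the minimal radius coincide as long as one of them lies below $D_K=\pi/\sqrt{K}$. The crucial preliminary fact I would establish is the following: \emph{if $Y$ is a complete, locally compact Alexandrov space of curvature $\leq K$ with $\min\{\inj(Y),\MinRad(Y)\}<D_K$, then $\inj(Y)=\MinRad(Y)$.} This rests on the two already-proven ingredients that for such a $Y$ one has $\conj(Y)\geq D_K$ (Corollary~\ref{cor_conjPointsAfterPi}) and $\inj(q)>0$ for every $q\in Y$ (Lemma~\ref{lem_injNonzero}).

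To prove this fact I would argue as follows. Assume first $\inj(Y)<D_K$. Picking a sequence $q_i\in Y$ with $\inj(q_i)\to\inj(Y)$, one has $\inj(q_i)\in(0,\conj(Y))$ for large $i$, so Corollary~\ref{cor_injMinRad} gives $\inj(q_i)=\MinRad(q_i)\geq\MinRad(Y)$; letting $i\to\infty$ yields $\inj(Y)\geq\MinRad(Y)$, and hence also $\MinRad(Y)<D_K$. Then, picking $p_i\in Y$ with $\MinRad(p_i)\to\MinRad(Y)$, one has $\MinRad(p_i)<\conj(Y)$ for large $i$, so Corollary~\ref{cor_ineqInjMinRad} gives $\inj(Y)\leq\inj(p_i)\leq\MinRad(p_i)$; letting $i\to\infty$ yields $\inj(Y)\leq\MinRad(Y)$. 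If instead only $\MinRad(Y)<D_K$ is known at the outset, I would run these two steps in the opposite order. Either way $\inj(Y)=\MinRad(Y)$.

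With this in hand the corollary follows in a few lines. Applying the fact to $Y=X$ --- a finite-dimensional, hence locally compact, complete Alexandrov space of curvature $\leq K$ --- the hypothesis $\MinRad(X)<\pi/\sqrt{K}$ gives $\inj(X)=\MinRad(X)<\pi/\sqrt{K}$, so Theorem~\ref{thm_injSoul} applies and yields $\inj(X)=\inj(S)$, the right-hand side being the intrinsic injectivity radius of the soul. Next I would observe that $S$, being a soul, is compact and totally convex, hence a closed convex subset of $X$, and therefore, with the restricted length metric, again a complete, locally compact Alexandrov space of curvature $\leq K$ (Lemma~\ref{lem_factsAS}). Since $\inj(S)=\inj(X)<D_K$, applying the fact once more, now to $Y=S$, gives $\inj(S)=\MinRad(S)$. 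Chaining the equalities, $\MinRad(X)=\inj(X)=\inj(S)=\MinRad(S)$, which is the claim.

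The only point requiring care is the bookkeeping with the conjugate radius: Corollaries~\ref{cor_ineqInjMinRad} and~\ref{cor_injMinRad} each need $\inj$ or $\MinRad$ (at the relevant point, respectively at a point nearly realizing the infimum) to lie below $\conj(\cdot)$, so one must first bootstrap the missing bound from the given one before invoking the other corollary --- and likewise for $S$. Beyond that, no new geometric input is needed: the whole argument is a combination of Theorem~\ref{thm_injSoul} with the pointwise comparison results of Section~\ref{sec_Inj}.
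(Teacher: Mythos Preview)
Your proof is correct and follows essentially the same route as the paper: reduce to Theorem~\ref{thm_injSoul} by showing $\inj=\MinRad$ on both $X$ and $S$ via Corollaries~\ref{cor_ineqInjMinRad}, \ref{cor_injMinRad} and \ref{cor_conjPointsAfterPi}, using that $S$ inherits the upper curvature bound as a closed convex subset. The only difference is that you are more explicit than the paper about the passage from the pointwise equalities $\inj(p)=\MinRad(p)$ to the global ones $\inj(Y)=\MinRad(Y)$, which the paper leaves implicit.
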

\begin{proof}
	Using Corollary~\ref{cor_injMinRad}, the assumption $ \MinRad(X)<\pi/\sqrt{K} $ yields $\inj(X)<\pi/\sqrt{K} $ and we can apply Theorem~\ref{thm_injSoul} to obtain $$ \inj(S) = \inj(X) < \frac{\pi}{\sqrt{K}}. $$ Applying Lemma~\ref{lem_injNonzero} and Corollary~\ref{cor_conjPointsAfterPi} to $ X $ and $ S $ each, we obtain $ \inj(X)=\MinRad(X) $ and $ \inj(S)=\MinRad(S) $. We conclude 
	\begin{equation*}
		\MinRad(X) = \inj(X) = \inj(S) = \MinRad(S). \qedhere
	\end{equation*} 
\end{proof}

%%%%%%%%%%%%%%%%%%%%%%%%%%%%%%%%%%%%%%%%%%%%%%%%%%%%%%%%%%%%%%%%%

%%%%%%%%%%%%%%%%%%%%%%%%%%%%%%%%%%%%%%%%%%%%%%%%%%%%%%%%%%%%%%%%%%%
\section{The Injectivity Radius of a Soul}\label{sec_InjSoul}
%%%%%%%%%%%%%%%%%%%%%%%%%%%%%%%%%%%%%%%%%%%%%%%%%%%%%%%%%%%%%%%%%%%
We turn to proving Theorem~\ref{thm_injSoul}. Corollary~\ref{cor_injSoulPos} then follows with Proposition~\ref{prop_soulPoint}.
We follow the proof of \cite{CK} and will see that the Riemannian techniques more or less can be replaced by their counterparts in Alexandrov spaces.
To begin the proof, let $ K> 0 $ \com{We can do the proof with $ K\geq 0 $ as well but we gain nothing and the proof for continuity of the function $ g $ complicates when taking care of the case $ D_K=\infty $.}and $ X $ be a finite-dimensional Alexandrov space of non\-negative curvature bounded above by $ K $. Following the soul construction of Proposition~\ref{prop_soul}, we obtain the iterated spaces $ (X_l)_{l=0,\dots,n} $ where $ X_0:=X $ and $ X_n $ is the soul. Fix $ l\in\{0,\dots,n-1\} $ and consider the sets $ (C_t)_{t\in I} $ between $ X_{l+1}=C_{a} $ and $ X_{l} $ obtained in Lemma~\ref{lem_soulSuperlevelSets}, where  $ I=[a,\infty) $  or $ I=[a,0] $. We define the function
\begin{equation}\label{eq_g}
	g:I\to[0,\infty],\quad t\mapsto \inf\{\inj_{X_l}(p):p\in C_t\}.
\end{equation} 
In the following we omit the index $ X_l $. The next Lemma~covers the main part of the proof.
\begin{lem}\label{lem_gLeftDerivativeZero}
	Let $ g:I\to [0,\infty] $ as defined in (\ref{eq_g}) and $ t\in I\setminus\{a\} $. If $ g(t)<D_K $, then the left derivative satisfies $ g^-(t)=0. $
\end{lem}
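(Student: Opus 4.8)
I want to show that the infimum-of-injectivity-radius function $g$ on the superlevel sets has vanishing left derivative at each interior point $t$ where $g(t) < D_K$. The strategy is to exploit the continuity of the injectivity radius (Corollary~\ref{cor_injCont}) together with the non-expanding \v{S}arafutdinov-type retraction: since the sets $C_s$ for $s$ slightly below $t$ are pushed into $C_t$ by a $1$-Lipschitz map, and since $g(t)<D_K$ forces $g$ to be realized by a genuine cut point which (by Klingenberg, Lemma~\ref{lem_Klingenberg}, using $\conj(X_l)\geq D_K$ from Corollary~\ref{cor_conjPointsAfterPi}) is the midpoint of a local geodesic loop, I can control how the loop length changes as the base point moves along the retraction flow. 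The key point is that the first variation formula (Corollary~\ref{cor_firstVarLocGeod}) bounds the rate of change of the loop length by $1$, but moving inside a superlevel set and returning via the flow forces this rate to be nonpositive — hence zero.

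\textbf{Key steps.} First I would fix $t \in I\setminus\{a\}$ with $g(t)<D_K$ and, using compactness of $C_t$ together with continuity of $p\mapsto\min\{D_K,\inj(p)\}$ on $C_t$ (Corollary~\ref{cor_injCont}), pick a point $p\in C_t$ with $\inj(p)=g(t)<D_K$. By Klingenberg's Lemma~\ref{lem_Klingenberg} and $\conj(X_l)\geq D_K$, either $p$ has a cut point joined by two distinct geodesics (which by total convexity stay in $C_t$), giving a local geodesic loop $\eta$ at $p$ of length $2g(t)$, or $\inj(p)$ is directly realized by such a loop; in either case there is a local geodesic $\eta:[0,1]\to C_t$ with $\eta(0)=\eta(1)=p$ and $L(\eta)=2g(t)$. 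Second, using the retraction flow $\Phi^s$ associated to the relevant concave function (Proposition~\ref{prop_sarafutdoinovsRetraction}) whose image lands in the lower superlevel sets, I track $p_s:=\Phi^s(p)\in C_{t-\delta(s)}$ for small $s>0$, where $\delta(s)\to 0$. Since $\Phi^s$ is $1$-Lipschitz, applying it to $\eta$ yields a local geodesic-ish curve from $p_s$ to $p_s$ of length at most $2g(t)$, but to get a genuine \emph{local geodesic} loop I instead use the first variation formula (Corollary~\ref{cor_firstVarLocGeod}) to deform the two geodesic halves of $\eta$ so their endpoints move to $p_s$; this produces a local geodesic loop at $p_s$ whose length is $2g(t) + o(s)$ plus a first-order term controlled by $\cos$ of the angle between the incoming flow direction and $\pm\eta'$. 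Third, I would observe that $\inj(p_s)\leq \tfrac12 \cdot(\text{loop length at }p_s)$ by the definition of $\MinRad$ and Corollary~\ref{cor_ineqInjMinRad}/\ref{cor_injBoundLocGeod} (using $\inj(p_s)<D_K$ for $s$ small, by continuity), hence $g(t-\delta(s))\leq \inj(p_s) \leq g(t) + (\text{first-order term})\cdot s + o(s)$; combined with the reverse inequality $g(t-\delta(s))\leq g(t)$ that already holds by monotonicity $C_{t-\delta}\subseteq C_t$, this forces the left difference quotient of $g$ at $t$ to be $\leq 0$, i.e. $g^-(t)\leq 0$. Fourth, I need the matching bound $g^-(t)\geq 0$: here I reverse the roles, using that the flow can also be run "backward" in the sense that points of $C_t$ lie on geodesics emanating from $C_{t-\delta}$, so a loop at $p_s\in C_{t-\delta}$ realizing $g(t-\delta)$ can be pushed \emph{into} $C_t$ by the non-expanding flow, giving a loop at a point of $C_t$ of length $\leq 2g(t-\delta)$, whence $g(t)\leq g(t-\delta)$ — already known — and more delicately, a first-variation argument in the other direction. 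Actually the cleaner route for this half is: $g^-(t)\geq 0$ is automatic from monotonicity ($s\mapsto C_s$ increasing implies $g$ is nonincreasing? no — larger sets have smaller infimum, so $g$ is \emph{nonincreasing}, giving $g^-(t)\geq 0$ for free). So only the upper bound $g^-(t)\leq 0$ requires the variational argument, and combining the two yields $g^-(t)=0$.

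\textbf{Main obstacle.} The hard part is step two: producing, for the perturbed base point $p_s$, an honest \emph{local} geodesic loop (not merely a short curve) whose length I can estimate to first order. The naive image $\Phi^s\circ\eta$ is a curve but need not be locally geodesic, so I must instead take the two geodesic arcs comprising $\eta$ (each of length $g(t)<D_K$) and apply the first variation formula (Corollary~\ref{cor_firstVarLocGeod}) separately to slide their common endpoint $q$ — or rather to slide the base point $p$ — while keeping them geodesic; I then reconcatenate. The subtlety is that the incoming direction of the flow at $p$ is some $\xi\in\Sigma_p$, and the first-order change of the total loop length is $-\cos\angle(\xi,\gamma'_1) - \cos\angle(\xi,\gamma'_2)$ where $\gamma'_1,\gamma'_2\in\Sigma_p$ are the two outgoing directions of the loop; I must argue this sum is $\leq 0$, which should follow because if it were positive the loop at the nearby point would be shorter, contradicting that $g(t-\delta(s))$ is an infimum over \emph{all} points of $C_{t-\delta(s)}$, or alternatively because the concavity of the defining function $f$ pins the flow direction relative to geodesics in $C_t$. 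Handling the case where the flow might momentarily fix $p$ (if $p$ already realizes the maximum of $f$, i.e. $p\in C_a$) needs a separate, easy remark. Care is also needed that all lengths involved stay $<D_K$ so that uniqueness and continuous dependence of local geodesics (Proposition~\ref{prop_symmConjPointsAfterPi}, Corollary~\ref{cor_firstVarLocGeod}) apply throughout.
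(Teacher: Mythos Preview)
Your overall plan matches the paper's: realize $g(t)$ at some $q\in C_t$, produce via Klingenberg (together with $\conj\geq D_K$) a geodesic loop of length $2g(t)$ based at $q$, then move the base point into the smaller sets $C_{t-h}$ and use the first variation formula to control the loop length. But the execution has a sign reversal and, more seriously, a genuine gap at the step you yourself flag as the main obstacle.

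The sign reversal: since $C_{t-h}\subseteq C_t$, the infimum over the smaller set is at least as large, so $g$ is nonincreasing and $g^-(t)\leq 0$ is the automatic direction (you wrote $\geq 0$). The nontrivial direction is $g^-(t)\geq 0$; your step-three estimate $g(t-\delta(s))\leq g(t)+Cs+o(s)$ would indeed deliver this, \emph{provided} $C\leq 0$ and $\delta(s)\gtrsim s$. You establish neither.

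The gap: you never justify $C\leq 0$. The suggestion ``if it were positive the loop at the nearby point would be shorter'' has the sign backwards (positive first variation means the deformed loop is \emph{longer}), and ``concavity of $f$ pins the flow direction'' is too vague to be an argument. The paper's decisive move, which you miss entirely, is to slide the base point along a \emph{geodesic} $c$ from $q$ toward $C_a$ rather than along the gradient flow. Then the Adjacent Angle Lemma, applied to the two loop directions and the direction of $c$ at $q$, gives $\alpha+\beta=\pi$, and hence the first variation is \emph{exactly} $\tfrac12(-\cos\alpha-\cos\beta)=0$, not merely $\leq 0$. Using a geodesic also lets the first variation formula (Corollary~\ref{cor_firstVarLocGeod}) be invoked directly---it is stated for geodesic variations, not gradient curves---and the concavity of $\dist_{\partial C_t}$ along $c$ yields $d(c(s),\partial C_t)\geq Hs$ for some $H>0$, which is precisely the missing comparison between your unspecified $\delta(s)$ and $s$. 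Your detour through the \v{S}arafutdinov flow creates both difficulties without resolving either.
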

\begin{proof}
	Since $ p\mapsto \min\{D_K,\inj(p)\} $ is continuous (Corollary~\ref{cor_injCont}) and $ C_t $ is compact, there exists a point $ q\in C_t $ with $ \inj(q)=g(t) $. If $ g $ is constant in a neighborhood around $ t $, then already $ g^-(t)=0 $. Since $ g $ is non\-increasing, we can assume that $ g(s)>g(t) $ for all $ s\in [a,t) $. Therefore, the point $ q $ cannot lie in $ C_s $ for $ s\in[a,t) $ and hence $ q\in \partial C_t $. 
	Since X is proper, there exists a point $ \overline{q}\in\overline{\Cut(q)} $ with $ \inj(q)=d(q,\overline{q}) $ which is distinct to $ q $ by Lemma~\ref{lem_injNonzero}. Then Lemma~\ref{lem_Klingenberg} yields a local geodesic $ \gamma $ of length $ 2\inj(q) $ issuing and terminating in $ q $ with midpoint $ \overline{q} $. Now, choose a geodesic $ c $ from $ q $ to some point in $ C_{a} $ and let $ \gamma^1 $ and $ \gamma^2 $ denote the two distinct geodesics from $ q $ to $ \overline{q} $ that are part of $ \gamma $. 
	\begin{figure}[h]
		\centering
		\includegraphics[width=0.95\linewidth]{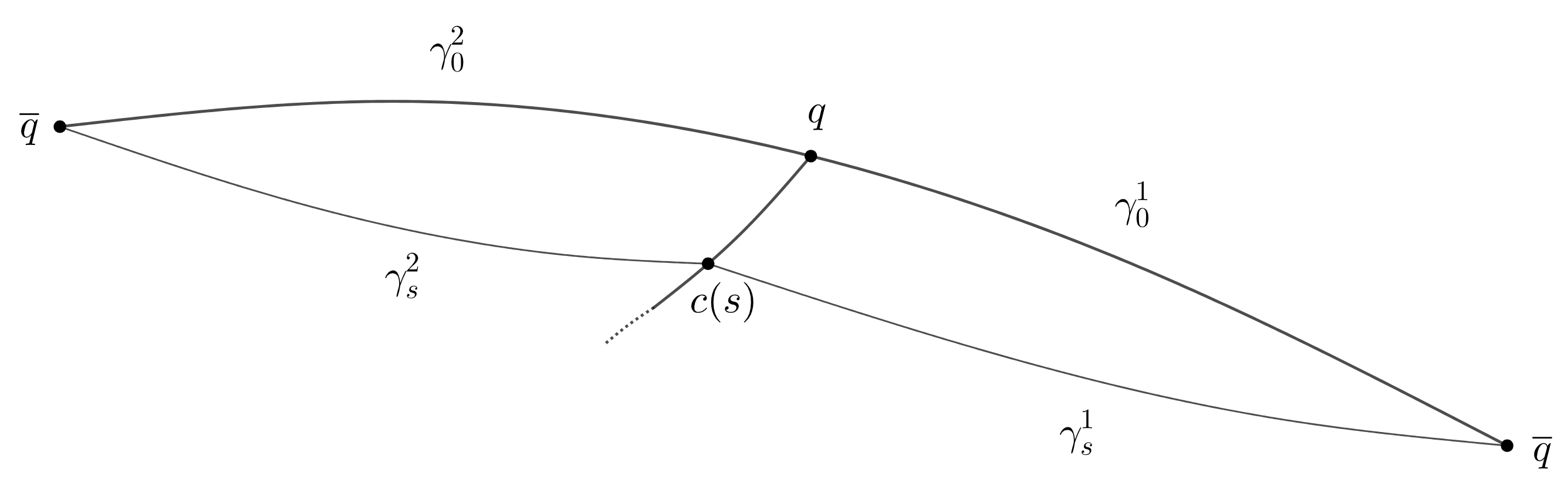}
		\caption{A sketch of $ \gamma $ and the local geodesics $ \gamma_s $ from $ \overline{q} $ to $ c(s) $ if one identifies the two copies of $ \overline{q} $}\label{fig_CKFirstVariation}
	\end{figure}
	They both have length $ L(\gamma^i) = \inj(q) < D_K $. Hence, we can apply Corollary~\ref{cor_firstVarLocGeod} to obtain $ \delta>0 $ and continuous families of local geodesics $ (\gamma^1_s:[0,1]\to X)_{s\in[0,\delta)} $ with $ \gamma^1_0=\gamma^1 $ and $ (\gamma^2_s:[0,1]\to X)_{s\in[0,\delta)} $ with $ \gamma^2_0=\gamma^2 $ (see Figure~\ref{fig_CKFirstVariation}). 
	By continuity of the length of local geodesics (Lemma~\ref{lem_factsAS}~\ref{lem_lengthContLocGeod}), we can choose $ \delta>0 $ sufficiently small such that $ L(\gamma^1_s) $ and $ L(\gamma^2_s) $ are less than $ D_K $ for all $ s\in[0,\delta) $.
	We define the function 
	\begin{equation*}
		f:[0,\delta)\to [0,D_K),\quad s\mapsto \frac{L(\gamma^1_s)+L(\gamma^2_s)}{2}
	\end{equation*}
	which satisfies $ f(0)=\inj(q)=g(t) $ and $ f(s)\geq \inj(c(s)) $ (see Corollary~\ref{cor_injBoundLocGeod}). Let $ \alpha $ be the angle between $ \gamma^1 $ and $ c $ and $ \beta $ be the angle between $ \gamma^2 $ and $ c $. Since $ \gamma^1 $ and $ \gamma^2 $ make up the geodesic $ \gamma $ in $ q $, we obtain from the Adjacent Angle Lemma~\ref{lem_factsAS}~\ref{lem_adjAngle} $ \alpha+\beta = \pi $. Another consequence from Corollary~\ref{cor_firstVarLocGeod} is that the derivative of $ f $ in $ 0 $ is given by the first variation formula. 
	\begin{align}\label{eq_fVariation}
		\lim_{s\searrow0}\frac{f(s)-f(0)}{s} = \frac{1}{2}\left(-\cos(\alpha) - \cos(\beta)\right) = 0.
	\end{align}
	To transfer the first order behavior of $ f $ to $ g $, we need to know the relation between the points $ c(s) $ and the sets $ C_t $. To establish that relation, define $ r(s):=d(c(s),\partial C_t) $. By Lemma~\ref{lem_soulSuperlevelSets}, we have $ c(s)\in C_{t-r(s)} $. 
	Since this geodesic ends in $ C_{a} $, there exists $ s_0>0 $ with $ r(s_0)=t-a $. By definition, we also have $ r(0)=0 $. The function $ r $ is, by Proposition~\ref{prop_distConcave}, concave and thus lies not below the linear function $ s\mapsto \frac{t-a}{s_0} s $ for $ s\in [0,s_0] $. Setting $ H:=\frac{t-a}{s_0} $, we obtain $ r(s)\geq Hs $ for $ s\in[0,s_0] $ and hence $ c(s)\in C_{t-r(s)} \subseteq C_{t-Hs} $.
	We observe $$ f(s)\geq \inj(c(s))\geq \inj(C_{t-Hs}) = g(t-Hs) $$ and conclude with (\ref{eq_fVariation}) that
	\begin{align*}
		\limsup_{h\searrow 0} \frac{g(t)-g(t-h)}{h}&=\limsup_{h\searrow 0} \frac{f(0)-g(t-h)}{h}\\
		&= \limsup_{h\searrow 0} \frac{f(0)-g(t-Hh)}{Hh}\\
		&\geq \limsup_{h\searrow 0} \frac{f(0)-f(h)}{Hh}\\
		&= -\frac{1}{H}\limsup_{h\searrow 0} \frac{f(h)-f(0)}{h}= 0.
	\end{align*}
	Since $ g $ is non\-increasing, we have $ g^-(t)=0 $.
\end{proof}
The following Lemma~will give a continuity property that, together with the preceding Lemma, will ensure that $ g $ is constant whenever it is below $ D_K $.
\begin{lem}\label{lem_gClosed}
	Let $ g:I\to [0,\infty] $ as defined in (\ref{eq_g}). Then the map $$  \bar{g}:I\to [0,D_K],\quad t\mapsto \min \{D_K, g(t)\}  $$ is continuous.
	%$ t\in I $ and $ (t_i)_{i\in\IN}\subseteq I $ be a sequence with limit $ t $. If $ g(t_i)<D_K $ for all $ i\in\IN $, then $ \liminf\limits_{i\to\IN}g(t_i) \geq g(t) $. 
\end{lem}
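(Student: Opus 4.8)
The plan is to replace $g$ throughout by the continuous data out of which it is built. Put $h:X_l\to[0,D_K]$, $h(p):=\min\{D_K,\inj(p)\}$; this is continuous by Corollary~\ref{cor_injCont}, since $X_l$ is a closed convex subset of $X$ and hence a finite-dimensional (in particular locally compact) Alexandrov space of curvature $\leq K$. From the elementary identity $\inf_{p}\min\{c,\varphi(p)\}=\min\{c,\inf_{p}\varphi(p)\}$ (valid in $[0,\infty]$) one obtains
\[
	\bar g(t)=\min\Bigl\{D_K,\ \inf_{p\in C_t}\inj(p)\Bigr\}=\inf_{p\in C_t}h(p),
\]
and since $C_t$ is nonempty and compact (Lemma~\ref{lem_soulSuperlevelSets}) while $h$ is continuous, this infimum is attained: $\bar g(t)=h(q_t)$ for some $q_t\in C_t$. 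So it suffices to show that $t\mapsto\min_{p\in C_t}h(p)$ is continuous, knowing that $t\mapsto C_t$ is continuous in the Hausdorff distance (Lemma~\ref{lem_soulSuperlevelSets}~\ref{lem_CtCont}).

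I would then fix $t\in I$ and a sequence $(t_i)$ in $I$ with $t_i\to t$, and prove upper and lower semicontinuity of $\bar g$ at $t$ separately. For the upper estimate, pick $q\in C_t$ with $h(q)=\bar g(t)$; Hausdorff convergence $C_{t_i}\to C_t$ yields points $q_i\in C_{t_i}$ with $q_i\to q$, whence $\bar g(t_i)\leq h(q_i)\to h(q)=\bar g(t)$ by continuity of $h$, so $\limsup_{i\to\infty}\bar g(t_i)\leq\bar g(t)$. For the lower estimate, suppose it fails; after passing to a subsequence we may assume $\bar g(t_i)\to\ell<\bar g(t)$. Choose $q_i\in C_{t_i}$ with $h(q_i)=\bar g(t_i)$. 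The sets $C_{t_i}$ eventually lie in a fixed bounded neighborhood of the compact set $C_t$, and $X_l$ is proper (Lemma~\ref{lem_factsAS}~\ref{lem_hopfRinow}), so a subsequence of $(q_i)$ converges to some $q$. Hausdorff convergence then forces $d(q,C_t)=0$, hence $q\in C_t$ as $C_t$ is closed, so $h(q)\geq\bar g(t)$; but $h(q)=\lim_{i\to\infty}h(q_i)=\ell<\bar g(t)$ by continuity of $h$, a contradiction. Hence $\liminf_{i\to\infty}\bar g(t_i)\geq\bar g(t)$, and together with the upper estimate this gives continuity of $\bar g$ at $t$.

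I do not expect a genuine obstacle here: the analytic content is already carried by Corollary~\ref{cor_injCont} and by the Hausdorff continuity of $t\mapsto C_t$, and what remains is the routine fact that the minimum of a continuous function over a Hausdorff-continuous family of nonempty compact sets varies continuously with the parameter. The one point worth flagging is why one must truncate at $D_K$ instead of working with $g$ directly: $g$ can legitimately equal $+\infty$ (for instance on a flat piece such as $X_l=\IR^n$, or when $C_t$ degenerates to a point), so passing to the bounded continuous surrogate $h$ via the displayed identity is exactly what makes the compactness arguments above run without fuss.
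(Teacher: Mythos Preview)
Your proof is correct and follows essentially the same approach as the paper: reduce to the continuous truncation $h(p)=\min\{D_K,\inj(p)\}$, observe $\bar g(t)=\min_{p\in C_t}h(p)$, and then use Hausdorff continuity of $t\mapsto C_t$ to conclude. The only difference is organizational: the paper factors $\bar g$ as a composition $I\to\mathcal{K}(X_l)\to\mathcal{K}(\IR)\to\IR$ and defers the two Hausdorff-distance continuity facts to separate appendix lemmas, whereas you inline the semicontinuity argument directly.
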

\begin{proof}
	Recall that Corollary~\ref{cor_injCont} showed that
	\begin{equation*}
		f:X_l\to [0,D_K],\ p\mapsto \min\{D_K,\inj_{X_l}(p)\}
	\end{equation*}
	is continuous. Note that here $ D_K<\infty $ since $ K>0 $. For a metric space $ Y $, we denote by $ \mathcal{K}(Y) $ the set of all non\-empty compact subsets of $ Y $ endowed with the Hausdorff metric. We split $ \bar{g} $ into the three following functions,
	\begin{align*}
		&g_1:I\to \mathcal{K}(X_l),\quad t \mapsto C_t, \\
		&g_2:\mathcal{K}(X_l) \to \mathcal{K}(\IR), \quad A \mapsto f(A), \\
		&g_3:\mathcal{K}(\IR) \to \IR, \quad B \mapsto \min B,
	\end{align*}
	and obtain $ \bar{g}=g_3\circ g_2\circ g_1 $. We changed the order in which the minimum is taken but the equality still holds. The first map, $ g_1 $, is continuous by Lemma~\ref{lem_soulSuperlevelSets}~\ref{lem_CtCont}. The second and the third map, $ g_2 $ and $ g_3 $, are continuous by arguments about the Hausdorff distance as shown in Lemma~\ref{lem_HausdorffImagemapCont} and Lemma~\ref{lem_HausdorffMinContinuous}. Hence, their concatenation $ \bar{g} $ is continuous.
\end{proof}

\begin{proof}[Proof of Theorem~\ref{thm_injSoul}]
	We construct the soul according to the proof of Proposition~\ref{prop_soul}. We obtain finitely many $ (X_l)_{l=0,\dots,n} $ with $ X_0=X $ and the soul $ S=X_n $. We prove by induction that $ \inj_{X_n}(X_n)=\inj(X) $. 
	Fix $ l\in\{0,\dots,n-1\} $ and assume $ \inj_{X_l}(X_l)=\inj_X(X) $. We show $ \inj_{X_l}(X_{l+1})=\inj_{X_l}(X_l) $. Corollary~\ref{cor_injSubsets} then implies $$ \inj_{X_{l+1}}(X_{l+1}) = \inj_{X_l}(X_{l+1}) = \inj_{X_l}(X_l) = \inj_X(X) $$ which completes the induction. We omit the index $ X_l $ in the following.
	
	Consider the sets $ \{C_t\}_{t\in I} $ between $ X_{l+1} $ and $ X_l $, obtained in Lemma~\ref{lem_soulSuperlevelSets}. Let $ g:I\to[0,\infty] $ as defined in (\ref{eq_g}).
	By assumption, we have $ \inj(X_l)=\inj(X)<D_K $ and thus, in case $ I=[a,0] $, set $ b:=0 $ and observe $ g(b)=\inj(X_l)<D_K $. In case $ I=[a,\infty) $, we use the fact that $ \bigcup_{t\in I}C_t = X_l $ and obtain $ \lim_{t\to\infty}g(t) = \inj(X_l)<D_K $ and thus there exists some arbitrarily large $ b\in I $ such that $ g(b)<D_K $. We show that $ g $ is constant on $ [a,b] $ and thus on $ I $. We obtain $ \inj(X_{l+1})=g(a) = g(b) =\inj(X_l) $, resp.\ $ \inj(X_{l+1})=g(a) = \lim_{t\to\infty}g(t) = \inj(X_l) $ which completes the proof.
	
	We define the level set $ J:=\{t\in [a,b]: g(t) = g(b)\} $.
	Since $ g(b)<D_K $, it agrees with the level set of the continuous function $ t\mapsto \min\{D_K,g(t)\} $ (Lemma~\ref{lem_gClosed}) and is thus closed.
	Assume $ J $ is not open, then there exists a sequence $ (t_i)_{i\in\IN} $ with limit $ t\in J $ and $ g(t_i)>g(b)=g(t) $. Since $ g $ is non\-increasing, we conclude $ t_i<t $. By Lemma~\ref{lem_gClosed}, there exists $ \delta>0 $ such that $ g $ is continuous on $ (t-\delta,t+\delta) $. Let $ n\in\IN $ be sufficiently large such that $ t-t_n<\delta $. Then the mean value theorem~\ref{prop_mvt} yields for the continuous function $ g|_{[t_n,t]} $ some point $ s\in (t_n,t] $ such that 
	$$ g^-(s)\leq \frac{g(t)-g(t_n)}{t-t_n} < 0. $$ But Lemma~\ref{lem_gLeftDerivativeZero} yields $ g^-(s)=0 $, a contradiction.
	In conclusion, $ J $ is a non\-empty, open and closed subset of the connected set $ [a,b] $ which implies that $ J = [a,b] $ and consequently $ g(t)=g(b) $ for all $ t\in [a,b]$.
\end{proof}

\begin{rem}\label{rem_alternativeProof}
In \cite{Sh76}, where Theorem~\ref{thm_injSoul} for Riemannian manifolds was initially stated, a proof was given using Sharafutdinov's retraction. A very similar approach also translates to Alexandrov spaces very elegantly and was kindly brought to our attention by Alexander Lytchak. For the sake of completeness we give a short outline of the idea.
As in the proof above, the Klingenberg Lemma~\ref{lem_Klingenberg} allows us to control the injectivity radius by controlling the length of closed local geodesics.
Suppose there exists a closed local geodesic $ \gamma $ which is not contained in the soul $ S $ but has length $ 2\inj(X) < 2\inj(S) $. We split it into two geodesics $ \gamma_1 $ and $ \gamma_2 $ of equal length and coinciding endpoints and let them ``flow'' into the soul under \v{S}arafutdinov's retraction $ \text{\v{S}}:=F(\cdot,1) $ (see Proposition~\ref{prop_sarafutdoinovsRetraction}) to obtain the curves $ \text{\v{S}} \circ \gamma_1  $ and $ \text{\v{S}} \circ \gamma_2 $ lying in the soul. By the non-expanding property of the deformation, their lengths cannot have increased. Applying a curve shortening flow to $ \text{\v{S}} \circ \gamma_1 $ and $ \text{\v{S}} \circ \gamma_2 $, we obtain two local geodesics $ \sigma_1 $ and $ \sigma_2 $ joining the same points. This can be done by replacing increasing parts of the curve with geodesics which vary continuously with their endpoints since they have length smaller than the injectivity radius (see Remark~\ref{rem_contGeodBelowInj}).
If $ \sigma_1 $ and $ \sigma_2 $ are distinct, their lengths give an upper bound for the injectivity radius and yield a contradiction.
Suppose otherwise that they coincide. Then $ \gamma $ is contractible and a null homotopy is given by \v{S}arafutdinov's deformation in concatenation with the curve shortening flow and the canonical null homotopy 
%\[H:[0,2]\times[0,1]\to X,\quad (t,\lambda)\mapsto \begin{cases}
%	\sigma_1((1-\lambda)t)\quad &\text{ if } t\in[0,1],\\
%	\sigma_1((1-\lambda)(2-t))\quad &\text{ if } t\in(1,2],\\
%\end{cases} \]
that contracts the obtained curve (see Figure~\ref{fig_alternativeProofSarafutdinov}). 
\begin{figure}[h]
	\centering
	\includegraphics[width=0.9\linewidth]{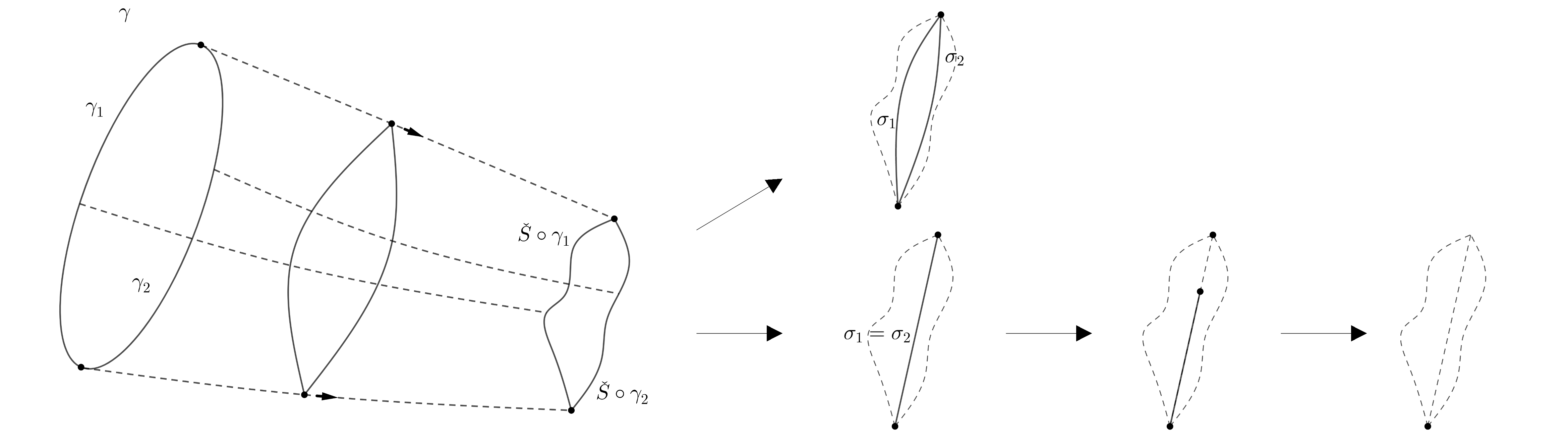}
	\caption{A sketch of the null homotopy of $ \gamma $}\label{fig_alternativeProofSarafutdinov}
\end{figure}
The lengths of all curves of this null homotopy have lengths not larger than that of $ \gamma $ which, by assumption, is smaller than $ 2 D_K $. We obtain the desired contradiction from \cite[Theorem 12.1]{ShSo} which states that such a null homotopy must contain a curve of length at least $ 2 D_K $.
\end{rem}
The proof given for Theorem~\ref{thm_injSoul} and the proof from Remark~\ref{rem_alternativeProof} provide different insight into the matter. The former is independent of \v{S}arafutdinov's retraction and its related gradient flow theory. It connects in an elementary manner to the soul construction and common tools of Alexandrov spaces. The latter exploits the non-expansiveness of the gradient flow in an elegant way. It is independent of continuity of the injectivity radius or the exact description of the superlevel sets of the soul construction.

%%%%%%%%%%%%%%%%%%%%%%%%%%%%%%%%%%%%%%%%%%%%%%%%%%%%%%%%%%%%%%%%%%%
\appendix
\section{Appendix}\label{sec_App}
%%%%%%%%%%%%%%%%%%%%%%%%%%%%%%%%%%%%%%%%%%%%%%%%%%%%%%%%%%%%%%%%%%%
\subsection{Hausdorff Distance}
\begin{defn}
	Let $ X $ be a metric space. We denote by $ \mathcal{K}(X) $ the set of all nonempty compact subsets of $ X $. Then the \textit{Hausdorff distance}\index{Hausdorff distance}\index{Metric!Hausdorff}
	\begin{equation*}
		d:\mathcal{K}(X)\times\mathcal{K}(X)\to [0,\infty),\quad (A,B)\mapsto \max\{\sup\limits_{a\in A} d(a,B), \sup\limits_{b\in B} d(A,b) \}
	\end{equation*}
	defines a metric on $ \mathcal{K}(X) $.
\end{defn}
\begin{lem}\label{lem_HausdorffImagemapCont}
	Let $ X,Y $ be metric spaces and $ f:X\to Y $ be continuous. If $ f $ is uniformly continuous or $ X $ is proper, then the map $$ \mathcal{K}(X)\to\mathcal{K}(Y),\quad A\mapsto f(A) $$ is uniformly continuous.
\end{lem}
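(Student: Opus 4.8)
The plan is to base both cases on one elementary feature of the Hausdorff distance: if $A,B\in\mathcal K(X)$ have Hausdorff distance $d_H(A,B)<\delta$, then for each $a\in A$ the quantity $d(a,B)$ is attained, since $B$ is compact, so there is $b\in B$ with $d(a,b)\le\delta$, and symmetrically each $b\in B$ has such a partner in $A$. Combined with a modulus of continuity of $f$, this pairing turns Hausdorff-closeness of sets into Hausdorff-closeness of their images, and it is the only mechanism used.

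First I would treat the hypothesis that $f$ is uniformly continuous. Given $\varepsilon>0$, pick $\delta>0$ with $d(x,x')<\delta\Rightarrow d(f(x),f(x'))<\varepsilon$. If $d_H(A,B)<\delta$, then each $a\in A$ has a partner $b\in B$, whence $d(f(a),f(B))\le d(f(a),f(b))<\varepsilon$; taking the supremum over $a\in A$, together with the symmetric estimate for $b\in B$, bounds $d_H(f(A),f(B))$ by $\varepsilon$. So $A\mapsto f(A)$ is uniformly continuous, inheriting the same modulus of continuity as $f$. For the hypothesis that $X$ is proper, the modulus has to be produced locally, because a continuous $f$ on a proper space is uniformly continuous only on bounded subsets. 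Fix $x_0\in X$ and $R>1$; the closed ball $\overline{B}_R(x_0)$ is compact, so $f|_{\overline{B}_R(x_0)}$ is uniformly continuous, and running the previous argument inside this ball yields, for each $\varepsilon>0$, a $\delta=\delta(x_0,R,\varepsilon)\in(0,1)$ such that $d_H(f(A),f(B))\le\varepsilon$ whenever $A,B\subseteq\overline{B}_{R-1}(x_0)$ and $d_H(A,B)<\delta$; here one uses that a Hausdorff $\delta$-perturbation with $\delta<1$ of a subset of $\overline{B}_{R-1}(x_0)$ still lies inside $\overline{B}_R(x_0)$, so the pairing partners stay in the ball and the estimate transfers unchanged. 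Since every compactum, and more generally every Hausdorff-bounded family of compacta, sits inside some such $\overline{B}_{R-1}(x_0)$, this gives the uniform estimate on each bounded part of $\mathcal K(X)$ and hence continuity of $A\mapsto f(A)$ at every point; and when $X$ is bounded, so that properness forces $X$ to be compact, the single choice $\overline{B}_R(x_0)=X$ serves all pairs at once and one obtains global uniform continuity.

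I expect the main obstacle to be precisely this transfer in the proper case. A single global modulus for $A\mapsto f(A)$ cannot be read off directly from the merely continuous $f$, so one must work ball by ball and verify that Hausdorff-$\delta$ neighbourhoods of the compacta in play never escape the ball on which $f$'s modulus was obtained. Once that bookkeeping is in place, the uniformly-continuous-$f$ estimate from the first case applies verbatim inside each ball, and the two hypotheses combine to give the stated conclusion.
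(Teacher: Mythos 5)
Your argument is correct and follows essentially the same route as the paper's proof: in the uniformly continuous case the modulus of $f$ transfers directly, and in the proper case one restricts $f$ to a compact enlargement of the sets in play (you use a closed ball $\overline{B}_R(x_0)$, the paper uses the $1$-neighborhood $A'=\{p:d(p,A)\leq 1\}$ of $A$ — the same device) to obtain a local modulus, then pairs points via the Hausdorff distance. Your honest remark that the proper case only yields continuity (with $\delta$ depending on the compactum) rather than a global modulus is accurate and matches what the paper's own proof actually establishes, which is all that is needed for its application.
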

\begin{proof}
	Let $ \varepsilon>0 $ and $ A\in\mathcal{K}(X) $. If $ f $ is not uniformly continuous, consider the set $ A':=\{p\in X: d(p,A)\leq 1\} $ which is bounded and closed and thus compact, otherwise set $ A':=X. $ The continuous function $ f $ is uniformly continuous when restricted to the compact set $ A' $. Hence, there exists a $ \delta>0 $ such that all $ a,b\in A' $ with $ d(a,b)<\delta $ satisfy $ d(f(a),f(b))<\eps $. If $ \delta>1 $, we choose $ \delta:=1 $. Fix $ B\in \mathcal{K}(X) $ with $ d_H(A,B)<\delta $. Note that $ B\subseteq A' $.
	Let $ a\in A $, then $ d(a,B)<\delta $ i.e.\ there exists $ b\in B $ such that $ d(a,b)<\delta $ and hence $$ d(f(a),f(B))\leq d(f(a),f(b))<\varepsilon. $$ This holds for all $ a\in A $. We obtain the analogous statement $ d(f(A),b)<\eps $ for all $ b\in B $ and obtain finally
	\begin{equation*}
		d_H(f(A),f(B)) =\max\left\{\sup\limits_{a\in A} d(f(a),f(B))\ ,\ \sup\limits_{b\in B} d(f(A),f(b)) \right\} <\varepsilon. \qedhere
	\end{equation*}
\end{proof}

\begin{lem}\label{lem_HausdorffMinContinuous}
	The map $ \mathcal{K}(\IR) \to \IR, \ A\mapsto \min A $ is continuous.
\end{lem}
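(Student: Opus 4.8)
\begin{proof}
The plan is to show that the map is $1$\=/Lipschitz with respect to the Hausdorff metric, which of course implies continuity.

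Fix $A,B\in\mathcal{K}(\IR)$. Since $A$ is nonempty and compact, the minimum $a:=\min A$ exists, and likewise $b:=\min B$ exists. By the definition of the Hausdorff distance, $d(a,B)\leq \sup_{a'\in A}d(a',B)\leq d_H(A,B)$, and since $B$ is compact this distance is realized by some $b'\in B$, so that $|a-b'|\leq d_H(A,B)$. Consequently
\begin{equation*}
	\min B \leq b' \leq a + d_H(A,B) = \min A + d_H(A,B).
\end{equation*}
Interchanging the roles of $A$ and $B$ yields $\min A \leq \min B + d_H(A,B)$. Combining the two inequalities gives $|\min A - \min B| \leq d_H(A,B)$, so the map $A\mapsto \min A$ is $1$\=/Lipschitz, hence continuous.
\end{proof}

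There is no real obstacle here: the statement is an immediate formal consequence of the definition of the Hausdorff distance, the only point worth noting being that compactness of the sets in $\mathcal{K}(\IR)$ guarantees both that the minimum is attained and that the distance $d(a,B)$ is realized by an actual point of $B$.
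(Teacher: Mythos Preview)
Your proof is correct and follows essentially the same approach as the paper: both show the map is $1$\=/Lipschitz by estimating $d(\min A,B)\leq d_H(A,B)$ and deducing $|\min A-\min B|\leq d_H(A,B)$. The only cosmetic difference is that the paper assumes without loss of generality $\min A\leq\min B$ and observes directly that $d(\min A,B)=|\min A-\min B|$, whereas you pick a realizing point $b'\in B$ and symmetrize.
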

\begin{proof}
	Let $ A,B\in \mathcal{K}(\IR) $ and without loss of generality $ \min A \leq \min B $. The distance of $ \min A $ to the set $ B $ is then realized by $ d(\min A , \min B) $ and consequently
	\begin{align*}
		d_H(A,B)&=\max\{\sup\limits_{a\in A} d(a,B), \sup\limits_{b\in B} d(A,b) \}\\
		&\geq \sup\limits_{a\in A} d(a,B) \geq d(\min A,B) = d(\min A , \min B),
	\end{align*} 
	which gives continuity of the map $ A\mapsto \min A $.
\end{proof}

\subsection{Real Functions with One-Sided Derivative}
\begin{defn}\label{def_onesdiedDerivative}
	Let $ g:[a,b]\to \IR $ be a function. The \textit{left derivative}\index{Derivative!left} in $ x\in(a,b] $ is defined to be the limit
	\begin{equation*}
		g^-(x):=\lim_{h\searrow 0}\frac{g(t)-g(t-h)}{h}
	\end{equation*}
	if it exists. For a function $ g:A\to\IR $ defined on a subset $ A\subseteq\IR $ we define the left derivative in $ x\in A $ if there is some $ \varepsilon>0 $ such that $ (x-\varepsilon,x]\subseteq A $ to be $ (g|_{[x-\varepsilon,x]})^-(x) $.
	
	The \textit{right derivative}\index{Derivative!right} in $ x\in[a,b) $ is defined to be the limit
	\begin{equation*}
		g^+(x):=\lim_{h\searrow 0}\frac{g(t+h)-g(t)}{h}
	\end{equation*}
	if it exists. For a function $ g:A\to\IR $ defined on a subset $ A\subseteq\IR $, we define the right derivative in $ x\in A $, if there is some $ \varepsilon>0 $ such that $ [x,x+\varepsilon]\subseteq A $, to be $ (g|_{[x,x+\varepsilon]})^-(x) $.
	
\end{defn}
The following is a generalization of the Mean Value Theorem for differentiable functions to functions with one-sided derivatives.
\begin{prop}[Mean Value Theorem]\label{prop_mvt}
	Let $ g:[a,b]\to \IR $ be a continuous function. If the left derivative $ g^- $ exists in $ (a,b] $, then there exist points $ c,d\in(a,b] $ such that
	$$ g^-(c) \leq \frac{g(b)-g(a)}{b-a} \leq g^-(d). $$
	If the right derivative $ g^+ $ exists in $ [a,b) $, then there exist points $ c,d\in[a,b) $ such that
	$$ g^+(c) \leq \frac{g(b)-g(a)}{b-a} \leq g^+(d). $$
\end{prop}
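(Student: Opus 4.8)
The plan is to reduce the Mean Value Theorem for one-sided derivatives to the classical Mean Value Theorem (or rather, to its consequence for continuous functions: a continuous function attains its extrema on a compact interval), by a standard trick using the auxiliary function $h(x) := g(x) - \frac{g(b)-g(a)}{b-a}(x-a)$. This function is continuous on $[a,b]$ and satisfies $h(a) = h(b) = g(a)$. Note that $h$ has the same one-sided-derivative existence as $g$, and $h^-(x) = g^-(x) - \frac{g(b)-g(a)}{b-a}$ wherever $g^-(x)$ exists (and analogously for $h^+$). So it suffices to prove: if $h:[a,b]\to\IR$ is continuous with $h(a)=h(b)$ and $h^-$ exists on $(a,b]$, then there are $c,d\in(a,b]$ with $h^-(c)\le 0\le h^-(d)$; the statement for $h^+$ is symmetric.

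First I would handle the left-derivative case. Since $h$ is continuous on the compact interval $[a,b]$, it attains a maximum at some point $x_{\max}\in[a,b]$ and a minimum at some $x_{\min}\in[a,b]$. For the upper bound $h^-(d)\ge 0$: pick $d$ to be a point where $h$ attains its maximum; if the maximum is attained only at $a$, then since $h(a)=h(b)$ it is also attained at $b$, so we may take $d=b\in(a,b]$; otherwise take $d=x_{\max}\in(a,b]$. Then for small $h_0>0$ the difference quotient $\frac{h(d)-h(d-h_0)}{h_0}\ge 0$ because $h(d)\ge h(d-h_0)$, and passing to the limit gives $h^-(d)\ge 0$. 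For the lower bound $h^-(c)\le 0$: pick $c$ where $h$ attains its minimum, again shifting to $b$ if the minimum is attained only at $a$; then $h(c)\le h(c-h_0)$ for small $h_0>0$, so $\frac{h(c)-h(c-h_0)}{h_0}\le 0$ and $h^-(c)\le 0$. This completes the left-derivative case after adding back $\frac{g(b)-g(a)}{b-a}$.

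The right-derivative case is entirely symmetric, using $c,d\in[a,b)$: at a maximum point (shifted to $a$ if the max is attained only at $b$) we get $\frac{h(x+h_0)-h(x)}{h_0}\le 0$, hence $h^+(d)\le 0$ there — wait, this gives the wrong sign, so more carefully: at a maximum point $d$ we have $h(d+h_0)\le h(d)$ giving $h^+(d)\le 0$, which is what we want for the \emph{upper} bound on $g^+$ since adding $\frac{g(b)-g(a)}{b-a}$ yields $g^+(d)\le \frac{g(b)-g(a)}{b-a} + 0$; and at a minimum point $c$ we have $h(c+h_0)\ge h(c)$ giving $h^+(c)\ge 0$, so $g^+(c)\ge\frac{g(b)-g(a)}{b-a}$. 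Thus the roles of $c$ and $d$ are exchanged relative to the left-derivative case, but the asserted inequality $g^+(c)\le\frac{g(b)-g(a)}{b-a}\le g^+(d)$ still holds with $c$ a minimum point and $d$ a maximum point.

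The only mild subtlety — the main obstacle, such as it is — is the bookkeeping at the endpoints: one must ensure the chosen extremal point lies in the half-open interval on which the one-sided derivative is defined, which is exactly why the hypothesis $h(a)=h(b)$ is invoked to relocate an extremum from a forbidden endpoint to the allowed one. No curvature or Alexandrov-space input is needed; this is a purely real-variable lemma.
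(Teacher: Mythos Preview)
Your approach is essentially identical to the paper's: subtract the linear function to reduce to the Rolle-type case $h(a)=h(b)$, then take extrema of $h$ on $[a,b]$ and, if an extremum falls on the forbidden endpoint, relocate it to the allowed one using $h(a)=h(b)$. One minor bookkeeping slip at the end of your right-derivative paragraph: the correct assignment is $c$ at a \emph{maximum} of $h$ (so $h^+(c)\le 0$, hence $g^+(c)\le\frac{g(b)-g(a)}{b-a}$) and $d$ at a \emph{minimum} (so $h^+(d)\ge 0$), the reverse of what you wrote in your final sentence.
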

\begin{proof}
	We first prove a generalization of Rolle's Theorem where we consider a continuous function $ f :[a,b]\to \IR $ which admits left derivatives and satisfies $ f(a)=f(b)=0 $. The continuous function $ f $ assumes its maximum and minimum on the compact interval $ [a,b] $ at some points $ x_{min},x_{max}\in [a,b] $, respectively. We set $ c:=x_{max} $ and $ d:=x_{min} $. If $ x_{max}=a $, then both $ f(a)=f(b)=0 $ are maximal values of $ f $ and hence we reset $ x_{max}:=b $. Analogously, if $ x_{min}=a $, we reset $ x_{min}:=b $. From now on we can assume $ x_{min},x_{max}\in(a,b] $. Thus, we have $ f^-(x_{min})\leq 0 $ and $ f^-(x_{max})\geq 0 $ by definition of the one-sided difference quotient. We obtained $ c,d\in (a,b] $ that satisfy
	\begin{equation}\label{eq_mvt}
		f^-(c) \leq 0 \leq f^-(d).
	\end{equation}
	Now, for the general continuous function $ g $, set $$ f:[a,b]\to\IR,\quad t\mapsto g(t)-\frac{g(b)-g(a)}{b-a}(t-a) - g(a) $$ and observe $$ f^-(t)=g^-(t)-\frac{g(b)-g(a)}{b-a} $$ for $ t\in(a,b] $. Furthermore, $ f $ satisfies $ f(a)=f(b)=0 $ and applying (\ref{eq_mvt}) to $ f $ yields points $ c,d\in(a,b] $ such that
	\begin{equation*}
		g^-(c) =  f^-(c) + \frac{g(b)-g(a)}{b-a} \leq \frac{g(b)-g(a)}{b-a} \leq f^-(d) + \frac{g(b)-g(a)}{b-a} = g^-(d).
	\end{equation*}
	The proof for $ g^+ $ works analogously with the according changes of sign and interval bounds.
\end{proof}

\subsection{Klingenberg's Injectivity Radius Estimate}\label{sec_AppKlingenberg}
We give an alternate proof of \cite[Theorem~8.3]{ShSo} in view of incorrect arguments in the proof of \cite[Lemma~8.1]{ShSo} which was solely used to prove \cite[Theorem~8.3]{ShSo}. %There, the metric triangle inequality is used to obtain an estimate on the length of local geodesics which only works for (minimizing) geodesics.

Let us first note the distinct definitions of conjugate points defined in Section~\ref{sec_Inj} and in \cite{ShSo}. We indicate the notions as defined in the latter by putting them into quotation marks. In \cite{ShSo} a stronger notion of convergence than induced by the uniform metric $ \rho $ is used. But under the assumptions of Proposition~\ref{prop_ShSo8.3}, they are equivalent by \cite[Lemma~1.11]{ShSo} and each ``one-sided conjugate point'' is a conjugate point. 
A counterexample to the converse is given by the following. It also provides an affirmative answer to \cite[Open~Question~4.5]{ShSo}.
\begin{ex}{(\cite{AB}).}
	Consider the closed hemisphere $ X\subseteq \IS^2 $. This is a closed convex subset of a Riemannian manifold of constant curvature $ 1 $ and thus itself a locally compact Alexandrov space of curvature $ \geq 1 $ and $ \leq 1 $. First, observe that all local geodesics of length larger than $ \pi $ are contained in the boundary circle $ \partial X $. Now, consider such a local geodesic $ \gamma:[0,1] \to X $. Any sequence of local geodesics converging uniformly to $ \gamma $ must eventually have lengths larger than $ \pi $ (Lemma~\ref{lem_factsAS}~\ref{lem_lengthContLocGeod}) and thus are eventually contained in $ \partial X $. We obtain the following consequences. Along $ \gamma $ the endpoint $ \gamma(1) $ is a conjugate point to $ \gamma(0) $ (in the sense of Definition~\ref{def_conj}) since a small open neighborhood of $ \gamma(0) $ is mapped onto a circle segment contained in the boundary circle, i.e.\ not an open neighborhood of $ \gamma(1) $. Furthermore, $ \gamma(0) $ and $ \gamma(1) $ are ``unreachable conjugate'' but not ``symmetric conjugate'' along $ \gamma $ (hence also not ``one-sided conjugate'').
\end{ex}
Lemma~\ref{lem_Klingenberg} is a version of \cite[Lemma~8.1]{ShSo} for the injectivity radius instead of the minimal radius and a different notion of conjugate points. \cite[Theorem~8.3]{ShSo} can be recovered using that the injectivity radius coincides with the minimal radius in this situation.
\begin{prop}[\text{\cite[Theorem~8.3]{ShSo}}]\label{prop_ShSo8.3}
	If $ X $ is a compact length space with a positive and finite minimal radius, then either there exists $ p $ and $ q $ which are ultimate conjugate and $ d(p,q)\leq \MinRad(X) $ or there is a closed local geodesic $ \gamma:\IS^1\to X $ which has length twice the minimal radius.
\end{prop}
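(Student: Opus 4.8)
The plan is to run the minimal-radius analogue of the Klingenberg Lemma~\ref{lem_Klingenberg} along a sequence of points whose minimal radii decrease to $\MinRad(X)$, and then to pass to the limit to land in one of the two alternatives. Here the \emph{minimal-radius version} of Lemma~\ref{lem_Klingenberg} is proved exactly as Lemma~\ref{lem_Klingenberg}, replacing $\inj$ by $\MinRad$ throughout and ``$q$ is conjugate to $p$'' by ``some point within distance $\MinRad(p)$ of $p$ is conjugate to $p$'' (one uses Lemma~\ref{lem_cutLocClosed} and Lemma~\ref{lem_MinCutminCutConj} to reduce to the case $q\in\Cut(p)$, where the loop construction of Lemma~\ref{lem_Klingenberg} applies). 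Since a compact metric space is complete and locally compact, hence proper, set $m:=\MinRad(X)\in(0,\infty)$, choose $p_i\in X$ with $\MinRad(p_i)\to m$, and pick $q_i\in\overline{\MinCut(p_i)}$ with $d(p_i,q_i)=\MinRad(p_i)>0$; passing to subsequences, $p_i\to p$ and $q_i\to q$, so $d(p,q)=m$. (One works with a sequence rather than a single realizing point because $\MinRad$ need not be lower semicontinuous; equivalently one could run the scheme for $\inj$ once it is known that the two radii agree here, with the same structure.)

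For each $i$ the minimal-radius Klingenberg dichotomy yields: either (a) there is a point conjugate to $p_i$ at distance at most $\MinRad(p_i)$ from $p_i$, or (b) $q_i$ is the midpoint of a local geodesic loop based at $p_i$ of length $2\MinRad(p_i)$. After passing once more to a subsequence, the same alternative holds for all $i$; since for each index one of (a), (b) holds, one of the two cases below necessarily occurs.

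Suppose (b) holds for all $i$. The loops lie in the compact space $X$ and their lengths converge to $2m$. Cutting the parameter circle of the loop into finitely many arcs on each of which every loop (for large $i$) has length less than $m$, the restriction of each loop to such an arc is a genuine minimizing geodesic: a local geodesic of length $<m\le\MinRad(\cdot)$ issuing from any point cannot reach that point's minimal cut locus, so it is minimizing. Lemma~\ref{lem_factsAS}~\ref{lem_geodLimit}, applied on each arc and iterated over the finitely many arcs, gives a subsequence converging uniformly to a curve $\gamma$ with $\gamma(0)=\gamma(1)=p$ that is a local geodesic of length exactly $2m$ (the length is the limit of $2\MinRad(p_i)$ because the pieces are geodesics with converging endpoints). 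A short computation with the two half-branches of $\gamma$, as in the proof of Lemma~\ref{lem_Klingenberg}, shows $q\in\MinCut(p)$ and $d(p,q)=m$, so $p$ realizes the global minimal radius. The remaining point---that $\gamma$ in fact closes up to a closed local geodesic $\IS^1\to X$ rather than a geodesic loop with a corner at $p$---is where care is needed: a corner at $p$ would permit shortening near $p$ and exhibit a point of strictly smaller minimal radius, contradicting the minimality of $m$; this is handled as in \cite{ShSo}.

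Suppose (a) holds for all $i$, with $r_i$ conjugate to $p_i$ and $d(p_i,r_i)\le\MinRad(p_i)\to m$. Passing to a subsequence, $r_i\to r$ and $d(p,r)\le m=\MinRad(X)$, so it remains to check that $p$ and $r$ are ultimate conjugate in the sense of \cite{ShSo}. This is the main obstacle: by Definition~\ref{def_conj} our notion of conjugacy is a priori strictly weaker than ultimate conjugacy (the example recorded in this appendix gives a conjugate pair that is not even one-sided conjugate), so one must use that the conjugate points delivered by the Klingenberg dichotomy arise as limits of cut points---through Lemma~\ref{lem_cutLocClosed} and the construction inside the proof of Lemma~\ref{lem_Klingenberg}---and that under the hypotheses of Proposition~\ref{prop_ShSo8.3} the stronger mode of convergence used in \cite{ShSo} coincides with uniform convergence, by \cite[Lemma~1.11]{ShSo}. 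Tracking the two nested limits (the one internal to the Klingenberg argument and the one over $i$) through this identification places $(p,r)$ in the ultimate-conjugate locus and completes the proof.
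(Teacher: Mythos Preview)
Your approach via sequences and limits is structurally different from the paper's, and case~(a) contains a genuine gap that the paper's argument is specifically designed to avoid.

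In case~(a) you produce pairs $(p_i,r_i)$ that are conjugate in the sense of Definition~\ref{def_conj} with $d(p_i,r_i)\le\MinRad(p_i)$, and you need an \emph{ultimate} conjugate pair at distance $\le\MinRad(X)$. Even granting that each $(p_i,r_i)$ is ultimate conjugate (this uses \cite[Prop.~5.4]{ShSo}, not Lemma~1.11), the bound $d(p_i,r_i)\le\MinRad(p_i)$ only yields $\le\MinRad(X)$ in the limit, so you are forced to show that the limit pair $(p,r)$ is ultimate conjugate. Your ``tracking the two nested limits'' is not a proof: nothing here establishes that the ultimate-conjugate locus is closed, and the diagonal argument you gesture at is never carried out. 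The paper sidesteps this entirely by arguing contrapositively: it assumes the first alternative fails, which via \cite[Prop.~5.4]{ShSo} immediately gives $\conj(X)\ge\UltConj(X)>\MinRad(X)$. Then Corollaries~\ref{cor_ineqInjMinRad} and~\ref{cor_injMinRad} yield $\inj(X)=\MinRad(X)$, and Corollary~\ref{cor_injCont} (whose proof only needs the absence of conjugate points below the relevant threshold, here $\UltConj(X)$) gives continuity of $p\mapsto\min\{\inj(p),\UltConj(X)\}$, so a minimizing point $p$ exists directly by compactness---no sequences and no limit of conjugate pairs.

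Your case~(b) also has a soft spot. Deferring the corner-free property to ``as in \cite{ShSo}'' is awkward, since the stated purpose of this appendix is precisely to repair the proof in \cite{ShSo}; and your sketch ``a corner would permit shortening and exhibit a point of strictly smaller minimal radius'' does not explain how the shortened curves produce an element of $\MinCut$ (or $\Cut$) at smaller distance. The paper's argument is short and self-contained: once $p$ realizes $\inj(X)$, the loop through $q$ forces $\inj(q)\le\inj(p)=\inj(X)\le\inj(q)$, so $q$ also realizes $\inj(X)$; reapplying the proof of Lemma~\ref{lem_Klingenberg} from $q$'s perspective (using $d(q,p)=\inj(q)<\conj(X)$) shows the loop is locally minimizing at $p$ as well, hence a genuine closed local geodesic $\IS^1\to X$.
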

\begin{proof}
	$ X $ is, in particular, complete and locally compact (and thus geodesic). Therefore, the results in Section~\ref{sec_Inj} apply. Furthermore, it is ``locally uniformly minimizing'' enabling \cite[Proposition~5.4]{ShSo} by which all conjugate points (in the sense of Definition~\ref{def_conj}) are ``ultimate conjugate points'' and hence $ ``\UltConj(p)"\leq \conj(p) $ for each $ p\in X $.\\
	Assume all pairs of points that are ``ultimate conjugate'' are more than $ \MinRad(X) $ apart. That is, $ \conj(X)\geq ``\UltConj(X)">\MinRad(X) $ and therefore $ \inj(X)=\MinRad(X) $ by Corollaries~\ref{cor_ineqInjMinRad} and \ref{cor_injMinRad}. Furthermore, the absence of ``ultimate conjugate points'' is sufficient for Corollary~\ref{cor_injCont} (with $ D_K $ replaced by $ ``\UltConj(X)" $) which yields that the map $ p\mapsto \min\{\inj(p),``\UltConj(X)"\} $ is continuous and assumes a minimum on the compact space $ X $ in some point $ p\in X $. Again, by compactness of $ X $, there exists a point $ q\in \overline{\Cut(p)} $ with $ d(p,q)=\inj(p) $ and Lemma~\ref{lem_Klingenberg} yields a local geodesic $ \tilde{\gamma} $ from $ p $ to $ p $ through $ q $. We obtain $ \inj(q)\leq\inj(p)=\inj(X)\leq \inj(q) $ and thus equality, i.e.\ $ p $ realizes the injectivity radius of $ q $. Furthermore, it holds $ d(q,p)=\inj(q)<\conj(X) $ which allows us to conclude analogously to the proof of Lemma~\ref{lem_Klingenberg} that $ \tilde{\gamma} $  is minimizing at $ p $, that is, one can define a map $ \gamma:\IS^1\to X $ that is a linear reparametrization of an isometry. It has length 
	\begin{equation*}
		L(\gamma)=2\inj(p)=2\inj(X)=2\MinRad(X). \qedhere
	\end{equation*}  
\end{proof}
\com{Is the previous statement true if one replaces $ \MinRad $ by $ \inj $? Yes but careful explanation is needed of why those two notions agree below ultimate conjugate points even without locally uniformly minimizing property, that is, without cite[Prop.~5.4]{ShSo}.}

\phantomsection\addcontentsline{toc}{section}{References} 
%\bibliography{/home/seidel/HESSENBOX-DA/MyStuff/MyLit/000_bibs/injectivityRadiusSoul.bib}
\bibliography{d:/Jona/Documents/HESSENBOX-DA/MyStuff/MyLit/000_bibs/injectivityRadiusSoul.bib}	

\begin{thebibliography}{10}

\bibitem{ABConvex}
Stephanie Alexander and Richard~L. Bishop.
\newblock {$\mathscr{F}K$}-convex functions on metric spaces.
\newblock {\em Manuscripta Math.}, 110(1):115--133, 2003.

\bibitem{AKP_Found}
Stephanie Alexander, Vitali Kapovitch, and Anton Petrunin.
\newblock {\em Alexandrov geometry---foundations}, volume 236 of {\em Graduate
  Studies in Mathematics}.
\newblock American Mathematical Society, Providence, RI, [2024] \copyright
  2024.

\bibitem{AB}
Stephanie~B. Alexander and Richard~L. Bishop.
\newblock The {H}adamard-{C}artan theorem in locally convex metric spaces.
\newblock {\em Enseign. Math. (2)}, 36(3-4):309--320, 1990.

\bibitem{AB_GaussEqInjRadSubspaces}
Stephanie~B. Alexander and Richard~L. Bishop.
\newblock Gauss equation and injectivity radii for subspaces in spaces of
  curvature bounded above.
\newblock {\em Geom. Dedicata}, 117:65--84, 2006.

\bibitem{BN}
V.~N. Berestovskij and I.~G. Nikolaev.
\newblock Multidimensional generalized {R}iemannian spaces.
\newblock In {\em Geometry, {IV}}, volume~70 of {\em Encyclopaedia Math. Sci.},
  pages 165--243, 245--250. Springer, Berlin, 1993.

\bibitem{Ber}
V.~N. Berestovski\u{\i}.
\newblock Introduction of a {R}iemannian structure in certain metric spaces.
\newblock {\em Sibirsk. Mat. \v{Z}.}, 16(4):651--662, 883, 1975.

\bibitem{Panorama}
Marcel Berger.
\newblock {\em A panoramic view of {R}iemannian geometry}.
\newblock Springer-Verlag, Berlin, 2003.

\bibitem{Bow}
B.~H. Bowditch.
\newblock Notes on locally {${\textrm CAT}(1)$} spaces.
\newblock In {\em Geometric group theory ({C}olumbus, {OH}, 1992)}, volume~3 of
  {\em Ohio State Univ. Math. Res. Inst. Publ.}, pages 1--48. de Gruyter,
  Berlin, 1995.

\bibitem{BH}
Martin~R. Bridson and Andr\'{e} Haefliger.
\newblock {\em Metric spaces of non-positive curvature}, volume 319 of {\em
  Grundlehren der mathematischen Wissenschaften [Fundamental Principles of
  Mathematical Sciences]}.
\newblock Springer-Verlag, Berlin, 1999.

\bibitem{BBI}
Dmitri Burago, Yuri Burago, and Sergei Ivanov.
\newblock {\em A course in metric geometry}, volume~33 of {\em Graduate Studies
  in Mathematics}.
\newblock American Mathematical Society, Providence, RI, 2001.

\bibitem{BGP}
Yu~Burago, M~Gromov, and G~Perel{\textquotesingle}man.
\newblock A.d. {A}lexandrov spaces with curvature bounded below.
\newblock {\em Russian Mathematical Surveys}, 47(2):1--58, apr 1992.

\bibitem{CG}
Jeff Cheeger and Detlef Gromoll.
\newblock On the structure of complete manifolds of nonnegative curvature.
\newblock {\em Ann. of Math. (2)}, 96:413--443, 1972.

\bibitem{CK}
Christopher~B. Croke and Hermann Karcher.
\newblock Volumes of small balls on open manifolds: lower bounds and examples.
\newblock {\em Trans. Amer. Math. Soc.}, 309(2):753--762, 1988.

\bibitem{GPMaxRad}
Karsten Grove and Peter Petersen.
\newblock Alexandrov spaces with maximal radius.
\newblock {\em Geom. Topol.}, 26(4):1635--1668, 2022.

\bibitem{HS_OrientAS}
John Harvey and Catherine Searle.
\newblock Orientation and symmetries of {A}lexandrov spaces with applications
  in positive curvature.
\newblock {\em J. Geom. Anal.}, 27(2):1636--1666, 2017.

\bibitem{Hatcher}
Allen Hatcher.
\newblock {\em Algebraic topology}.
\newblock Cambridge University Press, Cambridge, 2002.

\bibitem{Kling1959}
W.~Klingenberg.
\newblock Contributions to {R}iemannian geometry in the large.
\newblock {\em Ann. of Math. (2)}, 69:654--666, 1959.

\bibitem{Kling}
Wilhelm Klingenberg.
\newblock {\em Riemannian geometry}, volume~1 of {\em de Gruyter Studies in
  Mathematics}.
\newblock Walter de Gruyter \& Co., Berlin-New York, 1982.

\bibitem{Lee}
John~M. Lee.
\newblock {\em Introduction to {R}iemannian manifolds}, volume 176 of {\em
  Graduate Texts in Mathematics}.
\newblock Springer, Cham, 2018.
\newblock Second edition of [ MR1468735].

\bibitem{Li}
Xueping Li.
\newblock Nonnegatively curved {A}lexandrov spaces with souls of codimension
  two.
\newblock {\em Trans. Amer. Math. Soc.}, 367(6):3901--3928, 2015.

\bibitem{L_OpenMap}
A.~Lytchak.
\newblock Open map theorem for metric spaces.
\newblock {\em Algebra i Analiz}, 17(3):139--159, 2005.

\bibitem{Mitsu}
Ayato Mitsuishi.
\newblock Orientability and fundamental classes of {A}lexandrov spaces with
  applications, 2016.
\newblock arXiv:1610.08024.

\bibitem{NiPe}
Constantin~P. Niculescu and Lars-Erik Persson.
\newblock {\em Convex functions and their applications}, volume~23 of {\em CMS
  Books in Mathematics/Ouvrages de Math\'{e}matiques de la SMC}.
\newblock Springer, New York, 2006.
\newblock A contemporary approach.

\bibitem{Ni_Overview}
I.~G. Nikolaev.
\newblock Bounded curvature closure of the set of compact {R}iemannian
  manifolds.
\newblock {\em Bull. Amer. Math. Soc. (N.S.)}, 24(1):171--177, 1991.

\bibitem{P2}
G.~Perelman.
\newblock Alexandrov spaces with curvatures bounded from below {I}{I}, 1991.
\newblock Retrieved from https://anton-petrunin.github.io/papers/.

\bibitem{P_Soul}
G.~Perelman.
\newblock Proof of the soul conjecture of {C}heeger and {G}romoll.
\newblock {\em J. Differential Geom.}, 40(1):209--212, 1994.

\bibitem{PP}
G.~Perelman and A.~Petrunin.
\newblock Quasigeodesics and gradient curves in alexandrov spaces, 2003.
\newblock Retrieved from https://anton-petrunin.github.io/papers/.

\bibitem{Pet}
Peter Petersen.
\newblock {\em Riemannian geometry}, volume 171 of {\em Graduate Texts in
  Mathematics}.
\newblock Springer, Cham, third edition, 2016.

\bibitem{Pet_AppQ}
Anton Petrunin.
\newblock Applications of quasigeodesics and gradient curves.
\newblock In {\em Comparison geometry ({B}erkeley, {CA}, 1993--94)}, volume~30
  of {\em Math. Sci. Res. Inst. Publ.}, pages 203--219. Cambridge Univ. Press,
  Cambridge, 1997.

\bibitem{PetrGradFlow}
Anton Petrunin.
\newblock Semiconcave functions in {A}lexandrov's geometry.
\newblock In {\em Surveys in differential geometry. {V}ol. {XI}}, volume~11 of
  {\em Surv. Differ. Geom.}, pages 137--201. Int. Press, Somerville, MA, 2007.

\bibitem{Pl92Boundary}
Conrad Plaut.
\newblock A metric characterization of manifolds with boundary.
\newblock {\em Compositio Math.}, 81(3):337--354, 1992.

\bibitem{Pl02}
Conrad Plaut.
\newblock Metric spaces of curvature {$\geq k$}.
\newblock In {\em Handbook of geometric topology}, pages 819--898.
  North-Holland, Amsterdam, 2002.

\bibitem{RW}
Xiaochun Rong and Yusheng Wang.
\newblock The soul conjecture in alexandrov geometry in dimension 4.
\newblock {\em Advances in Mathematics}, 404:108386, aug 2022.

\bibitem{ShSo}
K.~Shankar and C.~Sormani.
\newblock Conjugate points in length spaces.
\newblock {\em Adv. Math.}, 220(3):791--830, 2009.

\bibitem{ShioCompNoncomp}
Katsuhiro Shiohama.
\newblock Complete noncompact {A}lexandrov spaces of nonnegative curvature.
\newblock {\em Arch. Math. (Basel)}, 60(3):283--289, 1993.

\bibitem{Shio}
Katsuhiro Shiohama.
\newblock {\em An introduction to the geometry of {A}lexandrov spaces},
  volume~8 of {\em Lecture Notes Series}.
\newblock Seoul National University, Research Institute of Mathematics, Global
  Analysis Research Center, Seoul, 1993.

\bibitem{Spind}
Wolfgang Spindeler.
\newblock Nonnegatively curved quotient spaces with boundary.
\newblock {\em Bol. Soc. Mat. Mex. (3)}, 26(2):719--747, 2020.

\bibitem{S}
V.~A. \v{S}arafurdinov.
\newblock Complete open manifolds of nonnegative curvature.
\newblock {\em Sibirsk. Mat. \v{Z}.}, 15:177--191, 238, 1974.

\bibitem{Sh76}
V.~A. \v{S}arafutdinov.
\newblock Radius of injectivity of a complete open manifold of nonnegative
  curvature.
\newblock {\em Dokl. Akad. Nauk SSSR}, 231(1):46--48, 1976.

\bibitem{SConv}
V.~A. \v{S}arafutdinov.
\newblock Convex sets in a manifold of nonnegative curvature.
\newblock {\em Mat. Zametki}, 26(1):129--136, 159, 1979.

\bibitem{W}
Andreas Wörner.
\newblock A splitting theorem for nonnegatively curved {A}lexandrov spaces.
\newblock {\em Geom. Topol.}, 16(4):2391--2426, 2012.

\bibitem{Zam}
Tudor Zamfirescu.
\newblock Conjugate points and closed geodesic arcs on convex surfaces.
\newblock {\em Geom. Dedicata}, 62(1):99--105, 1996.

\bibitem{ZamCutLoci}
Tudor Zamfirescu.
\newblock On the cut locus in {A}lexandrov spaces and applications to convex
  surfaces.
\newblock {\em Pacific J. Math.}, 217(2):375--386, 2004.

\end{thebibliography}
\end{document}